\def\qmod#1#2{{\hbox{}^{\displaystyle{#1}}}\!\big/\!\hbox{}_{
\displaystyle{#2}}}
\def\Qmod#1#2{{\hbox{}^{\displaystyle{#1}}}\!\bigg/\!\hbox{}_{
\displaystyle{#2}}}
\def\resto#1#2{{
#1\hskip 0.4ex\vline_{\hskip 0.2ex\raisebox{-0,2ex}
{{${\scriptstyle #2}$}}}}}
\def\map{\longrightarrow}
\def\textmap#1{\mathop{\vbox{\ialign{
                                  ##\crcr
      ${\scriptstyle\hfil\;\;#1\;\;\hfil}$\crcr
      \noalign{\kern 1pt\nointerlineskip}
      \rightarrowfill\crcr}}\;}}
\def\bigtextmap#1{\mathop{\vbox{\ialign{
                                  ##\crcr
      ${\hfil\;\;#1\;\;\hfil}$\crcr
      \noalign{\kern 1pt\nointerlineskip}
      \rightarrowfill\crcr}}\;}}
\def\textlmap#1{\mathop{\vbox{\ialign{
                                  ##\crcr
      ${\scriptstyle\hfil\;\;#1\;\;\hfil}$\crcr
      \noalign{\kern-1pt\nointerlineskip}
      \leftarrowfill\crcr}}\;}}
\def\C{{\mathbb C}}
\def\N{{\mathbb N}}
\def\P{{\mathbb P}}
\def\R{{\mathbb R}}
\def\Z{{\mathbb Z}}
\def\ug{{\mathfrak u}}
\def\Mg{{\mathfrak M}}
\newtheorem{sz}{Satz}[section]
\newtheorem{thry}[sz]{Theorem}
\newtheorem{pr}[sz]{Proposition}
\newtheorem{re}[sz]{Remark}
\newtheorem{co}[sz]{Corollary}
\newtheorem{dt}[sz]{Definition}
\newtheorem{lm}[sz]{Lemma}
\def\U{\mathrm{U}}
\def\Pic{\mathrm {Pic}}
\def\deg{\mathrm {deg}}
\def\Hom{\mathrm{Hom}}
\def\id{ \mathrm{id}}
\def\rk{\mathrm {rk}}
\def\Tors{\mathrm {Tors}}
\def\supp{\mathrm {supp}}
\def\BN{\mathrm {BN}}
\def\U2{\mathrm{U(2)}}
\def\niq{=\kern-.18cm /\kern.08cm}
\newcommand{\cal}{\mathcal}
\def\TE{{^{\cal T}}\hspace{-1.1mm}{\cal E}}
\def\TS{{^{\cal T}}\hspace{-1.1mm}{\cal S}}
\def\Ext{\mathrm{Ext}}
\def\Ann{\mathrm{Ann}}
\begin{document}

\title{On the torsion of the first direct image of a locally free sheaf}
\author{Andrei Teleman}
\thanks{The author has been partially supported by the ANR project MNGNK, decision 
Nr.  ANR-10-BLAN-0118}
\thanks{The author is indebted to the referee for detailed and valuable  comments and for helpful suggestions.}
\address{Aix Marseille Université
CNRS, Centrale Marseille, I2M, UMR 7373 13453 Marseille, France } \email{andrei.teleman@univ-amu.fr}

\begin{abstract}
Let $\pi:M\to B$ be a proper holomorphic submersion between complex manifolds and ${\cal E}$ a holomorphic bundle on $M$. We study and describe explicitly the torsion subsheaf  $\Tors(R^1\pi_*({\cal E}))$ of the first direct image $R^1\pi_*({\cal E})$ under the assumption $R^0\pi_*({\cal E})=0$. We give two applications   of our results. The first concerns the locus of points in the base of a generically versal family of complex surfaces where the family is non-versal.  The second application is a vanishing result for $H^0(\Tors(R^1\pi_*({\cal E})))$ in a concrete situation related to our program to prove the existence of curves on class VII surfaces.
\end{abstract}
  
\maketitle

\setcounter{section}{-1}

\section{Introduction}

Let $B$, $M$ be complex manifolds, $\pi:M\to B$ a proper, surjective holomorphic submersion, and let ${\cal E}$ be a holomorphic bundle on  $M$.  Denote by ${\cal E}_x$ the restriction of ${\cal E}$ to the fiber $M_x$, regarded as a locally free sheaf on $M_x$. Grauert's  locally freeness theorem states that a direct image $R^k\pi_*({\cal E})$ is locally free when the map $x\mapsto h^k({\cal E}_x)$ is constant on $B$.  The starting point of this article is the following natural questions:  What can be said about the singularities of the sheaf $R^k\pi_*({\cal E})$ when this condition is not satisfied? Can one describe explicitly the torsion subsheaf of  $R^k\pi_*({\cal E})$?

In this article we will deal with these questions for $k=1$ under the assumption $R^0\pi_*({\cal E})=0$.
  Our first result concerns  the support of the torsion  subsheaf   $\Tors(R^1\pi_*({\cal E}))$ and states

\newtheorem*{th-supp}{Theorem \ref{supp}} 
\begin{th-supp}
Let $B$, $M$ be complex manifolds, $\pi:M\to B$ a proper, surjective holomorphic submersion, and ${\cal E}$ a holomorphic bundle on  $M$. If $R^0\pi_*({\cal E})=0$ then the support  $\supp(\Tors(R^1\pi_*({\cal E})))$ coincides with the maximal pure 1-codimensional analytic subset of $B$ which is contained  in the Brill-Noether locus
 $$\BN_\pi({\cal E}):=\{x\in B |\ h^0({\cal E}_x)\ne 0\}\ .$$
 In particular this support has pure codimension 1. 
\end{th-supp}

The statement   yields strong a priori properties of the first direct image  in a very general framework. This result has interesting consequences:

\newtheorem*{co-tf}{Corollary \ref{tf}} 
\begin{co-tf} If $\BN_\pi({\cal E})$ has codimension $\geq 2$ at any point, then $R^1\pi_*({\cal E})$ is torsion free. In particular the singularity set of this sheaf has codimension $\geq 2$ at any point.
\end{co-tf}

 Equivalently,  
\newtheorem*{co-div}{Corollary \ref{div}}
\begin{co-div} If  $R^0\pi_*({\cal E})=0$ and $\Tors(R^1\pi_*({\cal E}))\ne 0$, then  the Brill-Noether locus $\BN_\pi({\cal E})$ contains a non-empty effective divisor.
\end{co-div}
 
Another result concerns the natural question: supposing that we are in the conditions of Corollary \ref{tf}, how far is the torsion free sheaf $R^1\pi_*({\cal E})$ from being a free ${\cal O}_B$-module? The proposition below shows that  the Brill-Noether locus $\BN_\pi({\cal E})$   can be regarded as an obstruction to trivializing globally the torsion free sheaf $R^1\pi_*({\cal E})$.

\newtheorem*{pr-forGeorges}{Proposition \ref{forGeorges}} 
\begin{pr-forGeorges} Let $\pi:M\to B$ be a proper, surjective holomorphic submersion with connected base $B$, connected surfaces as fibers, and ${\cal E}$ a holomorphic bundle on  $M$ and such that 
\begin{enumerate}
\item The Brill-Noether locus $\BN_\pi({\cal E})$  
has codimension $\geq 2$ at every point. 
\item The map $B\ni x\mapsto h^2({\cal E}_x)\in\N$ is constant. 
\end{enumerate}
Put $k:=\rk(R^1\pi_*({\cal E}))$ and assume that
$$s=(s_1,\dots,s_k)\in H^0(B,R^1\pi_*({\cal E}))^{\oplus k}$$
is a system of sections with the property that $s(x_0)$ is linearly independent in the fiber $(R^1\pi_*({\cal E}))(x_0)$ for a point $x_0\in B\setminus \BN_\pi({\cal E})$. Put
 $${\cal S}:=\{x\in B\setminus\BN_\pi({\cal E})|\ s(x)\hbox{ is linearly dependent in }(R^1\pi_*({\cal E}))(x)\}\ .
 $$
 Then the closure $\bar {\cal S}$ of ${\cal S}$ is an effective divisor containing $\BN_\pi({\cal E})$. In particular this divisor is non-empty if $\BN_\pi({\cal E})$ is non-empty.
\end{pr-forGeorges}

In sections \ref{inductive} and \ref{ext} we give two explicit descriptions   of the torsion  subsheaf   $\Tors(R^1\pi_*({\cal E}))$.  The first result concerns only the case $ R^0\pi_*({\cal E})=0$ and identifies this torsion sheaf with an inductive limit indexed by the ordered set ${\cal D}iv(B)$ of effective divisors of the base $B$.

The second statement is more general, more precise, and implies the first. On the other hand, whereas  the second result uses formal algebraic homological techniques, the first one is proved with classical complex geometric methods. Therefore  we believe that both approaches are  interesting. 

\newtheorem*{th-indlim}{Theorem \ref{indlim}} 
\begin{th-indlim}  Suppose that $\pi$ has connected fibers and $R^0\pi_*({\cal E})=0$. Then there exists a canonical isomorphism

$$\varinjlim_{\substack{D\in{\cal D}iv(B) \\  |D|\subset \BN_\pi({\cal E})}} R^\pi_D=\varinjlim_{D\in{\cal D}iv(B)} R^\pi_D\textmap{\simeq \ \psi} \Tors(R^1\pi_*({\cal E}))\ ,
$$
where  ${\cal D}iv(B)$ denotes the ordered set of effective divisors on $B$ and, for an effective divisor $D\subset B$, we put ${\cal D}:=\pi^{-1}(D)$ and 
$$R_D^\pi:=\pi_*({\cal E}_{\cal D})(D)=\pi_*({\cal E}_{\cal D}({\cal D}))\in{\cal C}oh(B)\ . 
$$
\end{th-indlim}

Let $\pi:M\to B$ be a proper morphism of complex spaces  and ${\cal E}$ a coherent sheaf on $M$ which is flat over $B$. According to Corollary 4.11 p. 133 \cite{BS} there exists a coherent sheaf ${\cal T}_{\cal E}$ on $B$, unique up to isomorphism with the following property:   for every coherent sheaf ${\cal A}$ defined on an open set $U\subset B$,  there exists an isomorphism $\pi_*({\cal E}\otimes \pi^*({\cal A}))={\cal H}om({\cal T}_{\cal E},{\cal A})$ which is functorial with respect to ${\cal A}$. With this notation we can state:

\newtheorem*{th-new-desc}{Theorem \ref{new-desc}} 
\begin{th-new-desc} 
Let $\pi:M\to B$ be a proper morphism of complex spaces  and ${\cal E}$ a coherent sheaf on $M$ which is flat over $B$. Suppose that $B$ is locally irreducible. Then
\begin{enumerate}[(i)]
\item One has canonical isomorphisms
$${\cal E}xt^1({\cal T},{\cal O}_B)\textmap{\simeq}\Tors(R^1\pi_*({\cal E}))\ .
$$
\item If $\pi_*({\cal E})=0$ and $B$ is smooth, then
\begin{enumerate}
\item ${\cal T}$ is a torsion sheaf,
\item Denoting by $D_{\max}$ the maximal divisor contained in the complex subspace defined by   $\Ann({\cal T})$, one has canonical isomorphisms
$${\cal H}om({\cal T}_{D_{\max}},{\cal O}_{D_{\max}})\textmap{\simeq}{\cal H}om({\cal T},{\cal O}_{D_{\max}})\textmap{\simeq}\Ext^1({\cal T},{\cal O}_B)\textmap{\simeq}\Tors(R^1\pi_*({\cal E}))\ .
$$
\end{enumerate} 
 
\end{enumerate}

\end{th-new-desc}

These results have effective applications. For instance Corollary \ref{div} can be viewed as a tool to prove  existence of divisors on a given complex manifold. More precisely, let $B$ be a complex surface (for instance a class VII surface) and   ${\cal E}$ the universal bundle on  $B\times Y$ associated with an embedding  of a compact complex manifold $Y$ in a moduli space of simple bundles on $B$ \cite{Te3}. We conclude that,  if $\Tors(R^1\pi_*({\cal E}))\ne 0$, then $B$ has curves.

Proposition \ref{forGeorges} has been used in \cite{D2} for studying deformations of class VII surfaces, and is applied taking for ${\cal E}$ the relative tangent bundle of the deformation.   The author defines an explicit family of class VII surfaces with $b_2=b$ parameterized by an open set   $B\subset\C^{2b}$ which is generically versal. This family contains surfaces which admit (non-trivial) global tangent vector fields, so in this case the Brill-Noether locus is non-empty.  Therefore Proposition \ref{forGeorges} applies and gives a non-empty divisor in $B$ containing the points which correspond to surfaces with global tangent vector fields and the points where the family is non-versal. 
 An important problem in \cite{D2} is to determine this divisor explicitly.

Finally, Theorem \ref{indlim} can be used to ``compute" $H^0(\Tors(R^1\pi_*({\cal E})))$ supposing that the restrictions of ${\cal E}$ to the ``vertical" divisors ${\cal D}:=\pi^{-1}(D)$ are known. 
Note that, under the assumption $R^0\pi_*({\cal E})=0$, the Leray spectral sequence associated with the pair $({\cal E},\pi)$  induces a canonical isomorphism $H^1({\cal E})\textmap{\simeq} H^0(R^1\pi_*({\cal E}))$.  Therefore   $H^0(R^1\pi_*({\cal E}))$ (in particular its subspace $H^0(\Tors(R^1\pi_*({\cal E})))$) is relevant for the computation of the cohomology space $H^1({\cal E})$.
We will discuss   these applications in section \ref{App}.

 \section{First properties of $R^1\pi_*({\cal E})$}

In this section we prove the first general properties of the sheaf $R^1\pi_*({\cal E})$ under the assumption $R^0\pi_*({\cal E})=0$.  As usually we will use the same notation for a holomorphic bundle and the associated locally free sheaf of local sections. 

Let $B$, $M$ be complex manifolds, $\pi:M\to B$ a proper, surjective holomorphic submersion, and let ${\cal E}$ be a holomorphic bundle on  $M$.  
Let $U\subset B$ be an open set, $\varphi\in {\cal O}(U)$ a non-trivial holomorphic function, and $D:=Z(\varphi)$ the associated effective divisor.   Let $m_\varphi: \resto{R^1\pi_*({\cal E})}{U}\to \resto{R^1\pi_*({\cal E})}{U}$ be the morphism defined by multiplication with $\varphi$. By the definition of the ${\cal O}_B$-module structure on $R^1\pi_*({\cal E})$, the morphism $m_\varphi$ is just $R^1\pi_*(m_\Phi)$, where  $m_\Phi$ is the morphism of sheaves $\resto{{\cal E}}{\pi^{-1}(U)}\to \resto{{\cal E}}{\pi^{-1}(U)}$ defined by multiplication with the function
$$\Phi:=\pi^*(\varphi)=\varphi\circ\pi\in {\cal O}(\pi^{-1}(U))\ .$$ 

Tensorising by the locally free sheaf ${\cal E}$ the tautological exact sequence associated with the pull-back divisor ${\cal D}=Z(\Phi)$, we get the short exact sequence
$$0\map \resto{{\cal E}}{\pi^{-1}(U)}\textmap{\Phi\cdot}  \resto{{\cal E}}{\pi^{-1}(U)}\map {\cal E}_{\cal D}\map 0
$$
which yields
\begin{equation}\label{les} 0\map  \pi_*(\resto{{\cal E}}{\pi^{-1}(U)})\textmap{\varphi\cdot }   \pi_*(\resto{{\cal E}}{\pi^{-1}(U)})\map
\phantom{MMMMMMMMMMMMM}$$
$$\phantom{MMMMMMM}\map    \pi_*({\cal E}_{\cal D})\to R^1\pi_*(\resto{{\cal E}}{\pi^{-1}(U)})\textmap{R^1\pi_*(m_\Phi)}  R^1\pi_*(\resto{{\cal E}}{\pi^{-1}(U)})\dots
\end{equation}

Denote by $j$ and $J$ the inclusions of $D$ and ${\cal D}$ in $B$ and $M$ respectively.
The sheaf ${\cal E}_{\cal D}$ can be written as $J_*(\resto{{\cal E}}{{\cal D}})$ hence,   since $\pi\circ J=j\circ (\resto{\pi}{{\cal D}})$ we obtain
\begin{equation}\label{J}
\pi_*({\cal E}_{\cal D})=\pi_*(J_*(\resto{{\cal E}}{{\cal D}}))=(\pi\circ J)_*(\resto{{\cal E}}{{\cal D}})=(j\circ \resto{\pi}{{\cal D}})_*(\resto{{\cal E}}{{\cal D}})=j_*\left((\resto{\pi}{{\cal D}})_*(\resto{{\cal E}}{{\cal D}})\right)\ .
\end{equation}
\begin{dt} The Brill-Noether locus of the pair $(\pi,{\cal E})$ is defined by
 $$\BN_\pi({\cal E}):=\{x\in B |\ h^0({\cal E}_x)\ne 0\}\subset B\ ,
$$
where, for $x\in B$ we denoted by ${\cal E}_x$ the restriction of ${\cal E}$ to the fiber $M_x:=\pi^{-1}(x)$.
\end{dt}
\begin{lm}\label{red} Suppose that the divisor $Z(\varphi)$ is reduced, and that $\BN_\pi({\cal E})\cap Z(\varphi)$ has codimension $\geq 2$ at every point. Then 
$$\ker(m_\varphi:\resto{R^1\pi_*({\cal E})}{U}\to \resto{R^1\pi_*({\cal E})}{U})=0\ .
$$
\end{lm}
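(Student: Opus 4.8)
The plan is to identify $\ker(m_\varphi)$ with the direct image $\pi_*({\cal E}_{\cal D})$ and then to show that the latter vanishes. Since $R^0\pi_*({\cal E})=0$, the two left-most terms of the long exact sequence \eqref{les} are zero, so the connecting arrow identifies $\pi_*({\cal E}_{\cal D})$ with $\ker(m_\varphi)$; it therefore suffices to prove $\pi_*({\cal E}_{\cal D})=0$. Using \eqref{J} I would write $\pi_*({\cal E}_{\cal D})=j_*{\cal F}$ with ${\cal F}:=(\resto{\pi}{{\cal D}})_*(\resto{{\cal E}}{{\cal D}})\in{\cal C}oh(D)$, so the statement reduces to ${\cal F}=0$. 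At the outset I would record that, $\varphi$ being reduced, $D$ is a reduced, pure $1$-codimensional analytic subset of $B$, and that ${\cal D}=\pi^{-1}(D)$ is likewise reduced and pure $1$-codimensional in $M$ because $\pi$ is a submersion; moreover $\resto{\pi}{{\cal D}}$ is proper, flat (hence open) and surjective.

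I would prove ${\cal F}=0$ in two steps. The first is \emph{generic vanishing}. On the regular locus $D_{\mathrm{reg}}$, a codimension-$1$ submanifold of $B$, the restriction of $\pi$ over $D_{\mathrm{reg}}$ is again a proper holomorphic submersion with fibres $M_x$, and $\resto{{\cal E}}{{\cal D}}$ restricts to ${\cal E}_x$ on each of them. By the codimension hypothesis $\BN_\pi({\cal E})\cap D$ has codimension $\geq 1$ in $D$, so $D_{\mathrm{reg}}\setminus\BN_\pi({\cal E})$ is dense and open in $D$; there the function $x\mapsto h^0({\cal E}_x)$ is identically $0$, and Grauert's freeness theorem shows that the corresponding direct image, which is exactly $\resto{{\cal F}}{D_{\mathrm{reg}}\setminus\BN_\pi({\cal E})}$, is locally free of rank $0$, i.e. ${\cal F}$ vanishes on this dense open set. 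Consequently $\supp({\cal F})\subset\BN_\pi({\cal E})\cap D$ is nowhere dense in $D$.

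The second step is \emph{torsion-freeness} of ${\cal F}$ on the reduced space $D$, which is the crux. Let $g\in{\cal O}_D$ be a non-zero-divisor and $s$ a local section of ${\cal F}$ with $gs=0$; then $s$ is a local section of the locally free sheaf $\resto{{\cal E}}{{\cal D}}$ annihilated by $(\resto{\pi}{{\cal D}})^*(g)$. Since $g$ vanishes on no irreducible component of the reduced space $D$, and since $\resto{\pi}{{\cal D}}$, being open and proper, carries each component of ${\cal D}$ onto a component of $D$, the function $(\resto{\pi}{{\cal D}})^*(g)$ vanishes on no component of the reduced space ${\cal D}$ and is hence a non-zero-divisor there; multiplication by it is therefore injective on the locally free sheaf $\resto{{\cal E}}{{\cal D}}$, forcing $s=0$. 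To finish, I would use the elementary fact that a torsion-free coherent sheaf on a reduced, pure-dimensional space with nowhere dense support must vanish: locally one cuts out the support by a non-zero-divisor $g_0$ (possible because the support contains no component), some power $g_0^N$ annihilates any given section by the Nullstellensatz, and torsion-freeness then forces that section to be $0$. Applying this to ${\cal F}$, whose support is nowhere dense by the first step, yields ${\cal F}=0$, hence $\ker(m_\varphi)=\pi_*({\cal E}_{\cal D})=0$.

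I expect the torsion-freeness step to be the main obstacle. Generic vanishing alone only confines $\supp({\cal F})$ to the codimension-$\geq 1$ locus $\BN_\pi({\cal E})\cap D$, and a coherent sheaf may perfectly well be supported on such a set; what excludes this is precisely that $\varphi$ is reduced, which makes ${\cal D}$ reduced and allows the local freeness of ${\cal E}$ to upgrade ``$g$ a non-zero-divisor on $D$'' to ``$(\resto{\pi}{{\cal D}})^*(g)$ a non-zero-divisor on ${\cal D}$''. This is exactly where the reducedness assumption of the lemma is genuinely used.
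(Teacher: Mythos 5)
Your proof is correct, and it begins with the same reduction as the paper's: via the exact sequence (\ref{les}) and formula (\ref{J}), everything comes down to showing ${\cal F}:=(\resto{\pi}{{\cal D}})_*(\resto{{\cal E}}{{\cal D}})=0$. (You do not actually need $R^0\pi_*({\cal E})=0$ here: (\ref{les}) shows $\pi_*({\cal E}_{\cal D})$ surjects onto $\ker(m_\varphi)$ in any case.) Where you diverge is in how this vanishing is established. The paper does it in one stroke: a section of ${\cal F}$ over $V\subset D$ is a holomorphic section of the bundle $\resto{{\cal E}}{{\cal D}}$ over the reduced space $W=\pi^{-1}(V)$, it vanishes pointwise on the dense subset $W\setminus(\resto{\pi}{{\cal D}})^{-1}(\BN_\pi({\cal E}))$ because it vanishes fiberwise there, and pointwise vanishing on a dense subset of a reduced space kills the section. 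You instead prove two structural facts --- that ${\cal F}$ has nowhere dense support (Grauert over $D_{\mathrm{reg}}\setminus\BN_\pi({\cal E})$) and that ${\cal F}$ is torsion-free over the reduced space $D$ --- and combine them via the R\"uckert Nullstellensatz. Both routes rest on exactly the same two inputs (reducedness of ${\cal D}$ and density of $D\setminus\BN_\pi({\cal E})$ in $D$); yours is longer but isolates the torsion-freeness of the direct image, essentially re-proving Lemma \ref{tfapp} over the base $D$, which is only reduced rather than locally irreducible --- and you handle that correctly by characterizing non-zero-divisors as functions vanishing on no local irreducible component. The one step I would tighten is the assertion that $\resto{\pi}{{\cal D}}$, ``being open and proper, carries each component of ${\cal D}$ onto a component of $D$'': openness of $\resto{\pi}{{\cal D}}$ does not immediately give openness of its restriction to a single irreducible component. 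The conclusion is nevertheless true and is most cleanly seen from the local product structure of ${\cal D}$ over $D$ afforded by the submersion $\pi$ (or from a dimension count showing $\pi(C)$ has dimension $\dim D$ for every component $C$ of ${\cal D}$), so this is a presentational gap rather than a mathematical one.
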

\begin{proof}  Taken into account the exact sequence (\ref{les}) and formula (\ref{J})   it suffices to prove that 
$(\resto{\pi}{{\cal D}})_*(\resto{{\cal E}}{{\cal D}})=0$.
  Let $V\subset D:=Z(\varphi)$ be an open set and $W$ its pre-image in ${\cal D}$. One has
$$(\resto{\pi}{{\cal D}})_*(\resto{{\cal E}}{{\cal D}})(V)=H^0(W, \resto{{\cal E}}{{\cal D}})\ .
$$
Since $W$ is reduced, the vanishing of a section $s\in H^0(W, \resto{{\cal E}}{{\cal D}})$ can be tested pointwise. But the restriction of any such section to the dense set 
$$W\setminus (\resto{\pi}{{\cal D}})^{-1}(\BN_\pi({\cal E}))$$
 vanishes obviously (because it vanishes fiberwise).  This shows $H^0(W, \resto{{\cal E}}{{\cal D}})=0$.
\end{proof}

Using Lemma \ref{red} we can prove now our first result about the torsion of $R^1\pi_*({\cal E})$:
\begin{thry}\label{supp}
Let $B$, $M$ be complex manifolds, $\pi:M\to B$ a proper, surjective holomorphic submersion, and let ${\cal E}$ be a holomorphic bundle on  $M$. If $R^0\pi_*({\cal E})=0$ then the support  $\supp(\Tors(R^1\pi_*({\cal E})))$ coincides with the maximal pure 1-codimensional analytic subset $D_\pi({\cal E})\subset B$   contained  in  $\BN_\pi({\cal E})$.
 In particular this support has pure codimension 1. 
\end{thry}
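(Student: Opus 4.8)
The plan is to prove the two inclusions separately, using throughout the identification furnished by the exact sequence (\ref{les}). Since $R^0\pi_*({\cal E})=0$, for every non-trivial holomorphic function $\varphi$ on an open set $U\subseteq B$ the long exact sequence (\ref{les}) collapses to a canonical isomorphism $\pi_*({\cal E}_{\cal D})\cong\ker(m_\varphi)$, where ${\cal D}=\pi^{-1}(Z(\varphi))$. Thus $\ker(m_\varphi)$ is precisely the $\varphi$-torsion of $R^1\pi_*({\cal E})$, and a point $x$ lies in $\supp(\Tors(R^1\pi_*({\cal E})))$ if and only if $\pi_*({\cal E}_{\cal D})_x\ne 0$ for some non-trivial germ $\varphi$ at $x$. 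Writing $\BN^{(1)}$ for the union of the $1$-codimensional irreducible components of the analytic set $\BN_\pi({\cal E})$ — which is exactly the maximal pure $1$-codimensional analytic subset $D_\pi({\cal E})$ — the theorem amounts to the equality $\supp(\Tors(R^1\pi_*({\cal E})))=\BN^{(1)}$.

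For the inclusion $\supp(\Tors(R^1\pi_*({\cal E})))\subseteq\BN^{(1)}$ I would argue by contradiction using the local structure of $\BN_\pi({\cal E})$. Fix $x\notin\BN^{(1)}$; then no $1$-codimensional component of $\BN_\pi({\cal E})$ passes through $x$, so on a small neighborhood $U$ the set $\BN_\pi({\cal E})\cap U$ has codimension $\geq 2$. Let $s$ be any torsion germ at $x$, killed by a non-trivial $\varphi\in{\cal O}_{B,x}$, and factor $\varphi=\prod_i\varphi_i^{a_i}$ into irreducibles (recall ${\cal O}_{B,x}$ is a UFD). Each $\varphi_i$ is reduced, and since $\BN_\pi({\cal E})$ has no $1$-codimensional component near $x$, the irreducible hypersurface $Z(\varphi_i)$ is not contained in $\BN_\pi({\cal E})$; hence $\BN_\pi({\cal E})\cap Z(\varphi_i)$ has codimension $\geq 2$ and Lemma \ref{red} shows that $m_{\varphi_i}$ is injective. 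As $m_\varphi$ is the composite of the commuting injective morphisms $m_{\varphi_i}$, it is injective, so $s=0$. This gives $\Tors(R^1\pi_*({\cal E}))_x=0$, i.e. $x\notin\supp(\Tors(R^1\pi_*({\cal E})))$.

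For the reverse inclusion $\BN^{(1)}\subseteq\supp(\Tors(R^1\pi_*({\cal E})))$ I would take a $1$-codimensional component $H$ of $\BN_\pi({\cal E})$ and a reduced local equation $\varphi$ of $H$, so that $Z(\varphi)=H$ locally and ${\cal D}=\pi^{-1}(H)$. By formula (\ref{J}) it suffices to show that $(\resto{\pi}{{\cal D}})_*(\resto{{\cal E}}{{\cal D}})$ is a non-zero sheaf on $H$. Every fiber $M_y$ over $y\in H\subseteq\BN_\pi({\cal E})$ satisfies $h^0({\cal E}_y)\ne 0$; by upper semicontinuity there is a dense open subset $H^\circ$ of the smooth locus of $H$ on which $y\mapsto h^0({\cal E}_y)$ is locally constant and $\geq 1$, and over $H^\circ$ the family $\resto{\pi}{{\cal D}}$ is again a submersion, so Grauert's local freeness theorem makes $(\resto{\pi}{{\cal D}})_*(\resto{{\cal E}}{{\cal D}})$ locally free of positive rank, hence non-zero. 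Therefore $\pi_*({\cal E}_{\cal D})\cong\ker(m_\varphi)$ has non-zero stalks on $H^\circ$, giving $H^\circ\subseteq\supp(\Tors(R^1\pi_*({\cal E})))$; since this support is closed and analytic and $H$ is irreducible, $H=\overline{H^\circ}\subseteq\supp(\Tors(R^1\pi_*({\cal E})))$.

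The step I expect to be the main obstacle is the passage, in the reverse inclusion, from the pointwise nonvanishing $h^0({\cal E}_y)\ne 0$ on $H$ to the nonvanishing of the relative $H^0$-sheaf: one must invoke semicontinuity and base change on the reduced, possibly singular hypersurface $H$ to produce a dense locus of locally constant, positive $h^0$, and verify that Grauert's local freeness genuinely applies there so that the direct image cannot vanish identically. The remaining ingredients — the collapse of (\ref{les}) under $R^0\pi_*({\cal E})=0$, the UFD factorization reducing every torsion relation to reduced divisors, and the application of Lemma \ref{red} — are then routine.
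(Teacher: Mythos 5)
Your proof is correct and follows essentially the same route as the paper's: both inclusions rest on the exact sequence (\ref{les}) together with Lemma \ref{red} for the forward direction and Grauert's semicontinuity/local-freeness theorems for the reverse one. The only differences are presentational — you argue the first inclusion contrapositively via the UFD factorization of the annihilating germ where the paper reduces directly to an irreducible annihilator, and you spell out the positivity of the rank of $(\resto{\pi}{{\cal D}})_*(\resto{{\cal E}}{{\cal D}})$ that the paper simply cites.
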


\begin{proof}
Let $x\in\supp(\Tors(R^1\pi_*({\cal E})))$. Therefore there exists   $u\in R^1\pi_*({\cal E})_x\setminus\{0\}$ and a germ $\varphi_x=(x,U\textmap{\varphi}\C)\in{\cal O}_x\setminus\{0\}$ such that $\varphi_x u=0$. We may suppose that $\varphi_x$ is irreducible in the local ring ${\cal O}_x$. Recall that the set of points at which a complex space is reduced is Zariski open. This is a consequence of Cartan's coherence theorem for the ideal sheaf of nilpotent elements (see \cite{KK} 47.II, E 47 c p. 182). Therefore there exists an open neighborhood of $x$ in  $Z(\varphi)$ which is reduced. In other words, replacing $U$ by a smaller open neighborhood of $x$ in $B$ if necessary,  we may suppose that the effective divisor $D:=Z(\varphi)\subset U$ is reduced\footnote{Note that, in complex analytic geometry, being irreducible at a point is not an open property, as one can see in the example of the Withney umbrella, which is defined by the equation $xy^2 = z^2$. 
}.

Using Lemma \ref{red} and taking into account  that $D$ is irreducible at $x$, we conclude that locally around $x$ the divisor $D$ is  contained in  $\BN_\pi({\cal E})$. In other words, taking $U$ sufficiently small we will have $D\subset \BN_\pi({\cal E})$, hence $x\in D_\pi({\cal E})$. This proves the inclusion $\supp(\Tors(R^1\pi_*({\cal E})))\subset D_\pi({\cal E})$. Conversely let $x\in D_\pi({\cal E})$  and $\varphi_x=(x,U\textmap{\varphi}\C)\in{\cal O}_x\setminus\{0\}$ an irreducible germ defining an irreducible component of $D_\pi({\cal E})$ at $x$. Taking $U$ sufficiently small we may assume that $D:=Z(\varphi)$ is reduced. Using again the exact sequence  (\ref{les}) we see that the sheaf $\pi_*({\cal E}_{\cal D})=j_*\left((\resto{\pi}{{\cal D}})_*(\resto{{\cal E}}{{\cal D}})\right)$ is mapped injectively into $\Tors(R^1\pi_*({\cal E}))$.  But, by Grauert's theorems, the sheaf  $(\resto{\pi}{{\cal D}})_*(\resto{{\cal E}}{{\cal D}})$ has positive rank on $D$. Therefore $D\subset \supp(\Tors(R^1\pi_*({\cal E})))$, which proves the inclusion $D_\pi({\cal E})\subset \supp(\Tors(R^1\pi_*({\cal E})))$.

\end{proof}

\begin{co} \label{tf} Suppose that the Brill-Noether locus $\BN_\pi({\cal E})$
of the pair $(\pi,{\cal E})$ has codimension $\geq 2$ at every point. Then the first direct image $R^1\pi_*({\cal E})$ is torsion free.
\end{co}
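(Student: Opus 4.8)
The plan is to derive Corollary~\ref{tf} as an immediate consequence of Theorem~\ref{supp}. The key observation is that the support of a coherent sheaf is empty if and only if the sheaf is zero, so it suffices to show that $\supp(\Tors(R^1\pi_*({\cal E})))=\emptyset$ under the hypothesis that $\BN_\pi({\cal E})$ has codimension $\geq 2$ everywhere.

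First I would invoke Theorem~\ref{supp}, which identifies $\supp(\Tors(R^1\pi_*({\cal E})))$ with the maximal pure $1$-codimensional analytic subset $D_\pi({\cal E})$ contained in the Brill--Noether locus $\BN_\pi({\cal E})$. The crux is then purely set-theoretic: a pure $1$-codimensional analytic subset contained in $\BN_\pi({\cal E})$ must, at each of its points, be contained in $\BN_\pi({\cal E})$ with codimension exactly $1$. But the hypothesis asserts that $\BN_\pi({\cal E})$ has codimension $\geq 2$ at every point, so no analytic subset of codimension $1$ can be contained in it. Hence $D_\pi({\cal E})=\emptyset$.

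Combining these two steps, $\supp(\Tors(R^1\pi_*({\cal E})))=D_\pi({\cal E})=\emptyset$, and therefore $\Tors(R^1\pi_*({\cal E}))=0$, which is exactly the assertion that $R^1\pi_*({\cal E})$ is torsion free.

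I do not expect any genuine obstacle here, as the result is a formal corollary of the preceding theorem. The only point deserving a word of care is the elementary fact that a coherent analytic sheaf with empty support vanishes; this is standard, so I would state it without further comment. Note also that the conclusion requires no assumption $R^0\pi_*({\cal E})=0$ beyond what is already built into Theorem~\ref{supp}, whose statement presupposes it.
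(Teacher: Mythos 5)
Your argument is correct and is exactly the deduction the paper intends: Corollary~\ref{tf} is stated with no separate proof precisely because, by Theorem~\ref{supp}, the support of $\Tors(R^1\pi_*({\cal E}))$ is a pure $1$-codimensional analytic set contained in $\BN_\pi({\cal E})$, which is forced to be empty under the codimension $\geq 2$ hypothesis. The one point worth making explicit (which you only gesture at) is that the hypothesis $R^0\pi_*({\cal E})=0$ required by Theorem~\ref{supp} is automatically satisfied here: by Lemma~\ref{tfapp}, $\pi_*({\cal E})$ is torsion free and vanishes on each connected component of $B$ as soon as some fiber has $h^0({\cal E}_x)=0$, which the codimension $\geq 2$ assumption guarantees.
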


We also state explicitly the following obvious reformulation of this Corollary \ref{tf}, which can be regarded as {\it a criterion for existence of divisors} in the base of the fibration:
\begin{co} \label{div} If  $R^0\pi_*({\cal E})=0$ and $\Tors(R^1\pi_*({\cal E}))\ne 0$, then  the Brill-Noether locus $\BN_\pi({\cal E})$ contains a non-empty effective divisor.
\end{co}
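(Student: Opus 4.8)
The plan is to read this corollary directly off Theorem \ref{supp}, of which it is essentially the contrapositive of Corollary \ref{tf}. Suppose $R^0\pi_*({\cal E})=0$ and $\Tors(R^1\pi_*({\cal E}))\ne 0$. Then the support $\supp(\Tors(R^1\pi_*({\cal E})))$ is a non-empty closed analytic subset of $B$. By Theorem \ref{supp}, this support coincides with the maximal pure $1$-codimensional analytic subset $D_\pi({\cal E})\subset B$ contained in $\BN_\pi({\cal E})$. Hence $D_\pi({\cal E})$ is itself non-empty and, by the ``in particular'' clause of that theorem, it is purely $1$-codimensional.

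The only remaining point is to promote the non-empty pure codimension-$1$ analytic subset $D_\pi({\cal E})$ to an effective divisor. Since $B$ is a complex manifold, its local rings ${\cal O}_x$ are regular, hence factorial, so every irreducible codimension-$1$ analytic germ in $B$ is the zero locus of an irreducible germ in ${\cal O}_x\setminus\{0\}$. Consequently $D_\pi({\cal E})$, being pure $1$-codimensional, is the support of a canonical reduced effective divisor on $B$ (assign multiplicity one to each irreducible component). This divisor is non-empty because $D_\pi({\cal E})\ne\emptyset$, and it is contained in $\BN_\pi({\cal E})$ because $D_\pi({\cal E})\subset\BN_\pi({\cal E})$ by construction.

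I do not expect a genuine obstacle here: all the substantive work is already carried out in the proof of Theorem \ref{supp}, and the passage from a pure codimension-$1$ analytic set to a reduced effective divisor on a manifold is the standard correspondence used throughout the paper (cf. the notation $D=Z(\varphi)$ in Section \ref{red}'s setup). The statement is therefore immediate once Theorem \ref{supp} is available, which is why it is presented as an ``obvious reformulation'' of Corollary \ref{tf}.
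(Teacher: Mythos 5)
Your proposal is correct and follows exactly the route the paper intends: the corollary is stated as an immediate reformulation of Corollary \ref{tf}, i.e.\ read off Theorem \ref{supp}, and your extra care in upgrading the non-empty pure codimension-$1$ set $D_\pi({\cal E})$ to a reduced effective divisor via factoriality of the local rings of the manifold $B$ only makes explicit what the paper leaves implicit.
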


A similar statement is obtained if one replaces the condition $R^0\pi_*({\cal E})=0$ with the condition ``cohomologically flat in dimension 0" (see \cite{BS} p. 133-134):

 \begin{pr} \label{tfnew} Suppose that $B\ni x\mapsto h^0({\cal E}_x)\in\N$ is constant. Then $R^1\pi_*({\cal E})$ is torsion free.
\end{pr}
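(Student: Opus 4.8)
The plan is to follow the pattern of Lemma \ref{red} and Theorem \ref{supp}, but to replace the vanishing input $R^0\pi_*({\cal E})=0$ by an input coming from Grauert's base change theorem. Since $B$ is a manifold, every local ring ${\cal O}_{B,x}$ is an integral domain, so a germ is a non-zero-divisor precisely when it is non-trivial; hence it suffices to prove that for every open $U\subset B$ and every non-trivial $\varphi\in{\cal O}(U)$ the multiplication morphism $m_\varphi$ on $\resto{R^1\pi_*({\cal E})}{U}$ is injective. Writing $D:=Z(\varphi)$ and ${\cal D}:=\pi^{-1}(D)$ as before, the exactness of the long exact sequence (\ref{les}) shows that $\ker(m_\varphi)$ is the image of $\pi_*({\cal E}_{\cal D})$, that is, the cokernel of the restriction morphism $r:\pi_*({\cal E})\to\pi_*({\cal E}_{\cal D})$. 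Thus the whole statement reduces to proving that $r$ is surjective.

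The key step is to feed in the hypothesis. The assumption that $x\mapsto h^0({\cal E}_x)$ is constant means that ${\cal E}$ is cohomologically flat in dimension $0$, so by Grauert's theorem (see \cite{BS}, p.\ 133--134) the sheaf $\pi_*({\cal E})$ is locally free and its formation commutes with base change. I would apply base change along the closed immersion $j:D\hookrightarrow U$: since ${\cal D}=Z(\pi^*\varphi)$ coincides, as a complex space, with the fibre product of $\resto{\pi}{\pi^{-1}(U)}$ and $j$, and $\resto{{\cal E}}{{\cal D}}$ is the pull-back of ${\cal E}$ to it, the base change isomorphism reads $j^*\pi_*({\cal E})\xrightarrow{\simeq}(\resto{\pi}{{\cal D}})_*(\resto{{\cal E}}{{\cal D}})$. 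Pushing forward by $j_*$ and using (\ref{J}) yields $\pi_*({\cal E}_{\cal D})\cong j_*(j^*\pi_*({\cal E}))=\pi_*({\cal E})\big/\varphi\,\pi_*({\cal E})$; moreover, by naturality of the base-change morphism with respect to the adjunction unit, $r$ is identified with the tautological quotient map $\pi_*({\cal E})\to\pi_*({\cal E})\big/\varphi\,\pi_*({\cal E})$. The latter is obviously surjective, so $\ker(m_\varphi)=0$, as required.

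I expect the only genuine obstacle to be the bookkeeping in the second paragraph: one must verify carefully that the restriction morphism $r$ obtained from (\ref{les}) really corresponds, under the base-change isomorphism, to the canonical quotient $\pi_*({\cal E})\to\pi_*({\cal E})\big/\varphi\,\pi_*({\cal E})$, and that Grauert's base change applies to the (possibly non-reduced) divisor $D$, i.e.\ to the non-reduced closed immersion $j$. Both are consequences of the functoriality built into the direct image theorem, but they are exactly the points where the hypothesis enters and so should be spelled out. Note that, in contrast with Theorem \ref{supp}, no reducedness reduction is needed here and the Brill--Noether locus plays no role: local freeness of $\pi_*({\cal E})$ forces $m_\varphi$ to be injective for \emph{every} non-trivial $\varphi$, which is why the conclusion holds with no codimension hypothesis on $\BN_\pi({\cal E})$.
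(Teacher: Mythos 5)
Your argument is correct and is essentially the paper's own proof: the paper likewise reduces injectivity of $m_\varphi$ to surjectivity of $\pi_*(\resto{{\cal E}}{\pi^{-1}(U)})\to\pi_*({\cal E}_{\cal D})$ via the exact sequence (\ref{les}), and then uses Grauert's base change (for the possibly non-reduced divisor $D$) to identify $\pi_*({\cal E}_{\cal D})$ with $\pi_*(\resto{{\cal E}}{\pi^{-1}(U)})\otimes{\cal O}_D$ and the restriction map with the canonical epimorphism. (The paper also notes a one-line alternative: cohomological flatness in dimension $0$ plus Theorem 4.10$(a')$ of \cite{BS} gives the injectivity of $m_\varphi$ directly.)
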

\begin{proof} Since the map $B\ni x\mapsto h^0({\cal E}_x)\in\N$ is constant, $\pi_*({\cal E})$ is locally free and commutes with base change by Grauert's theorems (see \cite{BS} Theorems 4.10(d), 4.12). Here we used the properness and the flatness of $\pi$ (which implies the flatness of ${\cal E}$ over $B$). Using the base change property we see that the natural morphism
$$\resto{\pi_*(\resto{{\cal E}}{\pi^{-1}(U)})}{D}
 \map   (\resto{\pi}{{\cal D}})_*({\cal E}_{\cal D})
$$
is an isomorphism. Applying the functor $(i_D)_*$ to the two sheaves, it follows that the natural morphism
$$ \pi_*(\resto{{\cal E}}{\pi^{-1}(U)})\otimes{\cal O}_D \to \pi_*({\cal E}_{\cal D})$$
is an isomorphism. Via this isomorphism the morphism %
$ \pi_*(\resto{{\cal E}}{\pi^{-1}(U)})\to  \pi_*({\cal E}_{\cal D})$
 in the exact sequence  (\ref{les}) corresponds to the canonical epimorphism  
$$ \pi_*(\resto{{\cal E}}{\pi^{-1}(U)})\to \pi_*(\resto{{\cal E}}{\pi^{-1}(U)})\otimes{\cal O}_D$$
 so is surjective.
Therefore 
$$ \ker(m_\varphi:\resto{R^1\pi_*({\cal E})}{U}\to \resto{R^1\pi_*({\cal E})}{U})=\{0\}$$
 by the exact sequence (\ref{les}). 

An alternative proof can be obtained using the definition of cohomologically flatness in dimension 0. For any  open set $U\subset B$ and any non-trivial holomorphic map $\varphi\in {\cal O}^*(U)$ we get a monomorphism $0\to {\cal O}(U)\textmap{\varphi}{\cal O}(U)$ so, by   Theorem 4.10 $(a')$ \cite{BS} the induced morphism $m_\varphi:\resto{R^1\pi_*({\cal E})}{U}\to \resto{R^1\pi_*({\cal E})}{U}$ is injective.
\end{proof}

Corollary  \ref{tf} and Proposition \ref{tfnew} give criteria which guarantee   the first direct image $R^1\pi_*({\cal E})$ being torsion free. The proposition below gives a criterium which guarantees   this sheaf being  a  free ${\cal O}_X$-module.
\begin{pr}\label{free} Let $\pi:M\to B$ be a proper, surjective holomorphic submersion with connected surfaces as fibers, and ${\cal E}$ a holomorphic bundle on  $M$   such that 
\begin{enumerate}
\item The Brill-Noether locus $\BN_\pi({\cal E}):=\{x\in B |\ h^0({\cal E}_x)\ne 0\}$
has codimension $\geq 2$ at every point. 
\item The map $B\ni x\mapsto h^2({\cal E}_x)\in\N$ is constant. 
\end{enumerate}
Put $k:=\rk(R^1\pi_*({\cal E}))$ and assume that $s=(s_1,\dots,s_k)\in H^0(B,R^1\pi_*({\cal E}))^{\oplus k}$ is a system of sections such that $s(x)$ is linearly independent in the fiber $(R^1\pi_*({\cal E}))(x)$ for every $x\in B\setminus \BN_\pi({\cal E})$.    Then  
\begin{enumerate}
\item  The morphism $\sigma:{\cal O}_B^{\oplus k}\to R^1\pi_*({\cal E})$ defined by $s$ is an isomorphism, in particular
  $R^1\pi_*({\cal E})$  is a free ${\cal O}_X$-module and  $s(x)$  is linearly independent in   $(R^1\pi_*({\cal E}))(x)$ for every $x\in B$.
  \item $\BN_\pi({\cal E})=\emptyset$.
\end{enumerate}

\end{pr}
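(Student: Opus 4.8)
The plan is to prove assertion (1) and then read off assertion (2) as a consequence: once $\sigma:{\cal O}_B^{\oplus k}\to R^1\pi_*({\cal E})$ is known to be an isomorphism, $R^1\pi_*({\cal E})$ is locally free of rank $k$, so all its fibres have dimension $k$; combined with the base change identification of the fibres with $H^1({\cal E}_x)$ established below, this forces $h^1({\cal E}_x)=k$, hence $h^0({\cal E}_x)=0$, for every $x$, i.e. $\BN_\pi({\cal E})=\emptyset$. Before touching $\sigma$ I would record two preliminary facts. Since $\BN_\pi({\cal E})$ has codimension $\geq 2$, the set $B\setminus\BN_\pi({\cal E})$ is dense and $h^0({\cal E}_x)=0$ on it; hence a section of ${\cal E}$ over $\pi^{-1}(U)$ vanishes on the dense subset $\pi^{-1}(U\setminus\BN_\pi({\cal E}))$ and so vanishes identically, giving $R^0\pi_*({\cal E})=0$. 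Corollary \ref{tf} then applies and $R^1\pi_*({\cal E})$ is torsion free.

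The technical heart is a base change analysis, and this is where hypothesis (2) and the surface hypothesis enter. Since $x\mapsto h^2({\cal E}_x)$ is constant, Grauert's theorem gives that $R^2\pi_*({\cal E})$ is locally free and that the base change map $\phi^2(x)$ is an isomorphism for every $x$; because the fibres are surfaces, $R^2\pi_*({\cal E})$ is the top non-vanishing direct image, so the cohomology-and-base-change exact sequence (\cite{BS}, Thm. 4.10) can be started at level $i=2$. Feeding the local freeness of $R^2\pi_*({\cal E})$ into that sequence shows that $\phi^1(x)$ is surjective, hence an isomorphism, for all $x$, so $R^1\pi_*({\cal E})$ commutes with base change everywhere and $(R^1\pi_*({\cal E}))(x)\cong H^1({\cal E}_x)$. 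Running the same criterion once more at level $i=1$, $R^1\pi_*({\cal E})$ is locally free near $x$ if and only if $\phi^0(x)$ is surjective, i.e. (as $\pi_*({\cal E})=0$) if and only if $h^0({\cal E}_x)=0$, i.e. if and only if $x\notin\BN_\pi({\cal E})$. Since $\chi({\cal E}_x)$ and $h^2({\cal E}_x)$ are constant and $h^0({\cal E}_x)=0$ off $\BN_\pi({\cal E})$, the rank of $R^1\pi_*({\cal E})$ on the connected dense locus $B\setminus\BN_\pi({\cal E})$ is the locally constant value $h^1({\cal E}_x)$, which therefore equals the generic rank $k$.

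It remains to promote this to a global isomorphism, and here the codimension hypothesis on $\BN_\pi({\cal E})$ does the work. On $B\setminus\BN_\pi({\cal E})$ the morphism $\sigma$ is a map of locally free sheaves of rank $k$ which is injective on each fibre by hypothesis, hence an isomorphism there. Its kernel is then supported on $\BN_\pi({\cal E})$, so it is a torsion subsheaf of the torsion free sheaf ${\cal O}_B^{\oplus k}$ and vanishes; thus $\sigma$ is injective with cokernel ${\cal Q}$ supported on $\BN_\pi({\cal E})$, a set of codimension $\geq 2$. Applying ${\cal H}om(-,{\cal O}_B)$ to $0\to{\cal O}_B^{\oplus k}\to R^1\pi_*({\cal E})\to{\cal Q}\to 0$ and using that ${\cal H}om({\cal Q},{\cal O}_B)={\cal E}xt^1({\cal Q},{\cal O}_B)=0$ (because $B$ is smooth and $\supp({\cal Q})$ has codimension $\geq 2$) yields that $\sigma^\vee\colon (R^1\pi_*({\cal E}))^\vee\to{\cal O}_B^{\oplus k}$ is an isomorphism. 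Since $R^1\pi_*({\cal E})$ is torsion free it embeds in its bidual, and the composite ${\cal O}_B^{\oplus k}\hookrightarrow R^1\pi_*({\cal E})\hookrightarrow (R^1\pi_*({\cal E}))^{\vee\vee}\cong{\cal O}_B^{\oplus k}$ equals the isomorphism $\sigma^{\vee\vee}$; this forces the second embedding to be surjective and $\sigma$ itself to be an isomorphism. (Equivalently, one extends $\sigma^{-1}$ across the codimension-$\geq 2$ set by the second Riemann extension theorem for the reflexive sheaf $(R^1\pi_*({\cal E}))^\vee$.) With $\sigma$ an isomorphism, assertion (2) and the final clause of assertion (1) follow as explained above.

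The step I expect to be the main obstacle is the base change cascade of the second paragraph: everything after it is formal homological algebra over a smooth base, but turning the single numerical hypothesis that $h^2({\cal E}_x)$ is constant into local freeness of $R^1\pi_*({\cal E})$ on $B\setminus\BN_\pi({\cal E})$, together with the fibrewise identification $(R^1\pi_*({\cal E}))(x)\cong H^1({\cal E}_x)$, is exactly what makes the linear-independence hypothesis on $s$ usable and, ultimately, incompatible with a non-empty Brill--Noether locus.
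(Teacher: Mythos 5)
Your proof is correct and follows essentially the same route as the paper's: constancy of $x\mapsto h^2({\cal E}_x)$ gives base change and local freeness of $R^1\pi_*({\cal E})$ on $B\setminus\BN_\pi({\cal E})$, $\sigma$ is a fibrewise (hence sheaf) isomorphism there, and the codimension-$\geq 2$ hypothesis promotes this to a global isomorphism via reflexivity, after which statement (2) falls out of Riemann--Roch exactly as you say. Your closing step --- dualizing $0\to{\cal O}_B^{\oplus k}\to R^1\pi_*({\cal E})\to{\cal Q}\to 0$ and using ${\cal H}om({\cal Q},{\cal O}_B)={\cal E}xt^1({\cal Q},{\cal O}_B)=0$ for ${\cal Q}$ supported in codimension $\geq 2$ --- is only a cosmetic variant of the paper's argument, which instead extends the inverse of $c\circ\sigma$ across $\BN_\pi({\cal E})$ as a section of the reflexive sheaf $\left\{{\cal F}^\vee\right\}^{\oplus k}$, an equivalence you already note in passing.
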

Therefore, under the assumptions of the theorem,    the first direct image $R^1\pi_*({\cal E})$ is free if this sheaf admits a system $s$ of $k:=\rk(R^1\pi_*{(\cal E}))$ sections  which are linearly independent on the open set $B\setminus \BN_\pi({\cal E})$ (on which $R^1\pi_*({\cal E})$ is locally free).\\

\begin{proof}    Since the map $B\ni x\mapsto h^2({\cal E}_x)\in\N$ is constant, it follows by Grauert's theorems that $R^2\pi_*({\cal E})$ is locally free and that $R^2\pi_*({\cal E})$, $R^1\pi_*({\cal E})$ commute with base changes (\cite{BS} Theorem 3.4 p. 116). Therefore the canonical morphisms  $(R^i\pi_*({\cal E}))(x)\to H^i({\cal E}_x)$ are isomorphisms for $i=1$, 2, and for every $x\in B$. 

By Riemann-Roch theorem and the second assumption it follows that the map $B\ni x\mapsto h^1({\cal E}_x)$ is constant on $B\setminus  \BN_\pi({\cal E})$, hence the sheaf ${\cal F}:=R^1\pi_*({\cal E})$ (which is torsion free by Proposition \ref{tf}) is locally free on this open subset.
We know by hypothesis that the  morphism $\sigma:{\cal O}_B^{\oplus k}\to {\cal F}$ defined by $s$ is  a bundle isomorphism on $U:=B\setminus  \BN_\pi({\cal E})$.   The canonical embedding $c:{\cal F}\to({\cal F}^\vee)^\vee$  of ${\cal F}$ in its bidual sheaf is also an isomorphism on $U$, so the composition $c\circ\sigma$ has this property too. The inverse $\tau:=\left\{\resto{c\circ\sigma}{U}\right\}^{-1}$ can be regarded as a section of $\Hom(({\cal F}^\vee)^\vee, {\cal O}_B^{\oplus k})=\left\{{\cal F}^\vee\right\}^{\oplus k}$ defined on $U$.  Since $\left\{{\cal F}^\vee\right\}^{\oplus k}$ is a reflexive sheaf  and $U$ is the complement of a Zariski closed subset of codimension $\geq 2$,  it follows that $\tau$ extends to a global morphism  $\tilde\tau: ({\cal F}^\vee)^\vee\to {\cal O}_B^{\oplus k}$. We have 
$$\tilde\tau\circ (c\circ\sigma)=\id_{{\cal O}_B^{\oplus k}}\ ,\ (c\circ\sigma)\circ \tilde\tau=\id_{({\cal F}^\vee)^\vee}$$
 because these equalities hold on $U$. This shows that $c\circ\sigma$ is an isomorphism, in particular $\sigma$ is a monomorphism and  $c$ is an epimorphism. Since $c$ is also a monomorphism,  it follows that $c$ is an isomorphism, so $\sigma=c^{-1}\circ(c\circ\sigma)$ will also be an isomorphism. 
 
 For the second statement, recall that  the canonical map $(R^1\pi_*({\cal E}))(x)\to H^1({\cal E}_x)$ is  an isomorphism for every $x\in B$, hence the map $x\mapsto h^1({\cal E}_x)$ is constant on $B$ (and coincides with $k$). Using the second assumption of the hypothesis and the Riemann-Roch theorem, we conclude that the map $x\mapsto h^0({\cal E}_x)$ is constant on $B$, hence it vanishes identically.
\end{proof}

Proposition \ref{free} leads naturally to the question: if one assumes that the system $s$ is {\it  generically} linearly independent and  the Brill-Noether locus $\BN_\pi({\cal E})$ is non-empty, what can be said about the set of points where $s$ is linearly dependent? The answer is:
\begin{pr}\label{forGeorges}
Let $\pi:M\to B$ be a proper, surjective holomorphic submersion with connected base $B$, connected surfaces as fibers, and ${\cal E}$ a holomorphic bundle on  $M$  such that 
\begin{enumerate}
\item The Brill-Noether locus $\BN_\pi({\cal E})$  
has codimension $\geq 2$ at every point. 
\item The map $B\ni x\mapsto h^2({\cal E}_x)\in\N$ is constant. 
\end{enumerate}
Put $k:=\rk(R^1\pi_*({\cal E}))$ and assume that
$$s=(s_1,\dots,s_k)\in H^0(B,R^1\pi_*({\cal E}))^{\oplus k}$$
is a system of sections such that $s(x_0)$ is linearly independent in the fiber $(R^1\pi_*({\cal E}))(x_0)$ for a point $x_0\in B\setminus \BN_\pi({\cal E})$. Put
 $${\cal S}:=\{x\in B\setminus\BN_\pi({\cal E})|\ s(x)\hbox{ is linearly dependent in }(R^1\pi_*({\cal E}))(x)\}\ .
 $$
 Then the closure $\bar {\cal S}$ of ${\cal S}$ is an effective divisor containing $\BN_\pi({\cal E})$. In particular this divisor is non-empty if $\BN_\pi({\cal E})$ is non-empty.
\end{pr}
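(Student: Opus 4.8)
The plan is to exhibit $\bar{\cal S}$ as the zero divisor of a determinant section. Write ${\cal F}:=R^1\pi_*({\cal E})$. By Corollary \ref{tf} (using assumption (1)) the sheaf ${\cal F}$ is torsion free, and exactly as established in the proof of Proposition \ref{free} (Grauert base change from assumption (2), together with Riemann--Roch) the sheaf ${\cal F}$ is locally free of rank $k$ on the open set $U:=B\setminus\BN_\pi({\cal E})$, whose complement has codimension $\geq 2$. I would then form the determinant ${\cal L}:=(\Lambda^k{\cal F})^{\vee\vee}$; since $B$ is smooth this reflexive rank-one sheaf is invertible, and it restricts to $\Lambda^k({\cal F}|_U)=\det({\cal F}|_U)$ on $U$. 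The system $s$ defines $\sigma:{\cal O}_B^{\oplus k}\to{\cal F}$, and $\Lambda^k\sigma$ composed with the canonical map to the bidual yields a global section $\delta:=\det(\sigma)\in H^0(B,{\cal L})$.

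Next I would identify ${\cal S}$ with the zero locus of $\delta$ inside $U$. On $U$ the sheaf ${\cal L}|_U=\det({\cal F}|_U)$ is the honest determinant, so $\delta|_U$ vanishes at $x$ precisely when $s_1(x),\dots,s_k(x)$ are linearly dependent in the $k$-dimensional fibre ${\cal F}(x)$; thus $Z(\delta)\cap U={\cal S}$. Since $\delta(x_0)\neq 0$ by the hypothesis at $x_0$ and $B$ is connected, $\delta$ is not identically zero, so $Z(\delta)$ is an effective divisor. Because each irreducible component of $Z(\delta)$ has codimension $1$ while $\BN_\pi({\cal E})$ has codimension $\geq 2$, no component of $Z(\delta)$ is contained in $\BN_\pi({\cal E})$; hence $Z(\delta)\cap U$ is dense in $Z(\delta)$, and therefore $\bar{\cal S}=Z(\delta)$ is an effective divisor.

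The substantive point, and the step I expect to be the main obstacle, is the inclusion $\BN_\pi({\cal E})\subseteq Z(\delta)$: a pure codimension-one divisor must contain the higher-codimension Brill--Noether locus. I would prove this by contradiction, localizing and invoking the rigidity of Proposition \ref{free}. If some $x_1\in\BN_\pi({\cal E})$ were not in $Z(\delta)$, then on a connected neighborhood $U_1$ of $x_1$ disjoint from $Z(\delta)$ the section $\delta$ would be nowhere zero, so $s(x)$ would be linearly independent for every $x\in U_1\setminus\BN_\pi({\cal E})$. Restricting the family to $\pi^{-1}(U_1)\to U_1$, all hypotheses of Proposition \ref{free} hold (properness and connected-surface fibres are inherited, assumptions (1)--(2) localize, and the rank of $R^1\pi_*({\cal E})|_{U_1}$ is still $k$), so that proposition forces $\BN_\pi({\cal E})\cap U_1=\emptyset$, contradicting $x_1\in\BN_\pi({\cal E})\cap U_1$. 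This gives $\BN_\pi({\cal E})\subseteq Z(\delta)=\bar{\cal S}$, and the final ``in particular'' assertion follows at once, since then $\bar{\cal S}\supseteq\BN_\pi({\cal E})\neq\emptyset$.
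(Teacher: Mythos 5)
Your proposal is correct and follows essentially the same route as the paper's own proof: both form the determinant section of $s_1\wedge\dots\wedge s_k$ in the invertible bidual $\{\wedge^k({\cal F})^\vee\}^\vee$, identify its vanishing divisor with $\bar{\cal S}$ using that $\BN_\pi({\cal E})$ has codimension $\geq 2$, and obtain the inclusion $\BN_\pi({\cal E})\subset\bar{\cal S}$ by applying Proposition \ref{free} on a neighborhood disjoint from the divisor. The only cosmetic difference is that you phrase this last step as a contradiction while the paper argues directly via the contrapositive $B\setminus D\subset B\setminus\BN_\pi({\cal E})$.
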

\begin{proof}
Using the notations introduced in the proof of Proposition \ref{free}, regard the wedge product $\wedge\sigma:=s_1\wedge\dots\wedge s_k$ as a section in $\wedge^k({\cal F})$. Let %
$$d:\wedge^k({\cal F})\to \{\wedge^k({\cal F})^\vee\}^\vee=\det({\cal F})$$
 be the canonical embedding (see Proposition 6.10 ch. V in \cite{K}) and $D$ the vanishing divisor of the section $d\circ (\wedge\sigma)$ of this holomorphic line bundle. Note that $D\cap (B\setminus\BN_\pi({\cal E}))={\cal S}$ which (taken into account that $\BN_\pi({\cal E})$ has codimension $\geq 2$ at every point) implies $\bar {\cal S}=D$. In order to prove that $\BN_\pi({\cal E})\subset D$ we will show that $B\setminus D\subset B\setminus \BN_\pi({\cal E})$. Let  $x_0\in B\setminus D$ and let $U$ be an open neighborhood of $x_0$ such that $U\cap D=\emptyset$. This implies $U\cap  {\cal S}=\emptyset$, hence $\resto{s}{U}$ satisfies the hypothesis of Proposition \ref{free}, which gives $\BN_\pi({\cal E})\cap U=\emptyset$, hence $x\not\in \BN_\pi({\cal E})$.

\end{proof}

\begin{re}
Note that, in general, for a point $x_0\in \BN_\pi({\cal E})$, the system $s(x_0)$ can be linearly independent in $(R^1\pi_*({\cal E}))(x_0)$ although, by Proposition \ref{forGeorges},   such a point belongs to the closure  of the set ${\cal S}$ of points in  $B\setminus\BN_\pi({\cal E})$ where $s(x)$ is linearly dependent. This shows that, in general, for a system of sections in a torsion free coherent sheaf, being fiberwise linearly dependent is not always a closed condition.
\end{re}
{\ }\\
{\bf Example:} Let ${\cal F}$ be the ideal sheaf ${\cal I}_0$ of the origin in $\C^2$, and let $s\in H^0({\cal I}_0)$ be the section defined by the holomorphic function $(z_1,z_2)\mapsto z_1$. Then $s(0)$ is non-zero in the fiber ${\cal I}_0(0)$, although $s(z)$ vanishes in ${\cal I}_0(z)$ for every point $z\in \{0\}\times\C^*$. \\

In section \ref{App} we will see that Proposition \ref{forGeorges} has found interesting applications in studying families of class VII surfaces \cite{D2}.

\section{The sheaf  $\Tors(R^1\pi_*({\cal E}))$ as inductive limit.}\label{inductive}

Let again $B$, $M$ be complex manifolds, $\pi:M\to B$ a proper, surjective holomorphic submersion, and let ${\cal E}$ be a holomorphic bundle on  $M$. For an effective divisor $D\subset B$ we put ${\cal D}:=\pi^{-1}(D)$, which is an effective divisor of $M$.   The restriction ${\cal E}_{\cal D}$ will be alternatively regarded either as a   locally free sheaf on ${\cal D}$, or as a torsion coherent sheaf on $M$.

Recall that for a decomposition $D=D'+D''$ of an effective divisor $D$ of $B$ as sum of effective divisors we have an associated {\it decomposition exact sequence}
\begin{equation}\label{dec-ex-seq}
0\to{\cal O}_{D'}(-D'')\to {\cal O}_D\to{\cal O}_{D''}\to 0
\end{equation}
induced by the restriction morphism $r_{DD''}:{\cal O}_D\to{\cal O}_{D''}$ and the obvious isomorphisms 
$$\ker(r_{DD''})=\qmod{{\cal I}_{D''}}{{\cal I}_D}=\qmod{{\cal I}_{D''}}{{\cal I}_{D'}{\cal I}_{D''}}=\qmod{{\cal O}(-D'')}{ {\cal I}_{D'}{\cal O}(-D'')}={\cal O}_{D'}(-D'')\ .
$$

\begin{lm}\label{divisor} Let $D=\sum_{i\in I} n_i D_i$ be an  effective  divisor of $B$ decomposed as sum of irreducible components, such that  $D_i\not\subset \BN_\pi({\cal E})$ for every $i\in I$. Then $H^0({\cal E}_{{\cal D}})= 0$.
\end{lm}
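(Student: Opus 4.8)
The plan is to prove the vanishing by induction on the total multiplicity $N:=\sum_{i\in I} n_i$, and to make the induction close I will prove the slightly stronger statement that $H^0\bigl(({\cal E}\otimes\pi^*\lambda)_{\cal D}\bigr)=0$ for \emph{every} line bundle $\lambda$ on $B$, where ${\cal D}=\pi^{-1}(D)$. The observation that makes the twist harmless is the identity ${\cal O}_M(-\pi^{-1}(D'))=\pi^*({\cal O}_B(-D'))$: a line bundle pulled back from the base restricts trivially to every fiber $M_x$, so for ${\cal F}:={\cal E}\otimes\pi^*\lambda$ one has ${\cal F}_x\cong{\cal E}_x$ and hence $\BN_\pi({\cal F})=\BN_\pi({\cal E})$. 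In particular no component $D_i$ of $D$ is contained in $\BN_\pi({\cal F})$.

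For the base case $N=1$ the divisor $D=D_{i_0}$ is reduced and irreducible, so ${\cal D}=\pi^{-1}(D_{i_0})$ is reduced, the pullback of a reduced divisor under a submersion being reduced. Since $D_{i_0}\not\subset\BN_\pi({\cal F})$ and $D_{i_0}$ is irreducible, the analytic set $\BN_\pi({\cal F})\cap D_{i_0}$ is a proper, hence nowhere dense, subset of $D_{i_0}$; equivalently it has codimension $\geq 2$ in $B$. Thus $D_{i_0}$ satisfies the hypotheses of Lemma \ref{red} for the bundle ${\cal F}$, and the argument given there applies verbatim: any $s\in H^0({\cal D},{\cal F}|_{\cal D})$ vanishes on every fiber $M_x$ with $x\in D_{i_0}\setminus\BN_\pi({\cal F})$, so its zero locus contains the dense open set $\pi^{-1}(D_{i_0}\setminus\BN_\pi({\cal F}))$, and reducedness of ${\cal D}$ forces $s=0$.

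For the inductive step ($N\geq 2$) fix one irreducible component $D_{i_0}$ occurring in $D$ and write $D=D_{i_0}+D'$ with $D'$ of total multiplicity $N-1$; the components of $D'$ lie among the $D_i$ and hence still avoid $\BN_\pi({\cal E})$. The decomposition exact sequence (\ref{dec-ex-seq}) reads $0\to{\cal O}_{D_{i_0}}(-D')\to{\cal O}_D\to{\cal O}_{D'}\to 0$; pulling it back by the flat submersion $\pi$ and tensoring with the locally free sheaf ${\cal F}={\cal E}\otimes\pi^*\lambda$ yields $0\to{\cal F}_{{\cal D}_{i_0}}(-{\cal D}')\to{\cal F}_{\cal D}\to{\cal F}_{{\cal D}'}\to 0$, where ${\cal D}_{i_0}=\pi^{-1}(D_{i_0})$ and ${\cal D}'=\pi^{-1}(D')$. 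In the associated long exact cohomology sequence the right-hand term $H^0({\cal F}_{{\cal D}'})$ vanishes by the inductive hypothesis (same $\lambda$, multiplicity $N-1$), while the left-hand term equals $H^0\bigl(({\cal E}\otimes\pi^*(\lambda\otimes{\cal O}_B(-D')))_{{\cal D}_{i_0}}\bigr)$, since ${\cal O}_M(-{\cal D}')=\pi^*{\cal O}_B(-D')$; this vanishes by the base case applied to the line bundle $\lambda\otimes{\cal O}_B(-D')$. Hence $H^0({\cal F}_{\cal D})=0$, and taking $\lambda={\cal O}_B$ gives the Lemma.

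The only genuinely delicate points are bookkeeping rather than conceptual. First, one must check that ${\cal O}_M(-\pi^{-1}(D'))$ is indeed the pullback $\pi^*{\cal O}_B(-D')$ and hence fiberwise trivial; this is exactly what keeps the Brill--Noether locus fixed under the twist, and is the reason the stronger twisted statement, rather than the bare one, is the correct object to induct on. Second, one needs that the pullback under a submersion of a reduced divisor remains reduced, which licenses the pointwise vanishing argument of the base case; this is local and follows by writing $\pi$ as a projection and pulling back a squarefree local equation.
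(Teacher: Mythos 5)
Your proof is correct and follows essentially the same route as the paper's: induction on the total multiplicity $\sum_i n_i$ via the decomposition exact sequence $0\to{\cal E}_{{\cal D}_{i_0}}(-{\cal D}'')\to{\cal E}_{\cal D}\to{\cal E}_{{\cal D}''}\to 0$, together with the observation that twisting by a pullback $\pi^*{\cal O}_B(-D'')$ leaves the Brill--Noether locus unchanged, and a base case resting on reducedness/irreducibility of $\pi^{-1}(D_{i_0})$ plus fiberwise vanishing over a dense set. The only cosmetic difference is that you carry an arbitrary line bundle $\lambda$ through the induction to make it formally close, whereas the paper applies the untwisted inductive hypothesis to ${\cal E}_{{\cal D}''}$ and the base case directly to the twisted bundle ${\cal E}(-{\cal D}'')$ --- logically the same argument.
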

\begin{proof} The statement is clear when $D$ is irreducible, because a section    in a holomorphic bundle over an irreducible space vanishes if it vanishes generically (as a map with values in the total space of the bundle). On the other hand  if $D\not\subset \BN_\pi({\cal E})$, any section of ${\cal E}_{\cal D}$ (regarded as holomorphic bundle over ${\cal D}$) will vanish   on the non-empty Zariski open subset ${\cal D}\setminus\pi^{-1}(\BN_\pi({\cal E}))$ of ${\cal D}$. 

For $D$ reducible we use induction with respect to $n:=\sum_{i\in I} n_i$. We choose $i_0\in I$, we put $D'':=D-D_{i_0}$,   and we use the short exact sequence of sheaves on $M$:
$$0\to {\cal E}_{ {\cal D}_{i_0}}(-{\cal D}'')\to  {\cal E}_{{\cal D}}\to {\cal E}_{ {\cal D}''}\to 0\ .
$$
Noting that ${\cal E}(-{\cal D}'')= {\cal E}\otimes\pi^*({\cal O}_B(-D''))$, we see that the two bundles have the same Brill-Noether locus over $B$, hence $H^0({\cal E}_{\cal D_{i_0}}(-{\cal D}''))=0$, by the first step applied to the bundle ${\cal E}(-{\cal D}'')$ on $M$ and the irreducible divisor $D_{i_0}$. On the other hand  $H^0({\cal E}_{ {\cal D}''})=0$ by the induction assumption.
\end{proof}
For an effective divisor $D\subset B$ we put
$$R_D^\pi:=\pi_*({\cal E}_{\cal D})(D)=\pi_*({\cal E}_{\cal D}({\cal D}))\in{\cal C}oh(B)\ . 
$$

\begin{dt} We denote by ${\cal D}iv(B)$ the small category associated with the ordered set of effective divisors of $B$. The morphisms in this category correspond to inclusions of effective divisors (regarded as complex subspaces of $B$).
\end{dt}
\begin{pr}\label{inBN} The assignment $D\mapsto R_D^\pi$ defines a functor 
$$R^\pi:{\cal D}iv(B)\to {\cal C}oh(B)$$
 with the following properties.
\begin{enumerate}
\item The support of $R_D^\pi$ is contained in $D$, in particular $R_D^\pi$ is a torsion sheaf.
\item For $D'\leq D$ the corresponding morphism $R_{D'D}^\pi:R_{D'}^\pi\to R_{D}^\pi$ is a sheaf monomorphism.
\item  $R_D$ depends only on the part of $D$ which is contained in $\BN_\pi({\cal E})$, more precisely consider the decomposition  $D=\sum_{i\in I} n_i D_i$ of $D$ in irreducible  components, let $I_0\subset I$ be the subset of indices $i$ for which $D_i\subset \BN_\pi({\cal E})$, and  put $D^0:=\sum_{i\in I_0} n_i D_i$. Then $R_{D^0D}^\pi$ is an isomorphism.
\end{enumerate}
\end{pr}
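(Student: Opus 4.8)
The plan is to deduce all three properties, together with the functoriality, from a single twisted form of the decomposition exact sequence (\ref{dec-ex-seq}). Given $D'\leq D$, I write $D=D'+D''$ with $D''$ effective; pulling back by $\pi$ gives ${\cal D}={\cal D}'+{\cal D}''$ in $M$, so the decomposition exact sequence (\ref{dec-ex-seq}) for this decomposition of ${\cal D}$ reads $0\to{\cal O}_{{\cal D}'}(-{\cal D}'')\to{\cal O}_{\cal D}\to{\cal O}_{{\cal D}''}\to 0$. I would tensor it with the locally free sheaf ${\cal E}\otimes{\cal O}_M({\cal D})$ and use the identity ${\cal O}_{{\cal D}'}(-{\cal D}'')\otimes{\cal O}_M({\cal D})={\cal O}_{{\cal D}'}({\cal D}')$ to obtain the short exact sequence
\[
0\to {\cal E}_{{\cal D}'}({\cal D}')\to {\cal E}_{\cal D}({\cal D})\to {\cal E}_{{\cal D}''}({\cal D})\to 0
\]
of sheaves on $M$. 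All three assertions will be read off from this one sequence.

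I would then define $R^\pi_{D'D}$ to be the image under $\pi_*$ of its first arrow. Concretely that arrow is induced by multiplication with a local equation of ${\cal D}''$ (equivalently, it is the map on cokernels in the evident morphism from the tautological sequence of ${\cal D}'$ to that of ${\cal D}$, built on the inclusion ${\cal E}({\cal D}')\hookrightarrow{\cal E}({\cal D})$). Since such multiplication maps compose multiplicatively under $D''\leq D'\leq D$, the relation $R^\pi_{D'D}\circ R^\pi_{D''D'}=R^\pi_{D''D}$ is immediate and the identity is clear, so $R^\pi$ is indeed a functor. Property (2) follows at once: $\pi_*$ is left exact, hence applying it to the displayed sequence makes $R^\pi_{D'}=\pi_*({\cal E}_{{\cal D}'}({\cal D}'))\to\pi_*({\cal E}_{\cal D}({\cal D}))=R^\pi_D$ a monomorphism.

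For property (1) I would argue exactly as in (\ref{J}): writing ${\cal E}_{\cal D}({\cal D})=J_*({\cal G})$ for the inclusion $J:{\cal D}\hookrightarrow M$ and the corresponding sheaf ${\cal G}=\resto{({\cal E}({\cal D}))}{{\cal D}}$ on ${\cal D}$, the identity $\pi\circ J=j\circ\resto{\pi}{{\cal D}}$ (with $j:D\hookrightarrow B$) gives $R^\pi_D=j_*\big((\resto{\pi}{{\cal D}})_*{\cal G}\big)$. Hence $\supp(R^\pi_D)\subset D$, and since $D$ is nowhere dense, $R^\pi_D$ is a torsion sheaf.

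Property (3) is the main point. I put $D^1:=D-D^0=\sum_{i\in I\setminus I_0}n_iD_i$, so that every irreducible component of $D^1$ avoids $\BN_\pi({\cal E})$. Taking the displayed sequence with $D'=D^0$, $D''=D^1$ and applying $\pi_*$ yields an exact sequence $0\to R^\pi_{D^0}\to R^\pi_D\to\pi_*({\cal E}_{{\cal D}^1}({\cal D}))$, so it suffices to prove $\pi_*({\cal E}_{{\cal D}^1}({\cal D}))=0$. Here I would invoke Lemma \ref{divisor} for the bundle ${\cal E}({\cal D})={\cal E}\otimes\pi^*({\cal O}_B(D))$: since ${\cal O}_M({\cal D})$ is pulled back from $B$ it is fiberwise trivial, whence $\BN_\pi({\cal E}({\cal D}))=\BN_\pi({\cal E})$, and the components of $D^1$ miss this locus. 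The one delicate point — and the main obstacle — is that Lemma \ref{divisor} only gives vanishing of \emph{global} sections, whereas I need the direct image \emph{sheaf} to vanish. I would overcome this by running the lemma over every open $V\subset B$: the Brill--Noether locus is analytic, each $D_i$ with $i\notin I_0$ meets the dense subset $D_i\setminus\BN_\pi({\cal E})$, so $D_i\cap V$ is again not contained in $\BN_\pi({\cal E})$, and the lemma applies verbatim to $\resto{\pi}{\pi^{-1}(V)}$. This gives $\pi_*({\cal E}_{{\cal D}^1}({\cal D}))(V)=0$ for every $V$, hence $R^\pi_{D^0D}$ is an isomorphism.
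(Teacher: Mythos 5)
Your proof is correct and follows essentially the same route as the paper: the same decomposition exact sequence twisted by ${\cal E}({\cal D})$, left exactness of $\pi_*$ for (2), the pushforward factorization through $j_*$ for (1), and Lemma \ref{divisor} applied to ${\cal E}({\cal D})$ (with $\BN_\pi({\cal E}({\cal D}))=\BN_\pi({\cal E})$) over every open $V\subset B$ for (3). Your explicit remark that the lemma must be run over all opens to get vanishing of the direct image sheaf, not just of global sections, is exactly the step the paper records with the phrase ``for every open set $U\subset B$''.
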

\begin{proof} The first statement is obvious. For the second, putting 
$D'':=D-D'$ and tensorizing with ${\cal E}({\cal D})$   the  exact sequence  %
$$0\to{\cal O}_{{\cal D}'}(-{\cal D}'')\to {\cal O}_{{\cal D}}\to{\cal O}_{{\cal D}''}\to 0\ ,
$$
associated with the decomposition $D=D'+D''$, we obtain the short exact sequence
$$0\to {\cal E}_{\cal D'}({\cal D'})\to {\cal E}_{\cal D}({\cal D})\to {\cal E}_{\cal D''}({\cal D})\to 0\ .
$$
The injectivity of $R_{D'D}^\pi$  follows applying the left exact functor $\pi_*$ to this short exact sequence.

To prove the third statement we put $D'':=D-D^0$, and we use the short exact sequence
$$0\to {\cal E}_{\cal D^0}({\cal D^0})\to {\cal E}_{\cal D}({\cal D})\to {\cal E}_{\cal D''}({\cal D})\to 0 
$$
obtained similarly. On the other hand, using Lemma \ref{divisor} we obtain 
$$H^0(\pi^{-1}(U),{\cal E}_{\cal D''}({\cal D}))=0$$
for every open set $U\subset B$, because no irreducible component of ${\cal D''}\cap U$ is contained in  $\BN_\pi({\cal E}({\cal D}))$.
\end{proof}
\begin{co} The natural morphism
$$\varinjlim_{\substack{D\in{\cal D}iv(B) \\  |D|\subset \BN_\pi({\cal E})}} R^\pi_D\to \varinjlim_{D\in{\cal D}iv(B)} R^\pi_D$$
is an isomorphism.
\end{co}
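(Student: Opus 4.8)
The plan is to read the statement off directly from Proposition \ref{inBN}, since the corollary is essentially a repackaging of its parts (2) and (3). Write $\mathcal{D}iv_{\BN}(B)\subset{\cal D}iv(B)$ for the full subcategory of those effective divisors $D$ with $|D|\subset\BN_\pi({\cal E})$; the natural morphism in the statement is the one induced on colimits by this inclusion of index categories. Both colimits are filtered: the poset of effective divisors is directed, since $D_1,D_2\leq D_1+D_2$, and the subposet $\mathcal{D}iv_{\BN}(B)$ is directed for the same reason (the sum of two divisors supported in $\BN_\pi({\cal E})$ is again supported there). I would compute these filtered colimits of coherent sheaves stalkwise, using that the stalk functor commutes with colimits and that filtered colimits are exact; this reduces each assertion below to an elementary statement about filtered colimits of modules.

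For injectivity I would invoke Proposition \ref{inBN}(2). Since every transition morphism $R_{D'D}^\pi$ is a monomorphism, exactness of filtered colimits shows that each canonical cocone map out of $R_{D'}^\pi$ into either colimit is injective: a class killed in the colimit is already killed by some transition morphism, hence is zero. Consequently, if a class in $\varinjlim_{\mathcal{D}iv_{\BN}(B)}R_D^\pi$ maps to $0$ in $\varinjlim_{{\cal D}iv(B)}R_D^\pi$, then choosing a representative $u\in R_{D'}^\pi$ with $D'\in\mathcal{D}iv_{\BN}(B)$, some transition morphism sends $u$ to $0$; monomorphicity forces $u=0$, so the class was already $0$. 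This yields injectivity of the natural morphism.

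For surjectivity the essential input is Proposition \ref{inBN}(3). Given an arbitrary $D\in{\cal D}iv(B)$, let $D^0\leq D$ be its part supported in $\BN_\pi({\cal E})$, so that $D^0\in\mathcal{D}iv_{\BN}(B)$ and $R_{D^0D}^\pi\colon R_{D^0}^\pi\to R_D^\pi$ is an isomorphism. Writing $\lambda_D$ for the structural map of $R_D^\pi$ into the full colimit, compatibility of the cocone gives $\lambda_D\circ R_{D^0D}^\pi=\lambda_{D^0}$, whence $\lambda_D=\lambda_{D^0}\circ(R_{D^0D}^\pi)^{-1}$ and thus $\im(\lambda_D)=\im(\lambda_{D^0})$. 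The right-hand image factors through the image of the sub-colimit, since $D^0\in\mathcal{D}iv_{\BN}(B)$. As the full filtered colimit is the union of the images of the $\lambda_D$, the natural morphism is surjective, hence an isomorphism.

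I do not expect a genuine obstacle here; the only points needing care are formal. One should confirm that the map is indeed the canonical one induced by the inclusion of index categories, equivalently that the identification above is compatible with the cocones. If one prefers a slicker formulation, one checks instead that $D\mapsto D^0$ is an order-preserving retraction of ${\cal D}iv(B)$ onto $\mathcal{D}iv_{\BN}(B)$ and is cofinal as a functor to the subposet, so that it induces an isomorphism of colimits; monotonicity and cofinality both follow from directedness together with Proposition \ref{inBN}(3). In either presentation the substantive content has already been placed in Proposition \ref{inBN}, and what remains is bookkeeping about filtered colimits.
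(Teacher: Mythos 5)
Your proof is correct and follows exactly the route the paper intends: the corollary is stated without proof as an immediate consequence of Proposition \ref{inBN}, with injectivity coming from the monomorphicity of the transition maps in part (2) and surjectivity from the isomorphisms $R_{D^0D}^\pi$ in part (3), just as you argue. The filtered-colimit bookkeeping you supply is the standard argument and contains no gaps.
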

\begin{thry}\label{indlim}  Let $\pi:M\to B$ be a proper holomorphic submersion with connected fibers, and ${\cal E}$  a  locally free coherent sheaf  on $M$ such that $\pi_*({\cal E})=0$. There exists a canonical isomorphism
$$\varinjlim_{\substack{D\in{\cal D}iv(B) \\  |D|\subset \BN_\pi({\cal E})}} R^\pi_D=\varinjlim_{D\in{\cal D}iv(B)} R^\pi_D\textmap{\simeq \ \psi} \Tors(R^1\pi_*({\cal E}))\ .
$$
In particular, the inductive limit on the left is a coherent sheaf supported on the divisorial part $D_\pi({\cal E})$ of $\BN_\pi({\cal E})$.
\end{thry}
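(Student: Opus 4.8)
My plan is to construct the comparison morphism $\psi$ fibrewise from the connecting homomorphisms of the tautological sequences, and then to check injectivity (which will be essentially formal, using $\pi_*({\cal E})=0$) and surjectivity (the genuine geometric point, where Theorem \ref{supp} must enter).

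\textbf{Construction of $\psi_D$.} For a fixed effective divisor $D\subset B$, since ${\cal D}=\pi^{-1}(D)$ is a pull-back divisor one has ${\cal O}_M(-{\cal D})=\pi^*{\cal O}_B(-D)$, so tensoring the ideal-sheaf sequence of ${\cal D}$ by ${\cal E}$ yields the short exact sequence $0\to {\cal E}\otimes\pi^*{\cal O}_B(-D)\to{\cal E}\to{\cal E}_{\cal D}\to 0$ on $M$. Applying $\pi_*$ and using $\pi_*({\cal E})=0$, the connecting map $\delta_D:\pi_*({\cal E}_{\cal D})\to R^1\pi_*({\cal E}\otimes\pi^*{\cal O}_B(-D))$ is injective, because its kernel is the image of $\pi_*({\cal E})=0$. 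The projection formula identifies the target with $R^1\pi_*({\cal E})\otimes{\cal O}_B(-D)$, so after twisting $\delta_D$ by ${\cal O}_B(D)$ I obtain a canonical monomorphism $\psi_D:R_D^\pi=\pi_*({\cal E}_{\cal D})\otimes{\cal O}_B(D)\hookrightarrow R^1\pi_*({\cal E})$. As $R_D^\pi$ is a torsion sheaf (Proposition \ref{inBN}(1)), its image lies in $\Tors(R^1\pi_*({\cal E}))$. This coordinate-free description also explains the twist by ${\cal O}_B(D)$ in the definition of $R_D^\pi$: it is precisely what is needed to land in $R^1\pi_*({\cal E})$ itself rather than in a twist of it.

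\textbf{Passing to the limit and injectivity.} To see that the $\psi_D$ descend to the colimit I must verify $\psi_D\circ R_{D'D}^\pi=\psi_{D'}$ for $D'\le D$. This will follow from the naturality of connecting homomorphisms applied to the evident morphism between the tautological sequences for $D'$ and for $D$ (multiplication by the section cutting out ${\cal D}''={\cal D}-{\cal D}'$); it is a routine diagram chase once one unwinds the identifications of the decomposition sequence (\ref{dec-ex-seq}). The induced $\psi:\varinjlim_D R_D^\pi\to\Tors(R^1\pi_*({\cal E}))$ will then be injective: the stalk of a filtered colimit is the colimit of stalks, each $(\psi_D)_x$ is injective, and $\psi$ restricts to $\psi_D$ along the structure map $R_D^\pi\to\varinjlim$, so any germ mapping to $0$ already vanishes in some $(R_D^\pi)_x$.

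\textbf{Surjectivity.} This is the main point. Fix $x\in B$ and a torsion germ $u\in\Tors(R^1\pi_*({\cal E}))_x$; I must realize $u$ as $\psi_D$ of a germ of $R_D^\pi$ for a suitable \emph{global} effective divisor $D$. By Theorem \ref{supp} the support of $\Tors(R^1\pi_*({\cal E}))$ is the pure $1$-codimensional set $D_\pi({\cal E})\subset\BN_\pi({\cal E})$; since this torsion sheaf is coherent, its annihilator ideal has zero locus $D_\pi({\cal E})$, so by the Rückert Nullstellensatz and Noetherianity of ${\cal O}_{B,x}$ a power of the reduced local equation $g$ of $D_\pi({\cal E})$ at $x$ annihilates $u$, say $g^N u=0$. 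Now $D:=N\cdot D_\pi({\cal E})$ (the multiplicity $N$ on every component) is a bona fide global effective divisor with local equation $g^N$ near $x$, and unwinding the construction of $\psi_D$ identifies $(\psi_D)_x((R_D^\pi)_x)$ with $\ker(m_{g^N})_x=\{v:g^Nv=0\}$; hence $u$ lies in the image. Thus global divisors supported on $D_\pi({\cal E})$ with growing multiplicities are cofinal for surjectivity, $\psi$ is a stalkwise isomorphism, and therefore an isomorphism of sheaves. The final assertions — coherence of the limit and its support on $D_\pi({\cal E})$ — are then inherited from $\Tors(R^1\pi_*({\cal E}))$ via Theorem \ref{supp}. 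The step I expect to be delicate is exactly this surjectivity: reconciling the \emph{local} nature of torsion annihilators with the \emph{global} indexing set ${\cal D}iv(B)$. Without Theorem \ref{supp} identifying the support as a global analytic divisor, it would not be clear that globally defined divisors are cofinal among the local equations killing a given torsion germ; the transition-compatibility chase is conceptually routine but is where twist bookkeeping could go astray.
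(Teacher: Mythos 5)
Your proposal is correct and follows the paper's architecture: define $\psi_D$ via the connecting homomorphism of the tautological sequence of ${\cal D}$ (your untwisted sequence followed by the projection-formula twist is canonically the same as the paper's sequence $0\to{\cal E}\to{\cal E}({\cal D})\to{\cal E}_{\cal D}({\cal D})\to 0$), identify $\im\psi_D$ with the annihilator of ${\cal I}_D$ in $R^1\pi_*({\cal E})$, get compatibility from naturality of $\partial$, and deduce injectivity formally. The one place where you genuinely diverge is surjectivity, and your variant is sound. The paper starts from an arbitrary local annihilator $\varphi$ of the torsion germ, forms the local divisor $D_U=Z(\varphi)$, and then invokes Proposition \ref{inBN}(3) to replace $D_U$ by its part $D_U^0$ supported in $\BN_{\pi_U}({\cal E}_{M_U})$, which localizes to $D_\pi({\cal E})\cap U$ and can therefore be dominated by a global divisor supported on $D_\pi({\cal E})$. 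You instead use Theorem \ref{supp} together with coherence of $\Tors(R^1\pi_*({\cal E}))$ and the Nullstellensatz to conclude that $g^N$ (a power of the reduced local equation of $D_\pi({\cal E})$, principal since ${\cal O}_{B,x}$ is factorial and the support is pure $1$-codimensional) kills the whole torsion stalk, so that the globally defined divisors $N\cdot D_\pi({\cal E})$ already suffice. This buys a cleaner bypass of the local-to-global difficulty you correctly flag, at the cost of using Theorem \ref{supp} as a black box where the paper's argument re-derives the relevant support information from Proposition \ref{inBN}(3); both routes are legitimate since Theorem \ref{supp} is established earlier and independently.
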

\begin{proof} For every $D\in  {\cal D}iv(B)$ we define a morphism $\psi_D:R^\pi_D\to \Tors(R^1\pi_*({\cal E}))$ in the following way: the long exact   sequence associated with the short exact sequence
$$0\to {\cal E}\to {\cal E}({\cal D})\to {\cal E}_{\cal D}({\cal D})\to 0
$$
and the left exact functor $\pi_*$ begins with
$$ 0\to \pi_*({\cal E}_{\cal D}({\cal D}))=R_D^\pi\textmap{\partial_D^\pi} R^1\pi_*({\cal E})\textmap{\alpha_D^\pi}R^1\pi_*({\cal E}({\cal D}))\to \dots
$$
Using the identification $R^1\pi_*({\cal E}({\cal D}))=R^1\pi_*({\cal E})(D)=R^1\pi_*({\cal E})\otimes_{{\cal O}_B}{\cal O}_B(D)$ we see that $\alpha_D^\pi$ is given by multiplication with the canonical morphism $\sigma_D:{\cal O}_B\to {\cal O}_B(D)$. A local equation $\varphi$ of $D$ defined on an open set $U\subset B$ defines an isomorphism ${\cal O}_U(D\cap U)\simeq {\cal O}_U$, and via this isomorphism the restriction 
$$\sigma_D^U:{\cal O}_U\to {\cal O}_U(D\cap U)={\cal O}_U$$
 of $\sigma_D$ to $U$ is given by multiplication with $\varphi$. This shows that $\partial_D^\pi$ defines a monomorphism $\psi_D:R_D^\pi\to \Tors(R^1\pi_*({\cal E}))$ whose image is the annihilator (in $R^1\pi_*({\cal E})$) of the ideal sheaf of  $D$.

For $D'$, $D \in {\cal D}iv(B)$ such that $D'\leq D$ we use the functoriality of the connecting operator $\partial$ with respect to morphisms of short exact sequences,  and we get %
$$\partial_{D}^\pi\circ R_{D'D}^\pi=\partial_{D'}^\pi\ ,$$ 
i.e., $\psi_D\circ R_{D'D}^\pi=\psi_{D'}$. Using the universal property of the inductive limit we see that the system $(\psi_D)_{D\in{\cal D}iv(B)}$ induces a morphism  
$$\psi:\varinjlim_{D\in{\cal D}iv(B)} R^\pi_D\to \Tors(R^1\pi_*({\cal E}))\ ,$$
which is injective, because all morphisms $\psi_D$ are injective. 

The surjectivity of $\psi$ is more delicate: let $x\in B$ and $\chi$ an element in the stalk
$R^1\pi_*({\cal E})_x$ which is a torsion element of this ${\cal O}_{B,x}$-module. Therefore there exists an open neighborhood $U$ of $x$ and a representative $h\in R^1\pi_*({\cal E})(U)$ of $\chi$ and $\varphi\in{\cal O}(U)\setminus\{0\}$ such that $\varphi h=0$ in $R^1\pi_*({\cal E})(U)$. Let $D_U:=Z(\varphi)\in{\cal D}iv(U)$ be the effective divisor defined by $\varphi$ and ${\cal D}_U=Z(\varphi\circ\pi_U)$ its pull-back to $M_U:=\pi^{-1}(U)$ via $\pi_U:=\resto{\pi}{U}$. We have a short exact sequence
$$0\to {\cal E}_{M_U}\textmap{\varphi\circ\pi_U} {\cal E}_{M_U}\map {\cal E}_{{\cal D}_U}={\cal E}_{{\cal D}_U}({\cal D}_U)\to 0\ .
$$
Here we used the trivialization of ${\cal O}_{M_U}({\cal D}_U)$ defined by $\varphi\circ\pi_U$.  The long exact sequence associated with this exact sequence and   the left exact functor $(\pi_U)_*$  contains the segment
$$0\to (\pi_U)_*({\cal E}_{{\cal D}_U}({\cal D}_U))=R_{D_U}^U\stackrel{\partial_{D_U}^U}{\map} R^1\pi_*({\cal E}_{M_U})\stackrel{\alpha_{D_U}^U}{\map}R^1\pi_*({\cal E}_{M_U}({\cal D}_U))=R^1\pi_*({\cal E}_{M_U}) \ ,
$$
in which $\alpha_{D_U}^U$ is given by multiplication  with $\varphi$ in the ${\cal O}_U$-module $R^1\pi_*({\cal E}_{M_U})$. Therefore, replacing $U$ by a smaller open neighborhood of $x$ if necessary, we conclude that $h$ belongs to the image of
$\partial_{D_U}^U$. The problem is that $D_U$ will not necessarily extend to a divisor of $B$, so this does not prove the surjectivity of $\psi$ yet.

On the other hand we know by Proposition \ref{inBN} that $R_{D_U}^U=R_{D_U^0}^U$, where the support of $D_U^0$ is contained in the maximal 1-codimensional analytic subset  $D_{\pi_U}({\cal E}_{M_U})$ of $\BN_{\pi_U}({\cal E}_{M_U})$. The point is that $\BN_{\pi_U}({\cal E}_{M_U})=\BN_{\pi}({\cal E}_{M})\cap U$, so  the similar relation   $D_{\pi_U}({\cal E}_{M_U})=D_{\pi}({\cal E}_{M})\cap U$ holds for the   maximal 1-codimensional analytic subsets of these analytic sets. Therefore there exists a divisor $D$ whose support is contained in $D_{\pi}({\cal E}_{M})$ such that $D\cap U\geq D^0_U$. Note that in general one cannot obtain equality, because different irreducible components of
$D_{\pi_U}({\cal E}_{M_U})$ (which can appear with different multiplicities in the decomposition of $D_U^0$) might belong to the same irreducible component of $D_{\pi}({\cal E}_{M})$.

This shows that $h$ belongs to the image of $\psi_D$, which proves the surjectivity of the morphism $\psi$.
\end{proof}

\begin{co}\label{main} In the conditions of Theorem \ref{indlim}, suppose that $B$ is compact. Then one has a canonical isomorphism
$$\varinjlim_{\substack{D\in{\cal D}iv(B) \\  |D|\subset \BN_\pi({\cal E})}} H^0({\cal E}_{\cal D}({\cal D}))\textmap{\Psi} H^0(\Tors(R^1\pi_*({\cal E})))\ .
$$
\end{co}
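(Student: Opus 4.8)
The plan is to apply the global-section functor $H^0(B,-)$ to the sheaf isomorphism $\psi$ of Theorem \ref{indlim} and to push it through the inductive limit; the only genuine work is the interchange of $H^0(B,-)$ with $\varinjlim$, which is precisely where compactness of $B$ is used.

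First I would record the termwise identifications. For a fixed effective divisor $D$ with $|D|\subset\BN_\pi({\cal E})$ one has $R_D^\pi=\pi_*({\cal E}_{\cal D}({\cal D}))$ by definition, and the identity $\Gamma(B,\pi_*{\cal F})=\Gamma(\pi^{-1}(B),{\cal F})=\Gamma(M,{\cal F})$ applied to the torsion sheaf ${\cal F}={\cal E}_{\cal D}({\cal D})$ (supported on ${\cal D}=\pi^{-1}(D)$) gives a canonical identification $H^0(B,R_D^\pi)=H^0({\cal E}_{\cal D}({\cal D}))$. Taking global sections of the monomorphisms $\psi_D\colon R_D^\pi\to\Tors(R^1\pi_*({\cal E}))$ produced in the proof of Theorem \ref{indlim} yields a compatible family $H^0(\psi_D)$, hence a map $\Psi$ on the colimit; since the index set $\{D\mid |D|\subset\BN_\pi({\cal E})\}$ is directed (the sum of two such divisors is again of this type and dominates both), the colimit is filtered and $\Psi$ is well defined and canonical.

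Next I would prove that $\Psi$ is an isomorphism by realizing the colimit as an increasing union of subsheaves. By Proposition \ref{inBN}(2) all transition maps $R_{D'D}^\pi$ are monomorphisms, and by the proof of Theorem \ref{indlim} each $\psi_D$ identifies $R_D^\pi$ with the subsheaf ${\cal G}_D\subset\Tors(R^1\pi_*({\cal E}))$ annihilated by the ideal sheaf of $D$; thus $\Tors(R^1\pi_*({\cal E}))=\bigcup_D{\cal G}_D$ is a directed union of subsheaves, the union being everything precisely by the surjectivity of $\psi$. Injectivity of $\Psi$ is then immediate, since a section of some ${\cal G}_D$ vanishing in the ambient sheaf already vanishes in ${\cal G}_D$. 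For surjectivity I would take $s\in H^0(\Tors(R^1\pi_*({\cal E})))$; for each $x\in B$ the germ $s_x$ lies in $\varinjlim_D({\cal G}_D)_x=\bigcup_D({\cal G}_D)_x$, so $s_x\in({\cal G}_{D(x)})_x$ for some $D(x)$, and hence $s$ restricts to a section of the subsheaf ${\cal G}_{D(x)}$ on a neighbourhood $U_x$ of $x$.

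Here compactness does the essential work: I would cover $B$ by finitely many such $U_{x_1},\dots,U_{x_n}$, set $D:=\sum_i D(x_i)$ (still with $|D|\subset\BN_\pi({\cal E})$, and dominating every $D(x_i)$), so that each $s|_{U_{x_i}}$ is a section of ${\cal G}_D$; these pieces agree on overlaps because they are all restrictions of the single section $s$, and since ${\cal G}_D$ is a subsheaf they glue to a section of ${\cal G}_D$ over $B$ mapping to $s$, exhibiting $s$ in the image of $H^0({\cal G}_D)$. The main obstacle is exactly this last step: without compactness the local index $D(x)$ need not be bounded uniformly over $B$, and $H^0(B,-)$ does not in general commute with filtered colimits of sheaves, the sheaf colimit being the sheafification of the presheaf colimit. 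Compactness of $B$ is precisely what allows the choice of a single dominating divisor $D$, and no further input about the geometry of $\pi$ or ${\cal E}$ is required.
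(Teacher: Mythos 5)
Your proof is correct and follows essentially the same route as the paper: identify $H^0(B,R_D^\pi)=H^0(M,{\cal E}_{\cal D}({\cal D}))$ and then commute $H^0(B,-)$ with the filtered colimit, compactness of $B$ being exactly what makes the interchange valid. The only difference is that the paper simply cites Godement's theorem that $\Gamma$ commutes with inductive limits of sheaves on compact spaces, whereas you reprove that interchange by hand in the relevant special case (an increasing union of subsheaves ${\cal G}_D\subset\Tors(R^1\pi_*({\cal E}))$, with the finite subcover supplying a single dominating divisor), which is a perfectly sound substitute.
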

\begin{proof} It suffices to recall that the functor $\Gamma$   commutes with inductive limits of sheaves on compact spaces (Theorem 3.10.1, p. 162 [Go]) and that 
$$H^0(B,\pi_*({\cal E}_{\cal D}({\cal D})))=H^0(M, {\cal E}_{\cal D}({\cal D}))\ .$$
\end{proof}

\section{$\Tors(R^1\pi_*({\cal E}))$ as  ${\cal E}xt^1 $ and ${\cal H}om$}\label{ext}

Let $A$ be an integral domain .   For an $A$-module $N$ we denote by $\Ann(N)\subset A$ the annihilator ideal of $N$. For an element $\alpha\in A$ we denote by $\Tors_\alpha N\subset N$ the annihilator submodule of $\alpha$ in $N$. In other words 
$$\Tors_\alpha N:=\ker(\alpha\id_N)\ .$$
 We have obviously   
\begin{equation}\label{TorsIndLim}
\Tors N=\varinjlim_{\alpha\in A^* }  \Tors_\alpha N\ ,
\end{equation}
where the set  $A^*:=A\setminus\{0\}$ is ordered with respect to  divisiblity, and the inductive limit is constructed using the family of inclusions $\iota_{\alpha\beta}: \Tors_\alpha N\hookrightarrow \Tors_\beta N$ associated with pairs $\alpha|\beta$. For an element $\alpha\in A^*$ the natural morphism
$$ \qmod{\Hom(N,A)}{\alpha\Hom(N,A)}\to \Hom\left(N,\qmod{A}{\alpha A}\right)\ .\ 
$$
 is injective, hence the left hand module  can be identified with a submodule of the right hand module. The multiplication with  an element $\gamma\in A^*$ defines an injective  morphism 
$$m_\gamma:\qmod{A}{\alpha A}\to \qmod{A}{ \alpha \gamma A}$$
so an injective morphism 
$$\mu_\gamma:\Hom\left(N,\qmod{A}{\alpha A}\right)\to \Hom\left(N,\qmod{A}{\alpha \gamma A}\right)$$
 defined by composition with $m_\gamma$. One has 
 $$\mu_\gamma^{-1} \left(\qmod{\Hom(N,A)}{\alpha \gamma\Hom(N,A)}\right)= \qmod{\Hom(N,A)}{\alpha\Hom(N,A)}$$
  so $\mu_\gamma$ induces an injective  morphism
$$\eta_\gamma: \Qmod{\Hom\left(N,\qmod{A}{\alpha A}\right)}{\left\{\qmod{\Hom(N,A)}{\alpha\Hom(N,A)}\right\}}\ \  \longrightarrow \ \ \ \ \ \ \ \ \ \ \ \ \ \ \ \ \ \ \ \ \ \ \ \ \ \ \ \ \ \ \ \ \ \ \ \ $$
\vspace{3mm}
$$\ \ \ \ \ \ \ \ \ \ \ \ \ \ \ \ \ \ \ \ \ \ \  \ \ \ \ \ \ \ \ \ \ \ \ \ \ \ \ \ \ \ \ \ \ \ \ \ \ \ \ \ \ \ \ \ \ \longrightarrow\ \  \Qmod{\Hom\left(N,\qmod{A}{\alpha \gamma A}\right)}{\left\{\qmod{\Hom(N,A)}{\alpha \gamma\Hom(N,A)}\right\}}\ .
$$

If $\alpha|\beta$ we put  $m_{\alpha\beta}:=m_{\beta/\alpha}$, $\mu_{\alpha\beta}:=\mu_{\beta/\alpha}$, $\eta_{\alpha\beta}:=\eta_{\beta/\alpha}$.

\begin{pr} \label{Ext1NA} Let $N$ be an  $A$-module.  
\begin{enumerate}[(i)]

\item For any $\alpha\in A^*$ one has a canonical isomorphism
$$\Tors_\alpha(\Ext^1(N,A))=\Qmod{\Hom\left(N,\qmod{A}{\alpha A}\right)}{\left\{\qmod{\Hom(N,A)}{\alpha\Hom(N,A)}\right\}}\ .
$$
\item One has a canonical isomorphism 
$$\varinjlim_{\alpha\in A^*} \left\{\Qmod{\Hom\left(N,\qmod{A}{\alpha A}\right)}{\left\{\qmod{\Hom(N,A)}{\alpha\Hom(N,A)}\right\}}\right\} \textmap{\simeq} \Tors\Ext^1(N,A) \ ,
$$
where  the inductive limit is constructed using the family of morphisms $(\eta_{\alpha\beta})_{\alpha |\beta}$.
 \item If $N$ and $\Tors(N)$ are finitely generated, then $\Ann(\Ext^1(N,A))\ne 0$, in particular $\Ext^1(N,A)$ is a   torsion module,  the inductive limit in (ii) computes the whole $\Ext^1(N,A)$ and it stabilizes, i.e.,   there exists $\alpha_0\in A^*$  such that the canonical morphisms
 $$\Qmod{\Hom\left(N,\qmod{A}{\alpha_0 A}\right)}{\left\{\qmod{\Hom(N,A)}{\alpha_0\Hom(N,A)}\right\}}\ \longrightarrow\  \ \ \ \ \ \ \ \ \ \ \ \ \ \ \ \ \ \ \ \ \ \ \   $$
 $$\ \ \ \ \ \ \ \ \ \ \ \   \longrightarrow\  \varinjlim_{\alpha\in A^*} \left\{\Qmod{\Hom\left(N,\qmod{A}{\alpha A}\right)}{\left\{\qmod{\Hom(N,A)}{\alpha\Hom(N,A)}\right\}}\right\} \to  \Ext^1(N,A)
$$
are isomorphisms. 
 
\end{enumerate}

\end{pr}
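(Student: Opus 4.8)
The plan for (i) is to apply the covariant functor $\Hom(N,-)$ to the short exact sequence
$$0\to A\textmap{\alpha} A\to \qmod{A}{\alpha A}\to 0,$$
which is exact precisely because $A$ is an integral domain and $\alpha\ne 0$, so multiplication by $\alpha$ is injective. The associated long exact sequence contains the segment
$$\Hom(N,A)\textmap{\alpha}\Hom(N,A)\to \Hom\left(N,\qmod{A}{\alpha A}\right)\textmap{\partial}\Ext^1(N,A)\textmap{\alpha}\Ext^1(N,A),$$
where the two end-maps are multiplication by $\alpha$. By exactness the image of $\partial$ is $\ker(\alpha\,\id)=\Tors_\alpha(\Ext^1(N,A))$, while $\ker(\partial)$ is the image of $\Hom(N,A)\to \Hom(N,A/\alpha A)$, which is exactly the submodule $\Hom(N,A)/\alpha\Hom(N,A)$ under the canonical injection recalled just before the statement. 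Passing to the quotient, $\partial$ induces the asserted canonical isomorphism.

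For (ii) I would pass to the inductive limit over $\alpha\in A^*$ ordered by divisibility. By the general identity (\ref{TorsIndLim}) the target is $\varinjlim_\alpha \Tors_\alpha(\Ext^1(N,A))$, so it suffices to check that the isomorphisms of (i) are compatible with the transition maps, i.e. that for $\alpha\mid\beta$ the inclusion $\Tors_\alpha\hookrightarrow\Tors_\beta$ corresponds to $\eta_{\alpha\beta}$. This follows from naturality of the connecting map $\partial$ applied to the morphism of short exact sequences whose components are $\id_A$ on the sub-object, multiplication by $\beta/\alpha$ on the middle term, and $m_{\beta/\alpha}$ on the quotient $A/\alpha A\to A/\beta A$. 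Since the map induced on $\Ext^1(N,A)$ by $\id_A$ is the identity, naturality yields $\partial_\beta\circ\mu_{\beta/\alpha}=\partial_\alpha$, which upon passage to quotients is precisely the relation defining $\eta_{\alpha\beta}$; hence the limits agree.

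For (iii) the core point is to exhibit a nonzero annihilator of $\Ext^1(N,A)$; once this is in hand, $\Ext^1(N,A)=\Tors\Ext^1(N,A)$ and the stabilization is automatic, since if $\alpha_0\Ext^1(N,A)=0$ then $\Tors_{\alpha_0}\Ext^1(N,A)=\Ext^1(N,A)$ and the system of (ii) is constant beyond $\alpha_0$. To produce the annihilator I would use $0\to\Tors(N)\to N\to P\to 0$ with $P:=N/\Tors(N)$ finitely generated and torsion free. Since $\Tors(N)$ is finitely generated torsion it is killed by some $\alpha_1\in A^*$, hence so is $\Ext^1(\Tors(N),A)$. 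For the torsion-free piece I would pick generators of $P$, clear denominators inside $P\otimes_A K$ ($K$ the field of fractions of $A$) to embed $P$ in a free module $A^r$ with finitely generated torsion cokernel $Q$, killed by some $c\in A^*$; from $\Ext^1(A^r,A)=0$ the long exact sequence gives an injection $\Ext^1(P,A)\hookrightarrow\Ext^2(Q,A)$, and the latter is killed by $c$. Feeding both facts into the long exact sequence of $0\to\Tors(N)\to N\to P\to 0$ shows that $\alpha_1 c$ annihilates $\Ext^1(N,A)$.

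The step I expect to be the main obstacle is the torsion-free case in (iii): over a general integral domain a finitely generated torsion-free module need not be free, so the naive ``$\Ext^1(\text{free})=0$'' shortcut is unavailable, and one genuinely must route through the embedding $P\hookrightarrow A^r$ with torsion cokernel and the degree shift to $\Ext^2(Q,A)$. By contrast, the compatibility check underlying (ii) is routine once the correct morphism of short exact sequences is written down, and (i) is a direct reading of the long exact sequence.
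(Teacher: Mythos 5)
Your proposal is correct and follows essentially the same route as the paper: part (i) via the long exact sequence of $\Hom(N,-)$ applied to $0\to A\to A\to A/\alpha A\to 0$, part (ii) via naturality of the connecting map for the morphism of short exact sequences with components $\id_A$, $m_{\beta/\alpha}$, $m_{\beta/\alpha}$, and part (iii) by splitting off $\Tors(N)$, embedding the torsion-free quotient into a free module with torsion cokernel, and using the degree shift $\Ext^1(P,A)\hookrightarrow\Ext^2(Q,A)$ together with the fact that $\Ann(M)$ annihilates $\Ext^i(M,A)$. The only cosmetic difference is that the paper verifies the $\eta_{\alpha\beta}$-compatibility already within part (i) rather than in (ii).
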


\begin{proof}
(i) The long exact sequence associated with the short exact sequence
$$0\to A\textmap{ \alpha\cdot } A\to \qmod{A}{\alpha A}\to 0
$$
and the functor $\Hom(N,\cdot)$  reads
$$0\to \Hom(N,A)\textmap{ \alpha\cdot } \Hom(N,A)\to \Hom(N,\qmod{A}{\alpha A})\to\Ext^1(N,A)\textmap{ \alpha\cdot }\Ext^1(N,A)\ ,$$
which gives a canonical isomorphism
$$\Qmod{\Hom\left(N,\qmod{A}{\alpha A}\right)}{\left\{\qmod{\Hom(N,A)}{\alpha\Hom(N,A)}\right\}}\textmap{\simeq} \Tors_\alpha \Ext^1(N,A)\ .
$$
It remains to prove that via these identifications the morphism  $\eta_{\alpha\beta}$ corresponds to the inclusion morphism 
$$\iota_{\alpha\beta}:\Tors_\alpha \Ext^1(N,A)\to \Tors_\beta \Ext^1(N,A)$$
associated with a pair $\alpha|\beta$. Using the morphism of short exact sequences
$$
\begin{diagram}[s=7mm,w=8mm,midshaft]
0&\rTo & A & \rTo^{\alpha\cdot} & A & \rTo & \qmod{A}{\alpha A}&\rTo &0\\
&&\dTo<{\id_A} & & \dTo<{\gamma\cdot}& &\dTo<{\gamma\cdot}\\
0&\rTo & A & \rTo^{\beta\cdot} & A & \rTo & \qmod{A}{\beta A}&\rTo &0
\end{diagram}\ ,
$$
 we get the commutative diagram
$$
\begin{diagram}[s=7mm,w=10mm,midshaft]
  \Hom(N,\qmod{A}{\alpha A}) & \rTo & \Ext^1(N,A) & \rTo^{\alpha\cdot} & \Ext^1(N,A) \\
 \dTo<{\eta_{\alpha\beta}} & & \dTo<{\id}& &\dTo<{\gamma\cdot}\\
 \Hom(N,\qmod{A}{\beta A}) & \rTo & \Ext^1(N,A) & \rTo^{\beta\cdot} & \Ext^1(N,A) \end{diagram}\ ,
$$
which proves the claim.
{\ }\\ \\
(ii) Follows directly from (i) taking into account   (\ref{TorsIndLim}).
\\ \\
(iii) Let $N_0$ be the quotient $N_0:=N/\Tors(N)$, which is torsion free. Since $N_0$ is a finitely generated torsion free module over the integral ring $A$, it can be embedded in a finitely generated free module $F$ such that $\Ann(F/N_0)\ne 0$. We recall briefly the argument. Let $K$ be the field of fractions of $A$ and $\iota: N_0\to V_0:=K\otimes_A N_0$ the canonical embedding.  We identify $N_0$ with its image  $\iota(N_0)$, and we fix a finite $A$-generating set $G$ of $N_0$ (which will also be a $K$-generating set of $V_0$) and a system $(g_1,\dots,g_n)$ of $G$ which is a $K$-basis of $V_0$. Developing  all generators $g\in G$   with respect to this basis, we see that there exists  $\alpha\in A^*$ such that $ N_0\subset \frac{1}{\alpha}(A g_1\oplus\dots \oplus A g_n)$. The $A$-module  $F:=\frac{1}{\alpha}(A g_1\oplus\dots \oplus A g_n)$ is obviously free  and  $\alpha F\subset  N_0$. Therefore   $\alpha(F/N_0)=0$, hence $\alpha\in \Ann(F/N_0)$.
 
The short exact sequence 
$$
0\to N_0\to F\to  F/N_0\to 0
$$ 
yields  an  isomorphism $\Ext^1(N_0,A)\simeq  \Ext^2((F/N_0),A)$. Using the Corollary to Proposition 6, p. A X 89, N° 4 \cite{Bou}, we obtain 
$$\{\alpha\}\subset \Ann(F/N_0)\subset\Ann(\Ext^1(N_0,A))\ .$$
   Therefore $\Ann(\Ext^1(N_0,A))\ne 0$. On the other hand the exact sequence
$$0\to\Tors(N)\to N\to N_0\to 0\ ,\ $$
yields the exact sequence
$$ 0\to \Ext^1(N_0,A)\to \Ext^1(N,A)\to\Ext^1(\Tors(N),A)\ ,
$$
which shows that $\Ann(\Ext^1(N_0,A))\Ann(\Ext^1(\Tors(N),A))\subset\Ann(\Ext^1(N,A))$. We know that $\Ann(\Ext^1(N_0,A))\ne 0$ and, since   $\Tors(N)$ is also assumed finitely generated, we have $\Ann(\Tors(N))\ne 0$, which implies $\Ann(\Ext^1(\Tors(N),A))\ne 0$ (again by Corollary to Proposition 6, p. A X 89, N° 4 \cite{Bou}).  

Therefore,  the annihilator $\Ann(\Ext^1(N,A))$ contains a non-zero element $\alpha_0$, hence
$$\Ext^1(N,A)=\Tors(\Ext^1(N,A))=\Tors_{\alpha_0}(\Ext^1(N,A))\ ,
$$
which can be identified with $\Hom\left(N,\qmod{A}{\alpha_0 A}\right)/\left\{\qmod{\Hom(N,A)}{\alpha_0\Hom(N,A)}\right\}$ by (i).
\end{proof}

 We will see that in the case when  $A$ is a unique factorization domain,  $N$ is a finitely generated  torsion module, and $\Ann(N)$ is a finitely generated ideal, there is an explicit, canonical choice of  an element $\alpha_0\in A^*$ which computes $\Ext^1(N,A)$ as in Proposition \ref{Ext1NA} (iii). \\

Suppose that $A$ is a  unique factorization domain and that $N$ is a finitely generated torsion module. This implies that the annihilator ideal $\Ann(N)$ does not vanish. 
Suppose also that $\Ann(N)$ is finitely generated. This condition holds automatically if we assume $A$ to be Noetherian. We can define $a:=\gcd(\Ann(N))\in A^*$. In other words one has $a=\gcd(\varphi_1,\dots,\varphi_k)$, where $(\varphi_i)$ is a system of  generators of $\Ann(N)$. For any $\beta\in A^*$ we put $(a,\beta):=\gcd(a,\beta)$.

\begin{lm} \label{hope} Suppose that $A$ is a  unique factorization domain, $N$ is a finitely generated torsion module, and $\Ann(N)$ is finitely generated. For any $\beta\in A^*$ one has
$$\Hom(N,\qmod{A}{\beta A})=\mu_{(a,\beta)\beta}\left(\Hom(N,\qmod{A}{(a,\beta) A})\right) 
$$
\end{lm}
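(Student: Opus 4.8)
The plan is to observe that $\mu_{(a,\beta)\beta}$ is already injective (as remarked when $\mu_\gamma$ was introduced), so the assertion amounts precisely to this map being \emph{surjective}. I would prove surjectivity by identifying the image of the underlying multiplication map with the submodule of $\qmod{A}{\beta A}$ annihilated by $\Ann(N)$, through which every homomorphism out of $N$ must factor.

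First I would fix notation: set $d:=(a,\beta)=\gcd(a,\beta)$ and $c:=\beta/d$, so that $\beta=dc$ and, unwinding the convention $\mu_{\alpha\beta}:=\mu_{\beta/\alpha}$, the map $\mu_{(a,\beta)\beta}=\mu_{c}$ sends $g\in\Hom(N,\qmod{A}{dA})$ to $m_c\circ g$, where $m_c:\qmod{A}{dA}\to\qmod{A}{\beta A}$ is multiplication by $c$. Thus it suffices to show that every $f\in\Hom(N,\qmod{A}{\beta A})$ has the form $m_c\circ g$ for some $g$.

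The heart of the argument, and the step I expect to be the main obstacle, is the computation of the submodule $T:=\{y\in\qmod{A}{\beta A}\mid \Ann(N)\,y=0\}$. Writing $\Ann(N)=(\varphi_1,\dots,\varphi_k)$, so that $a=\gcd(\varphi_1,\dots,\varphi_k)$, an element $\bar u\in\qmod{A}{\beta A}$ lies in $T$ iff $\beta\mid\varphi_i u$ for all $i$. I would analyze this prime by prime in the unique factorization domain $A$: denoting by $v_p$ the $p$-adic valuation and using $v_p(a)=\min_i v_p(\varphi_i)$, the condition becomes $v_p(u)\geq v_p(\beta)-v_p(a)$ for every prime $p$; since valuations are nonnegative and $v_p(c)=\max\bigl(0,\,v_p(\beta)-v_p(a)\bigr)$, this is equivalent to $c\mid u$. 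Hence $T=\{\bar u\mid c\mid u\}=c\cdot(\qmod{A}{\beta A})=\im(m_c)$, and as $m_c$ is injective it is an isomorphism onto $T$.

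Finally I would conclude as follows: since $f(N)$ is a quotient of $N$, it is annihilated by $\Ann(N)$, whence $f(N)\subseteq T=\im(m_c)$. Because $m_c$ is an isomorphism onto its image, $g:=m_c^{-1}\circ f$ is a well-defined homomorphism in $\Hom(N,\qmod{A}{dA})$ satisfying $\mu_c(g)=m_c\circ g=f$. This exhibits $f$ in the image of $\mu_{(a,\beta)\beta}$, proving the desired surjectivity and hence the claimed equality.
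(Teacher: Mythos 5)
Your proof is correct and follows essentially the same route as the paper: the paper works with the ideal ${\cal J}=$ preimage of $f(N)$ and establishes the chain $\Ann(N){\cal J}\subset\beta A\subset{\cal J}\subset(\beta/(a,\beta))A$ by exactly the prime-by-prime valuation argument you give (using that $v_p(a)=\min_i v_p(\varphi_i)$), then concludes as you do that $f$ factors through the injective map $m_{(a,\beta)\beta}$. Your reformulation via the annihilator submodule $T$ of $\qmod{A}{\beta A}$ is a slightly cleaner packaging (and identifies $T$ exactly rather than only the needed inclusion), but the mathematical content is identical.
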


\begin{proof}
The inclusion 
$$\mu_{(a,\beta)\beta}\left(\Hom(N,\qmod{A}{(a,\beta) A})\right)\subset \Hom(N,\qmod{A}{\beta A})
$$
is obvious and has been  used before. For the other inclusion let $f\in \Hom(N,\qmod{A}{\beta A})$.  Denote by ${\cal J}$ the ideal of $A$ defined as the pre-image in $A$ of $f(N)$.
One has
\begin{equation}\label{incl}\Ann(N){\cal J}\subset \beta A\subset {\cal J}\ ,
\end{equation}
where the first inclusion follows from $\Ann(N)f(N)=0$. We claim that
\begin{equation}\label{Jideal}
{\cal J}\subset (\beta/(a,\beta)) A\ .
\end{equation}

Indeed, for a prime element $p\in A$ let $p^m$ be its maximal power which divides all  generators $\varphi_i$ of $\Ann(N)$. In other words, $p^m$ divides   $\varphi_i$ for any $i\in\{1,\dots,k\}$ and there exists $i_0\in\{1,\dots,k\}$ such that $p^m$ is the maximal power of $p$ which divides $\varphi_{i_0}$.   Let   $p^n$ be the maximal power of $p$ which divides  $\beta$. Using (\ref{incl}) we get for every $v\in {\cal J}$ a relation of the type
$$v\left(\begin{matrix} \varphi_1\\ \vdots \\ \varphi_k\end{matrix}\right)=\beta \left(\begin{matrix} \alpha_1\\ \vdots \\ \alpha_k\end{matrix}\right)
$$
with $\alpha_i\in A$. The $i_0$-th equality shows that whenever  $n>m$ we must have $p^{n-m}| v$, hence the inclusion (\ref{Jideal}) is proved. We can complete now the chain of inclusions (\ref{incl}) to 

\begin{equation}\label{inclnew}\Ann(N){\cal J}\subset \beta A\subset {\cal J}\subset (\beta/(a,\beta)) A\ .
\end{equation}

Taking quotients with respect to the ideal $\beta A$ of the last two $A$-modules we get
$$f(N)\subset  \qmod{(\beta/(a,\beta))A}{\beta A }=m_{(a,\beta)\beta}\left(\qmod{A}{(a,\beta)A}\right)\ .
$$
Since $m_{(a,\beta)\beta}$ is a monomorphism this proves that $f$ can be written as $m_{(a,\beta)\beta}\circ g$ for a morphism $g\in \Hom(N,\qmod{A}{(a,\beta) A})$, which completes the proof.
\end{proof}

\begin{pr} \label{gcd} Suppose that $A$ is a unique factorization domain, $N$ is a finitely generated torsion module, and $\Ann(N)$ is finitely generated. Put $a:=\gcd(\Ann(N))$. 
\begin{enumerate}[(i)]
\item The natural morphism
$$c:\Hom\left(N,\qmod{A}{a A}\right)\to \varinjlim_{\alpha\in A^*}\Hom\left(N,\qmod{A}{\alpha A}\right)
$$
is an isomorphism.
\item One has  a canonical isomorphism 
$$\Hom\left(N,\qmod{A}{a A}\right)\textmap{\simeq}\Ext^1(N,A)\ .
$$
\end{enumerate}

\end{pr}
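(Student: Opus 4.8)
The plan is to deduce both parts from Lemma~\ref{hope} and Proposition~\ref{Ext1NA}, after first recording the elementary remark that, since $N$ is a torsion module over the domain $A$, one has $\Hom(N,A)=0$: any $f\colon N\to A$ annihilates every $n\in N$, because if $\alpha\in A^*$ kills $n$ then $\alpha f(n)=f(\alpha n)=0$ and $A$ has no zero divisors. This makes the subquotients appearing in Proposition~\ref{Ext1NA} collapse: the denominators $\Hom(N,A)/\alpha\Hom(N,A)$ vanish, so Proposition~\ref{Ext1NA}(ii) reads simply as a canonical isomorphism $\varinjlim_{\alpha}\Hom(N,A/\alpha A)\simeq\Tors\Ext^1(N,A)$, with $c$ being the canonical structural map of this colimit from the term at $\alpha=a$.

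For part (i) I would first establish injectivity of $c$. The index set $A^*$ ordered by divisibility is directed (any two elements divide their product), and every transition morphism $\mu_{\alpha\beta}$ of the system was shown above to be injective. In a filtered colimit with injective transition maps, each canonical map from a term is injective—if two elements of a fixed term have equal image in the colimit they become equal after some transition map, which is injective, so they were already equal. Hence $c$ is injective.

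For surjectivity, represent an arbitrary class in the colimit by some $f\in\Hom(N,A/\beta A)$. By Lemma~\ref{hope} we may write $f=\mu_{(a,\beta)\beta}(g)$ for some $g\in\Hom(N,A/(a,\beta)A)$, where $(a,\beta)=\gcd(a,\beta)$. Since the transition map from level $(a,\beta)$ up to level $\beta$ is precisely $\mu_{(a,\beta)\beta}$, the class of $f$ in the colimit coincides with the class of $g$ at level $(a,\beta)$. Now $(a,\beta)\mid a$, so $g$ also transports, via $\mu_{(a,\beta)a}$, to an element $h\in\Hom(N,A/aA)$ carrying the same class; thus $[f]=c(h)$ lies in the image of $c$. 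Together with injectivity this proves $c$ is an isomorphism.

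For part (ii) I would simply compose canonical isomorphisms. Part (i) gives $\Hom(N,A/aA)\simeq\varinjlim_\alpha\Hom(N,A/\alpha A)$; the collapsed form of Proposition~\ref{Ext1NA}(ii) identifies the latter with $\Tors\Ext^1(N,A)$; and Proposition~\ref{Ext1NA}(iii)—applicable because $N$, and hence $\Tors(N)=N$, is finitely generated—shows $\Ext^1(N,A)$ is a torsion module, i.e.\ $\Tors\Ext^1(N,A)=\Ext^1(N,A)$. Chaining these yields the desired $\Hom(N,A/aA)\simeq\Ext^1(N,A)$. The only genuinely substantial input is Lemma~\ref{hope}, which drives the surjectivity step and is already established; granting it, the main thing to watch is the bookkeeping of the divisibility relations $(a,\beta)\mid a$ and $(a,\beta)\mid\beta$ and the compatibility of the $\mu$'s with the colimit structure, so I expect no real obstacle beyond this.
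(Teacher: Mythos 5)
Your proposal is correct and follows essentially the same route as the paper: injectivity of $c$ from the injectivity of the transition maps $\mu_{\alpha\beta}$, and surjectivity via Lemma \ref{hope} by descending from level $\beta$ to level $(a,\beta)$ and then transporting up to level $a$. Your preliminary remark that $\Hom(N,A)=0$ for a torsion module over a domain, and the resulting collapse of the subquotients in Proposition \ref{Ext1NA}, correctly supplies the bookkeeping for part (ii) that the paper leaves implicit.
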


\begin{proof}

Since the morphisms $\mu_{\alpha\beta}$ which intervene in the inductive limit    are injective, it follows that $c$ is injective, so it suffices to prove that $c$ is surjective.

Let $\beta\in A^*$ and $f\in \Hom\left(N,\qmod{A}{\beta A}\right)$. By Lemma \ref{hope} there exists %
$$g\in  \Hom(N,\qmod{A}{(a,\beta) A})$$
such that $f=\mu_{(a,\beta)\beta}(g)$. Noting that $(a,\beta)|a$, we see that the three elements 
$$g\in  \Hom(N,\qmod{A}{(a,\beta) A})\ ,\ \mu_{(a,\beta)a}(g)\in \Hom\left(N,\qmod{A}{a A}\right)\ ,\ f=\mu_{(a,\beta)\beta}(g) 
$$
define the same element in the inductive limit. Therefore $\mu_{(a,\beta)a}(g)$ is a pre-image (with respect to $c$) of the element defined by $f$ in the inductive limit.

\end{proof}

 Corollary 4.11 p. 133 in \cite{BS} states:
\begin{pr}\label{BS}
Let $\pi:M\to B$ be a proper morphism of complex spaces  and ${\cal E}$ a coherent sheaf on $M$ which is flat over $B$. Then there exists a coherent sheaf ${\cal T}$ on $B$ (unique up to isomorphism) with the following property: for every coherent sheaf ${\cal A}$ on an open set $U\subset B$ there exists an isomorphism
$$(\pi_U)_*({\cal E}_U\otimes \pi_U^*({\cal A}))\textmap{\simeq} {\cal H}om({\cal T}_U,{\cal A})
$$
which is functorial with respect to ${\cal A}$.
\end{pr}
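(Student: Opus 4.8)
The plan is to exhibit the left-exact functor ${\cal A}\mapsto (\pi_U)_*({\cal E}_U\otimes\pi_U^*({\cal A}))$ as a ${\cal H}om$-functor, locally on $B$, and then to glue. The essential input is the analytic base-change (or ``comparison'') theorem of Grauert, in the form developed in \cite{BS}: since $\pi$ is proper and ${\cal E}$ is flat over $B$, every point of $B$ has an open neighbourhood $U$ on which there exists a bounded complex $L^\bullet$ of finite-rank locally free ${\cal O}_U$-modules, placed in non-negative degrees, together with isomorphisms
$$\resto{R^q\pi_*({\cal E}\otimes\pi^*({\cal A}))}{U}\textmap{\simeq} H^q(L^\bullet\otimes_{{\cal O}_U}{\cal A})$$
which are functorial in the coherent sheaf ${\cal A}$ on $U$. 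In degree zero this gives a functorial isomorphism $(\pi_U)_*({\cal E}_U\otimes\pi_U^*({\cal A}))\simeq\ker(d\otimes\id_{\cal A})$, where $d:L^0\to L^1$ is the first differential of $L^\bullet$.

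First I would dualise the differential. As $L^0$ and $L^1$ are locally free of finite rank, the transpose $d^\vee:(L^1)^\vee\to(L^0)^\vee$ is a morphism of coherent sheaves, and I set
$${\cal T}_U:=\mathrm{coker}\left(d^\vee:(L^1)^\vee\to(L^0)^\vee\right)\ ,$$
which is coherent, being the cokernel of a morphism of coherent sheaves. Applying the left-exact contravariant functor ${\cal H}om(-,{\cal A})$ to the exact sequence $(L^1)^\vee\to(L^0)^\vee\to{\cal T}_U\to 0$ yields the exact sequence
$$0\to{\cal H}om({\cal T}_U,{\cal A})\to{\cal H}om((L^0)^\vee,{\cal A})\to{\cal H}om((L^1)^\vee,{\cal A})\ .$$
For a locally free sheaf $L$ of finite rank the canonical morphism $L\otimes{\cal A}\to{\cal H}om(L^\vee,{\cal A})$ is an isomorphism, natural in both $L$ and ${\cal A}$, and under these identifications the last arrow above is precisely $d\otimes\id_{\cal A}:L^0\otimes{\cal A}\to L^1\otimes{\cal A}$. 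Hence ${\cal H}om({\cal T}_U,{\cal A})\simeq\ker(d\otimes\id_{\cal A})\simeq(\pi_U)_*({\cal E}_U\otimes\pi_U^*({\cal A}))$, and naturality of every map involved makes this isomorphism functorial in ${\cal A}$.

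It remains to settle uniqueness and gluing, both of which I would deduce from Yoneda's lemma. If ${\cal T}$ and ${\cal T}'$ are coherent sheaves on an open set with ${\cal H}om({\cal T},-)\simeq{\cal H}om({\cal T}',-)$ functorially on coherent sheaves, then evaluating this natural isomorphism and its inverse at ${\cal A}={\cal T}$ and ${\cal A}={\cal T}'$ and tracking the images of the respective identity morphisms produces mutually inverse morphisms ${\cal T}\to{\cal T}'$, ${\cal T}'\to{\cal T}$; thus the representing sheaf is unique up to a canonical isomorphism. Consequently the locally built sheaves ${\cal T}_U$ are canonically identified over overlaps, these identifications automatically satisfy the cocycle condition by their canonicity, and the ${\cal T}_U$ patch to a global coherent sheaf ${\cal T}$ on $B$ representing the functor.

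The hard part is the very first step: the existence of the bounded representing complex $L^\bullet$ of vector bundles computing the direct images compatibly with arbitrary coherent base change. This is exactly Grauert's direct-image and comparison theorem in the analytic category and constitutes the substance of \cite{BS}; granting it, the construction of ${\cal T}$ is the purely formal dualisation carried out above.
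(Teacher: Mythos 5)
Your argument is correct. Note, however, that the paper does not prove this statement at all: it is quoted verbatim as Corollary 4.11, p.~133 of \cite{BS}, so there is no in-text proof to compare against. What you have written is essentially the classical proof of that corollary (as in \cite{BS} and in Mumford's treatment of cohomology and base change): represent $R^\bullet\pi_*({\cal E}\otimes\pi^*(-))$ locally by a bounded complex $L^\bullet$ of finite-rank locally free sheaves, set ${\cal T}_U:=\mathrm{coker}\bigl(d^\vee:(L^1)^\vee\to(L^0)^\vee\bigr)$, identify ${\cal H}om({\cal T}_U,{\cal A})$ with $\ker(d\otimes\id_{\cal A})$, and glue by the uniqueness of a representing object; all steps are sound, with the only non-formal input being the existence of the comparison complex, which you correctly isolate and attribute.
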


In this statement ${\cal E}_U$, $\pi_U$ denote the restrictions of ${\cal E}$ and $\pi$ to $\pi^{-1}(U)$, and ${\cal T}_U$ the restriction of ${\cal T}$ to $U$.\\

We will see that the torsion sheaf $\Tors(R^1\pi_*({\cal E}))$ in which we are interested has a simple description in terms of the associated sheaf ${\cal T}$. Before stating our result we recall that for any complex manifold $X$ and coherent  ideal sheaf ${\cal I}\subset {\cal O}_X$ (which is non-trivial on any connected component) there exists a minimal locally principal ideal sheaf $\gcd({\cal I})$ containing ${\cal I}$. At a point $x\in X$ the stalk $\gcd({\cal I})_x$ is generated by $a:=\gcd(\varphi^1,\dots,\varphi^k)$, where $(\varphi^1,\dots,\varphi^k)$ is a system of generators of the stalk ${\cal I}_x$.  The definition is coherent because, choosing holomorphic functions $\tilde a$, $\tilde\varphi^i$ representing the germs $a$, $\varphi^i$, we have $\tilde a_u=\gcd(\tilde\varphi^1_u,\dots, \tilde \varphi^k_u)$ for every $u\in X$ which is sufficiently close to $x$ (see Theorem 2.11 \cite{De} p. 93).

The   effective divisor $D_{\max}(Z)$ corresponding to $\gcd({\cal I})$ is the maximal effective divisor contained in the complex space $Z$ defined by ${\cal I}$. This maximal divisor might be of course empty (when $\gcd({\cal I})={\cal O}_X$).  

I am grateful to Daniel Barlet for explaining me interesting geometric constructions of $D_{\max}(Z)$. For instance,  this divisor can be obtained using the  blow up $p:\hat X\to X$ of $X$ along $Z$ and defining $D_{\max}(Z)$  as image (as analytic cycle) of the divisor on $\hat X$ corresponding to the locally principal sheaf of ideals $p^*({\cal I})/\Tors\subset {\cal O}_{\hat X}$.

\begin{thry}\label{new-desc}
Let $\pi:M\to B$ be a proper morphism of complex spaces  and ${\cal E}$ a coherent sheaf on $M$ which is flat over $B$. Suppose that $B$ is locally irreducible, and let ${\cal T}$ be the associated coherent sheaf on $B$ given by Proposition \ref{BS}. Then
\begin{enumerate}[(i)]
\item One has canonical isomorphisms
$${\cal E}xt^1({\cal T},{\cal O}_B)\textmap{\simeq}\Tors(R^1\pi_*({\cal E}))\ .
$$
\item If $\pi_*({\cal E})=0$ and $B$ is smooth, then
\begin{enumerate}
\item ${\cal T}$ is a torsion sheaf,
\item Denoting by $D_{\max}$ the maximal divisor contained in the complex subspace defined by   $\Ann({\cal T})$, one has canonical isomorphisms
$${\cal H}om({\cal T}_{D_{\max}},{\cal O}_{D_{\max}})\textmap{\simeq}{\cal H}om({\cal T},{\cal O}_{D_{\max}})\textmap{\simeq}\Ext^1({\cal T},{\cal O}_B)\textmap{\simeq}\Tors(R^1\pi_*({\cal E}))\ .
$$
\end{enumerate} 
 
\end{enumerate}

\end{thry}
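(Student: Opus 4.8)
The plan is to prove (i) and (ii) by reducing, stalk by stalk, to the purely algebraic Propositions~\ref{Ext1NA} and~\ref{gcd}, the bridge between the sheaf ${\cal T}$ and direct images being the representing property of Proposition~\ref{BS}. Fix $x\in B$ and set $A:={\cal O}_{B,x}$ and $N:={\cal T}_x$. Since $B$ is locally irreducible, $A$ is a (Noetherian) integral domain, which is exactly the standing hypothesis of Proposition~\ref{Ext1NA}; for part (ii) the smoothness of $B$ makes $A$ a regular local ring, hence a unique factorization domain, as required by Proposition~\ref{gcd}. Because ${\cal T}$ is coherent, ${\cal H}om$ commutes with passage to stalks, so I may compute everything over $A$.

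For part (i), I would first record two consequences of Proposition~\ref{BS}. Taking ${\cal A}={\cal O}_B$ gives ${\cal H}om({\cal T},{\cal O}_B)=\pi_*({\cal E})$, hence $\Hom(N,A)=(\pi_*{\cal E})_x$. For a nonzero germ $\alpha\in A$, represented by $\varphi\in{\cal O}(U)$ on a small neighbourhood $U$ of $x$ with divisor $D:=Z(\varphi)$, taking ${\cal A}={\cal O}_D$ gives ${\cal H}om({\cal T},{\cal O}_D)=\pi_*({\cal E}\otimes\pi^*{\cal O}_D)=\pi_*({\cal E}_{\cal D})$, hence $\Hom(N,A/\alpha A)=(\pi_*{\cal E}_{\cal D})_x$. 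Since the isomorphism of Proposition~\ref{BS} is functorial in ${\cal A}$, the natural map $\Hom(N,A)\to\Hom(N,A/\alpha A)$ is identified with the restriction map $(\pi_*{\cal E})_x\to(\pi_*{\cal E}_{\cal D})_x$, and multiplication by $\alpha$ with multiplication by $\varphi$. Now Proposition~\ref{Ext1NA}(i) presents $\Tors_\alpha\Ext^1(N,A)$ as the cokernel of $\Hom(N,A)/\alpha\Hom(N,A)\to\Hom(N,A/\alpha A)$, while the stalk at $x$ of $\ker(m_\varphi)$, namely $\Tors_\alpha\big((R^1\pi_*{\cal E})_x\big)$, is by the long exact sequence~(\ref{les}) the cokernel of $(\pi_*{\cal E})_x/\alpha(\pi_*{\cal E})_x\to(\pi_*{\cal E}_{\cal D})_x$. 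By the identifications above these are the same map, so $\Tors_\alpha\Ext^1(N,A)\simeq\Tors_\alpha\big((R^1\pi_*{\cal E})_x\big)$ canonically. Passing to the inductive limit over $\alpha\in A^*$ and invoking Proposition~\ref{Ext1NA}(ii) on the left and~(\ref{TorsIndLim}) on the right yields $\Tors\,\Ext^1(N,A)\simeq\Tors(R^1\pi_*{\cal E})_x$. Finally, since $\Ext^1$ over the fraction field of $A$ vanishes, $\Ext^1(N,A)$ is already a torsion $A$-module; hence $\Ext^1(N,A)=\Tors\,\Ext^1(N,A)\simeq\Tors(R^1\pi_*{\cal E})_x$, and these canonical stalk isomorphisms glue to the desired sheaf isomorphism.

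For part (ii), assume moreover $\pi_*({\cal E})=0$ and $B$ smooth. Statement (a) is immediate: ${\cal H}om({\cal T},{\cal O}_B)=\pi_*({\cal E})=0$ gives $\Hom(N,A)=0$, and over the domain $A$ this forces $N/\Tors(N)$ to have rank zero, i.e.\ $N=\Tors(N)$, so ${\cal T}$ is a torsion sheaf. For (b), $N$ is then a finitely generated torsion module with finitely generated annihilator, so Proposition~\ref{gcd}(ii) applies with $a:=\gcd(\Ann N)$ and gives $\Hom(N,A/aA)\simeq\Ext^1(N,A)$. As $A/aA={\cal O}_{D_{\max},x}$ by the very definition of $D_{\max}$ (the divisor cut out by $\gcd(\Ann{\cal T})$), this is the middle isomorphism ${\cal H}om({\cal T},{\cal O}_{D_{\max}})\simeq{\cal E}xt^1({\cal T},{\cal O}_B)$; the rightmost isomorphism to $\Tors(R^1\pi_*{\cal E})$ is part (i). The leftmost isomorphism is formal: ${\cal O}_{D_{\max}}$ is annihilated by $a$, so every ${\cal O}_B$-homomorphism ${\cal T}\to{\cal O}_{D_{\max}}$ kills $a{\cal T}$ and factors through ${\cal T}_{D_{\max}}={\cal T}/a{\cal T}$, giving $\Hom_{{\cal O}_B}({\cal T},{\cal O}_{D_{\max}})\simeq\Hom_{{\cal O}_{D_{\max}}}({\cal T}_{D_{\max}},{\cal O}_{D_{\max}})$.

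The delicate point — and the only place where something must genuinely be checked — is the matching of the two cokernel descriptions in part (i): that the map appearing in Proposition~\ref{Ext1NA}(i) and the map appearing in the exact sequence~(\ref{les}) coincide under the representing isomorphism. This rests on the functoriality of Proposition~\ref{BS} with respect to both the restriction ${\cal O}_B\to{\cal O}_D$ and multiplication by $\alpha$, together with the care needed to verify that the resulting stalk isomorphisms are canonical enough to glue into a morphism of sheaves (equivalently, one may produce the global morphism directly from the maps $\psi_D$ of Theorem~\ref{indlim} and check bijectivity on stalks).
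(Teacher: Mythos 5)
Your proof follows essentially the same route as the paper's: both translate $\pi_*({\cal E})$ and $\pi_*({\cal E}_{\cal D})$ into ${\cal H}om({\cal T},{\cal O}_B)$ and ${\cal H}om({\cal T},{\cal O}_D)$ via Proposition~\ref{BS}, match the cokernel description of $\ker(m_\varphi)$ coming from the sequence (\ref{les}) with the one in Proposition~\ref{Ext1NA}(i), pass to the inductive limit over $\varphi$, and settle (ii) with Proposition~\ref{gcd}. The only (immaterial) differences are that the paper constructs the isomorphisms at the level of sheaves on open sets $U$ before passing to stalks --- which is precisely the remedy you yourself propose for the gluing issue you flag --- and that it invokes Proposition~\ref{Ext1NA}(iii) rather than your localization argument to see that $\Ext^1({\cal T}_x,{\cal O}_{B,x})$ is entirely torsion.
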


\begin{proof}

(i) Let $U\subset B$ be a connected open set and $\varphi\in{\cal O}(U)\setminus\{0\}$. We put $\Phi:=\varphi\circ \pi$, and we denote by $D_\varphi\subset U$, $D_\Phi\subset \pi^{-1}(U)$ the effective divisiors defined by $\varphi$ and $\Phi$ respectively.   
Since ${\cal E}$ is flat over $B$,  the exact sequence
$$0\to {\cal O}_U\textmap{ \varphi\cdot }{\cal O}_U\to {\cal O}_{D_\varphi}\to 0
$$
on $B$ gives  an exact sequence
$$0\to {\cal E}_{ U}\textmap{\Phi\cdot } {\cal E}_{ U}\to {\cal E}_{D_\Phi}\to 0\  
$$
of coherent sheaves on $M$ (see \cite{BS} p. 172). The corresponding long exact sequence on $U$ is
$$0\to (\pi_U)_*({\cal E}_U)\textmap{ \varphi\cdot } (\pi_U)_*({\cal E}_U)\to (\pi_U)_*({\cal E}_{D_\Phi})\to R^1(\pi_U)_*({\cal E}_U)\textmap{m_\varphi}R^1(\pi_U)_*({\cal E}_U)\ ,
$$
which gives a canonical isomorphism
\begin{equation}\label{kermphi} \Qmod{(\pi_U)_*({\cal E}_{D_\Phi})}{\qmod{(\pi_U)_*({\cal E}_U)}{\varphi(\pi_U)_*({\cal E}_U)}}\textmap{\simeq I_\varphi}\ker(m_\varphi)\ ,
\end{equation}
which composed with the identifications   

$$(\pi_U)_*({\cal E}_U)={\cal H}om({\cal T}_U,{\cal O}_U)\ ,\ (\pi_U)_*({\cal E}_{D_\Phi})={\cal H}om({\cal T}_U,{\cal O}_{D_\varphi})\ , $$
given by Proposition \ref{BS} gives a canonical isomorphism
\begin{equation}\label{kermphinew} \Qmod{{\cal H}om({\cal T}_U,{\cal O}_{D_\varphi})}{\qmod{{\cal H}om({\cal T}_U,{\cal O}_U)}{\varphi{\cal H}om({\cal T}_U,{\cal O}_U)}}\textmap{\simeq J_\varphi}\ker(m_\varphi)\ .
\end{equation}

For a product $\psi=\varphi\xi$ we get a morphism of short exact sequences
$$
\begin{diagram}[s=7mm,w=8mm,midshaft]
0&\rTo & {\cal E}_{\pi^{-1}(U)} & \rTo^{\Phi\cdot} & {\cal E}_{\pi^{-1}(U)} & \rTo & {\cal E}_{D_\Phi}&\rTo &0\\
&&\dTo<{\id} & & \dTo<{\Xi\cdot}& &\dTo<{\Xi\cdot}\\
0&\rTo & {\cal E}_{\pi^{-1}(U)} & \rTo^{\Psi\cdot} & {\cal E}_{\pi^{-1}(U)} & \rTo & {\cal E}_{D_\Psi}&\rTo &0
\end{diagram}\ ,
$$
 which gives the commutative diagram
$$
\begin{diagram}[s=7mm,w=10mm,midshaft]
 (\pi_U)_*({\cal E}_{D_\Phi}) & \rTo & R^1(\pi_U)_*({\cal E}_U) & \rTo^{m_\varphi} &  R^1(\pi_U)_*({\cal E}_U) \\
 \dTo<{\xi\cdot } & & \dTo<{\id}& &\dTo<{m_\xi}\\
 (\pi_U)_*({\cal E}_{D_\Psi}) & \rTo & R^1(\pi_U)_*({\cal E}_U) & \rTo^{m_\psi} & R^1(\pi_U)_*({\cal E}_U) \end{diagram}\ .
$$
This shows that, via the isomorphisms $I_\varphi$, $I_\psi$ (and also $J_\varphi$, $J_\psi$), the obvious inclusion  $\ker(m_\varphi)\subset\ker(m_\psi)$ corresponds to the morphism 
induced by multiplication with $\xi$.

For a fixed point $x\in B$ the same results will hold if we use stalks at $x$ instead of sheaves on $U$ and germs $\varphi$, $\psi$, $\xi\in{\cal O}_{B,x}^*$ instead of non-zero holomorphic functions. Therefore we obtain a canonical isomorphism

$$\varinjlim_{\varphi\in {\cal O}_{B,x}^*}\Qmod{{\cal H}om({\cal T}_x,{\cal O}_{D_\varphi,x})}{\qmod{{\cal H}om({\cal T}_x,{\cal O}_x)}{\varphi{\cal H}om({\cal T}_x,{\cal O}_x)}}\textmap{\simeq J^x} \Tors(R^1\pi_*({\cal E}))_x\ .
$$
The claim follows now by Proposition \ref{Ext1NA} (iii).
\\ \\
(ii) Suppose now that $\pi_*({\cal E})=0$. By Proposition \ref{BS} this gives ${\cal H}om({\cal T},{\cal O}_B)=0$, hence ${\cal T}$ is a sheaf of rank 0, in other words it is a torsion sheaf.  The claim follows by (i) and Proposition \ref{gcd}

\end{proof}

\section{Applications and examples} \label{App}

\subsection{Parametrizing moduli stacks of complex manifolds}

\def\Diff{\mathrm{Diff}}
\def\reg{\mathrm{reg}}
Let $S$ be a fixed compact, connected, oriented smooth   manifold,  and let ${\cal M}_S$ be the moduli set of biholomorphism classes of complex manifolds which are diffeomorphic to $S$ . This moduli set has a natural topology obtained by   identifying it with the quotient ${\cal J}/\Diff(M)$ of the space ${\cal J}$ of integrable complex structures on   $S$ by the group $\Diff(M)$ of diffeomorphisms of $S$.  It is well known that, even in simple cases, this quotient topology is highly non-Hausdorff. 

%

 In general the moduli space ${\cal M}_S$ is not a complex space, but can be naturally regarded as a complex analytic stack, i.e. a ``category fibered in groupoids" over the  category ${\cal C}$ of complex spaces (endowed with the usual Grothendieck topology) satisfying the two standard  conditions: the isomorphisms form a sheaf and every descent datum is effective (see Definition 4.3 \cite{Fa}). The ``fiber" of the   stack ${\cal M}_S$ over a complex space $B$  is just the groupoid of flat holomorphic families ${\cal X}\to B$ whose fibers are complex manifolds diffeomorphic to $S$. Intuitively, such a family can be interpreted as a holomorphic morphism $B\to {\cal M}_S$.  
A detailed construction of the moduli stack  ${\cal M}_S$ can be found in \cite{Me}.

We will concentrate on the open moduli substack ${\cal M}^{\reg}_S\subset{\cal M}_S$ of complex manifolds $X$ diffeomorphic to $S$ and having $H^2(\Theta_X)=0$. The versal deformation of a manifold $X$   with $H^2(\Theta_X)=0$ is smooth, so the complex analytic stack ${\cal M}^{\reg}_S$ is locally isomorphic to the  quotient  stack of a complex manifold  by a groupoid complex space (see \cite{Ho}  sect. 3 for the analogous concept in the algebraic geometric framework). 
An interesting example of such an analytic stack (obtained as quotient of a smooth complex manifold) is described below; it can be interpreted as the moduli stack classifying a family of contracting germs, so a family of minimal class VII surfaces (see \cite{OT} p. 341, the example at the end of sect. 7):
\\ \\
{\bf Example:} Consider the action of $\alpha:\C\times \C^2\to\C^2$ given by 
$$\alpha(\zeta,(z_1,z_2)):=(z_1,z_2+z_1\zeta)\ .
$$

The map $p:\C^2/\alpha\to\C$ given by $[(z_1,z_2)]\to z_1$ is surjective, its  fiber over any point  $z\in\C^*$  is a point, whereas the fiber over 0 is the  line $\{[(0,u)]|\ u\in\C\}$. The  topological quotient $\C^2/\alpha$ can be intuitively thought of as a complex line  with a 1-parameter family  of  mutually non-separable ``origins" $0_u:=[(0,u)]$. Note that the  dimension of the formal Zariski tangent space of the quotient stack $\C^2/\alpha$ at $[(z_1,z_2)]$ is 1  for $z_1\ne 0$ and  2 for $z_1=0$. Therefore, as happens with many moduli stacks of the form ${\cal M}^{\reg}_S$, the dimension of the Zariski tangent spaces is non-constant (it ``jumps"). 
Consider the family of holomorphic maps $f_u:\C\to\C^2$  given by $f_u(z)=(z,u)$ and note that the images of the induced maps $g_u:\C\to \C^2/\alpha$ cover the quotient $\C^2/\alpha$.  Any $g_u$ is injective on whole $\C$ and étale on $\C^*$. The stack $\C^2/\alpha$ can be reconstructed using this family of ``parametrizations" and studying how their images   glue  together in the stack. Indeed, in this way we see that $\C^2/\alpha$  can be obtained from a 1-parameter family $(\C_u)_{u\in\C}$ of copies of $\C$ by identifying all punctured lines $\C^*_u$ to $\C^*$ in the obvious way.    This example shows that for understanding geometrically a moduli stack ${\cal M}^{\reg}_S$ is important to study the following
\\ \\
{\bf Problem:} {\it Classify generically versal holomorphic families $\pi:{\cal X}\to B$ with smooth base $B$,  whose fibers $X_b$ are  connected complex manifolds diffeomorphic to $S$ and  have $H^2(\Theta_{X_b})=0$. For such a family describe explicitly  the analytic  subset  $B_{\mathrm{nv}}\subset B$ of points where $\pi$ is non-versal.   
}
\\

The notion of ``versality" used here follows the terminology introduced in \cite{BHPV}  section I.10, hence it corresponds to ``semi-universality"  in the terminology of \cite{Dou}, \cite{Bi} and \cite{Fl}. 

In order to explain in detail the condition {\it generically versal} used above, consider  
 a proper holomorphic submersion (with connected base and connected fibers) $\pi:{\cal X}\to B$, denote by $\Theta^v_{\cal X}$ the vertical tangent subbundle of the tangent bundle $\Theta_{\cal X}$.  As usually we identify holomorphic vector bundles with the associated locally free coherent sheaves. The long exact sequence associated with the short exact sequence 
$$0\to \Theta^v_{\cal X}\to \Theta_{\cal X}\textmap{\pi_*} \pi^*(\Theta_B)\to 0 \ ,
$$
of sheaves on ${\cal X}$ reads
$$\dots\to R^0\pi_*\pi^*(\Theta_B)=\Theta_B\textmap{\delta} R^1\pi_*(\Theta^v_{\cal X}) \to R^1\pi_*(\Theta _{\cal X})\to\dots \ .
$$
We denote by 
$$\delta_b: (\Theta_B)_b\to R^1\pi_*(\Theta^v_{\cal X})_b\ ,\ \delta(b):\Theta_B(b)\to R^1\pi_*(\Theta^v_{\cal X})(b)$$
the morphisms induced by $\delta$ between the stalks, respectively the fibers of the two sheaves at a point $b\in B$. 
 The Kodaira-Spencer map $\rho_{b}:\Theta_B(b)\to   H^1(X_b,\Theta_b)$ can be written as 
 $$\rho_{b}= c_b\circ\delta(b)\ ,
 $$ 
 where $c_b$ denotes the canonical map $R^1\pi_*(\Theta^v_{\cal X})(b)\to H^1(X_b,\Theta_{X_b})$ (see \cite{BS} p. 112).

 \begin{pr}\label{EqCond}
Let $\pi:{\cal X}\to B$ be a proper holomorphic submersion with connected base and connected fibers. Suppose that there exists $b_0\in B $ such that $\rho_{b_0}$ is an isomorphism and the  map $b\mapsto h^1(\Theta_{X_b})$ is constant on an open neighborhood $U$ of $b_0$.  For a point $b\in B$ the following conditions are equivalent: 
\begin{enumerate}
\item $\rho_b$ is bijective,
\item $\rho_b$ is surjective,
\item $h^1(\Theta_{X_b})=\dim(B)$ and $\delta_b$ is an isomorphism of ${\cal O}_{B,b}$-modules.
\item $h^1(\Theta_{X_b})=\dim(B)$ and $\delta_b$ is an epimorphism of ${\cal O}_{B,b}$-modules.

\end{enumerate}
\end{pr}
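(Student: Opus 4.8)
The plan is to reduce everything to the factorization $\rho_b=c_b\circ\delta(b)$ recorded before the statement, feeding in two ingredients: Grauert's base-change theorem for ${\cal G}:=R^1\pi_*(\Theta^v_{\cal X})$, and elementary commutative algebra over the local ring ${\cal O}_{B,b}$, which is an integral domain because $B$ is a manifold. Throughout put $n:=\dim(B)$, and recall that $\Theta_B$ is locally free of rank $n$, so $\Theta_B(b)\cong\C^n$ and the stalk $(\Theta_B)_b\cong{\cal O}_{B,b}^n$.

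First I would establish the \emph{preliminary fact}: $h^1(\Theta_{X_b})\geq n$ for every $b\in B$, with equality on a dense open set, and—crucially—whenever $h^1(\Theta_{X_b})=n$ the map $c_b$ is an isomorphism. Since $\rho_{b_0}$ is an isomorphism we have $h^1(\Theta_{X_{b_0}})=n$, hence by the constancy hypothesis $h^1(\Theta_{X_b})=n$ for all $b\in U$. The function $b\mapsto h^1(\Theta_{X_b})$ is upper semicontinuous, so its minimum $m$ is attained on a nonempty open, hence (as $B$ is connected and thus irreducible) dense, subset; this subset meets $U$, and comparing values there gives $m=n$. Therefore $h^1\geq n$ everywhere while $\{h^1\leq n\}$ is open, so on that open set $h^1=n$. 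Consequently, if $h^1(\Theta_{X_b})=n$ then $b\in\{h^1\leq n\}$ and $h^1$ is constant equal to $n$ on a neighbourhood of $b$; Grauert's theorems (\cite{BS} Theorem 3.4 p. 116) then show ${\cal G}$ is locally free of rank $n$ near $b$ and $c_b$ is an isomorphism. Applying this on $U$ also shows ${\cal G}$ is locally free of rank $n$ there, so—$B$ being irreducible—${\cal G}$ has generic rank $n$ and the stalk ${\cal G}_b$ has rank $n$ over ${\cal O}_{B,b}$ for every $b$.

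Then I would run the cycle $(1)\Rightarrow(2)\Rightarrow(4)\Rightarrow(3)\Rightarrow(1)$, which yields all equivalences. $(1)\Rightarrow(2)$ is trivial. For $(2)\Rightarrow(4)$: surjectivity of $\rho_b:\C^n\to H^1(\Theta_{X_b})$ forces $h^1(\Theta_{X_b})\leq n$, hence $=n$ by the preliminary, so $c_b$ is an isomorphism; from $\rho_b=c_b\circ\delta(b)$ we read off that $\delta(b)$ is surjective, whence $\delta_b$ is an epimorphism by Nakayama. For $(4)\Rightarrow(3)$: $\delta_b$ epi means ${\cal O}_{B,b}^n\twoheadrightarrow{\cal G}_b$; tensoring with the fraction field $K$ of ${\cal O}_{B,b}$ and using $\rk({\cal G}_b)=n$ shows $K^n\to{\cal G}_b\otimes K\cong K^n$ is an isomorphism, so the kernel of $\delta_b$ is torsion; being a submodule of the free module ${\cal O}_{B,b}^n$ over the domain ${\cal O}_{B,b}$ it is torsion free, hence zero, so $\delta_b$ is an isomorphism. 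For $(3)\Rightarrow(1)$: $h^1(\Theta_{X_b})=n$ makes $c_b$ an isomorphism by the preliminary, and $\delta_b$ iso gives $\delta(b)$ iso, so $\rho_b=c_b\circ\delta(b)$ is bijective.

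The main obstacle is the preliminary step: pinning the generic value of $h^1(\Theta_{X_b})$ to exactly $n$ and upgrading a \emph{pointwise} equality $h^1=n$ to local constancy, so that Grauert's base-change theorem can be invoked to force $c_b$ to be an isomorphism. Once this is in place everything downstream is routine—Nakayama for the passage between $\delta_b$ and $\delta(b)$, the torsion-freeness argument over the domain ${\cal O}_{B,b}$, and the bookkeeping with the factorization. The only genuine analytic input is Grauert's theorem, and the only genuine global input is the irreducibility of $B$, used both to locate the minimum of $h^1$ on a dense open set meeting $U$ and to endow ${\cal G}$ with a well-defined generic rank.
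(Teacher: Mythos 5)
Your proof is correct and follows essentially the same route as the paper's: both identify the dense Zariski open set $B_0=\{b:h^1(\Theta_{X_b})=n\}$ via semicontinuity and irreducibility of $B$, invoke Grauert/cohomological flatness to make $c_b$ an isomorphism there, and then reduce everything to linear and local algebra applied to the factorization $\rho_b=c_b\circ\delta(b)$. The only (immaterial) difference is in the passage between $\delta(b)$ and $\delta_b$: the paper argues via $\det(\delta_b)\in{\cal O}_{B,b}^*$ using freeness of the stalks on $B_0$, while you use Nakayama plus a generic-rank/torsion-freeness argument over the domain ${\cal O}_{B,b}$.
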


\begin{proof}

(1)$\Rightarrow$(2) is obvious.  
Put $n:=\dim(B)$.  Using the hypothesis and the semicontinuity theorem \cite{BS}, \cite{Fl}, \cite{DV}, it follows that $n$ is the minimal value of this map on $B$, hence the set 
$$B_0:=\{b\in B|\ h^1(\Theta_{X_b})=n\}$$
 (which obviously contains $U$) is a non-empty Zariski open set.  
Suppose that (2) holds. The surjectivity of $\rho_b$ implies $h^1(\Theta_{X_b})\leq n$, hence in fact $h^1(\Theta_{X_b})=n$. Therefore  $b\in B_0$ and  $\rho_b$ is bijective. On the other hand $\Theta^v_{\cal X}$ is cohomologically flat in dimension 1 over $B_0$, hence  $R^1\pi_*(\Theta^v_{\cal X})$ is locally trivial of rank $n$ on $B_0$ and the canonical map $c_b$ is an isomorphism for every $b\in B_0$ (see \cite{BS} p. 134). This shows that, for $b\in B_0$, the bijectivity of $\rho_b$ is equivalent to the bijectivity of $\delta(b)$. On the other hand, for $b\in B_0$ the ${\cal O}_{B,b}$-modules  $(\Theta_B)_b$, $R^1\pi_*(\Theta^v_{\cal X})_b$ are free of rank $n$, hence
$$ \delta(b) \hbox{ is bijective } \Leftrightarrow \det(\delta(b))\ne 0\Leftrightarrow \det(\delta_b)\in {\cal O}_{B,b}^*\Leftrightarrow \delta_b \hbox{ is invertible}\ .$$
Therefore (2)$\Rightarrow$(3).  Obviously (3)$\Rightarrow$(4). On the other hand 
$$\delta_b\hbox{ is surjective } \Rightarrow   \delta(b) \hbox{ is surjective },$$
which, for $b\in B_0$, implies  $\delta(b) \hbox{ is bijective}$. As above this implies that $\delta_b$ is invertible. This shows (4)$\Rightarrow$(3). Finally, if $(3)$ holds then $\delta(b)$ is an isomorphism which, for $b\in B_0$,  implies that $\rho_b=c_b\circ\delta(b)$  is an isomorphism.
  \end{proof}
  
  A holomorphic family which satisfies the assumptions of Proposition \ref{EqCond} will be called generically versal. The terminology is justified by the following corollary, which can be regarded as a special case of the classical Zariski openness of versality  \cite{Bi}, \cite{Dou}, \cite{Fl}. Note however that in the terminology of these authors the  versality condition  is weaker than ours, so the corollary below is not  formally a particular case of   this classical result. 
\begin{co}
Let $\pi:{\cal X}\to B$ be a proper holomorphic submersion with connected base and connected fibers, such  that there exists $b_0\in B $ for which the Kodaira-Spencer map $\rho_{b_0}$ is an isomorphism, and the  map $b\mapsto h^1(\Theta_{X_b})$ is constant on an open neighborhood $U$ of $b_0$. Then the set  
$$B_{\mathrm{v}}:=\{b\in B|\ \pi\hbox{ is versal at } b\}$$
is Zariski open in $B$.
\end{co}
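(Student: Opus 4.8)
The plan is to reduce the statement entirely to Proposition \ref{EqCond} by identifying the versality locus $B_{\mathrm{v}}$ with the locus where the Kodaira--Spencer map is bijective, and then to exhibit the latter as the non-vanishing set of a determinant section. First I would invoke the classical criterion underlying the notion of versality used here: by the Kodaira--Spencer completeness theorem a deformation is complete (versal in the weak sense) at $b$ exactly when $\rho_b$ is surjective, and a complete deformation is semi-universal --- which is the meaning of ``versal'' adopted in this paper, following \cite{BHPV} --- precisely when moreover $\rho_b$ is injective. Hence $\pi$ is versal at $b$ if and only if $\rho_b$ is bijective, so that
$$B_{\mathrm{v}}=\{b\in B\mid \rho_b\hbox{ is bijective}\}\ .$$
Note that the family $\pi$ satisfies verbatim the hypotheses of Proposition \ref{EqCond}, namely the existence of $b_0$ with $\rho_{b_0}$ an isomorphism and the constancy of $b\mapsto h^1(\Theta_{X_b})$ on a neighbourhood $U$ of $b_0$.

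Next I would apply Proposition \ref{EqCond}: its equivalence (1)$\Leftrightarrow$(3) shows that for every $b\in B$ the bijectivity of $\rho_b$ is equivalent to the conjunction of $h^1(\Theta_{X_b})=n$ (with $n:=\dim B$) and the bijectivity of $\delta(b)$. Recalling from the proof of that proposition that the set $B_0:=\{b\in B\mid h^1(\Theta_{X_b})=n\}$ is the locus where the upper semicontinuous function $b\mapsto h^1(\Theta_{X_b})$ attains its minimal value $n$ (a value already attained on the open set $U$), the semicontinuity theorem guarantees that $B_0$ is a non-empty Zariski open subset of $B$. Moreover, over $B_0$ the bundle $\Theta^v_{\cal X}$ is cohomologically flat in dimension $1$, so that $R^1\pi_*(\Theta^v_{\cal X})$ and $\Theta_B$ are both locally free of rank $n$ there. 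Thus $B_{\mathrm{v}}=\{b\in B_0\mid \delta(b)\hbox{ is bijective}\}$.

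Finally I would carry out the determinant argument. Over $B_0$ the morphism $\delta$ becomes a morphism between two locally free sheaves of the same rank $n$, so its $n$-th exterior power $\det(\delta)$ is a holomorphic section of the line bundle ${\cal L}:={\cal H}om(\det\Theta_B,\det R^1\pi_*(\Theta^v_{\cal X}))$ on $B_0$; by the chain of equivalences established in the proof of Proposition \ref{EqCond}, for $b\in B_0$ the fibre map $\delta(b)$ is bijective if and only if $\det(\delta)$ does not vanish at $b$. Consequently $B_{\mathrm{v}}=B_0\setminus Z(\det\delta)$ is the complement in $B_0$ of the analytic hypersurface $Z(\det\delta)$, hence Zariski open in $B_0$; since $B_0$ is itself Zariski open in $B$, the set $B_{\mathrm{v}}$ is Zariski open in $B$. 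I expect the only genuinely delicate step to be the very first one --- the identification of the versality locus with the bijectivity locus of $\rho_b$ --- since it relies on the completeness/semi-universality criteria of deformation theory rather than on the sheaf-theoretic results of this paper; once that identification is granted, the remainder is a direct unwinding of Proposition \ref{EqCond} together with the elementary fact that the non-vanishing locus of a section of a line bundle is Zariski open.
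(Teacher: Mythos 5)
Your argument follows essentially the same route as the paper: identify versality at $b$ with bijectivity of $\rho_b$ via the Kodaira--Spencer completeness theorem, invoke Proposition \ref{EqCond} to reduce to the bijectivity of $\delta(b)$ over the Zariski open set $B_0$, and exhibit the bad locus as a degeneracy locus of $\delta$. All of that is fine.

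The one step that does not stand as written is the very last inference: ``$B_{\mathrm{v}}$ is Zariski open in $B_0$ and $B_0$ is Zariski open in $B$, hence $B_{\mathrm{v}}$ is Zariski open in $B$.'' In the complex analytic category this implication fails in general: an analytic subset of the open set $B_0$ (such as your hypersurface $Z(\det\delta)$, whose defining section only exists over $B_0$) need not have analytic closure in $B$, so the complement of its union with $B\setminus B_0$ need not be the complement of an analytic subset of $B$. The repair is exactly the formulation the paper uses: the cokernel sheaf $\qmod{R^1\pi_*(\Theta^v_{\cal X})}{\delta(\Theta_B)}$ is a coherent sheaf defined on all of $B$, its support is therefore a closed analytic subset of $B$, and over $B_0$ this support coincides with your $Z(\det\delta)$. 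Hence
$$B_{\mathrm{v}}=B_0\setminus\supp\left(\qmod{R^1\pi_*(\Theta^v_{\cal X})}{\delta(\Theta_B)}\right)=B\setminus\left((B\setminus B_0)\cup\supp\left(\qmod{R^1\pi_*(\Theta^v_{\cal X})}{\delta(\Theta_B)}\right)\right)$$
is the complement of a union of two closed analytic subsets of $B$, which gives the Zariski openness in $B$. With this substitution your proof coincides with the paper's.
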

\begin{proof}
First, using the definition of versality  and Kodaira-Spencer completeness theorem \cite{KS} (which applies because $B$ is smooth) we see that $\pi$ is versal at $b$ if and only if $\rho_b$ is an isomorphism.  With the notations introduced in  the proof of Proposition \ref{EqCond} and using this remark we obtain
\begin{equation}\label{Bv}
B_{\mathrm{v}}=B_0 \setminus\supp\left(\qmod{R^1\pi_*(\Theta^v_{\cal X})}{\delta(\Theta_B)}\right) \ ,
\end{equation}
which proves that $B_{\mathrm{v}}$ is Zariski open.

\end{proof}

Using Proposition \ref{forGeorges} we obtain interesting general properties of   versal holomorphic families.

\begin{pr}\label{divnew}
Let  $\pi:{\cal X}\to B$ be  a generically versal holomorphic submersion with connected surfaces as fibers such that the map $B \ni x\mapsto h^2(\Theta_{X_x})$ is constant on $B$.
 Then the analytic subset $B_{\mathrm{nv}}\subset B$ of points where $\pi$ is non-versal is an effective divisor containing $\BN_\pi$.

\end{pr}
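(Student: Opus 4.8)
The plan is to apply Proposition \ref{forGeorges} to the vertical tangent bundle ${\cal E}:=\Theta^v_{\cal X}$, using the connecting homomorphism $\delta:\Theta_B\to {\cal F}$, ${\cal F}:=R^1\pi_*(\Theta^v_{\cal X})$, as the source of the sections it requires. Put $n:=\dim B$. Granting for the moment hypothesis (1) of Proposition \ref{forGeorges}, that $\BN_\pi$ has codimension $\geq 2$, its complement is dense, so the generic fibre has $h^0(\Theta_{X_b})=0$; since $b\mapsto h^2(\Theta_{X_b})$ is constant and the holomorphic Euler characteristic $\chi(\Theta_{X_b})$ is constant (Ehresmann), Riemann--Roch forces $h^0(\Theta_{X_b})-h^1(\Theta_{X_b})\equiv -n$. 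Hence $R^0\pi_*(\Theta^v_{\cal X})=0$, the set $B_0:=\{b\,|\,h^1(\Theta_{X_b})=n\}$ of (\ref{Bv}) equals $B\setminus\BN_\pi$, and $\rk({\cal F})=n=:k$. Moreover, $h^2$ being constant, Grauert's theorems give that $R^1\pi_*$ commutes with base change, so $c_b:{\cal F}(b)\to H^1(\Theta_{X_b})$ is an isomorphism for every $b$ and $\rho_b=c_b\circ\delta(b)$.

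The mechanics are local. On a small open $U\subset B$ choose a frame $(e_1,\dots,e_n)$ of $\Theta_B|_U$ and put $s_i:=\delta(e_i)\in {\cal F}(U)$, a system of $k=n$ sections. For $b\in B_0\cap U$ the fibre ${\cal F}(b)$ has dimension $n$, so $s(b)$ is linearly dependent exactly when $\delta(b)$ fails to be an isomorphism, i.e. (as $c_b$ is an isomorphism and, by Proposition \ref{EqCond} together with Kodaira--Spencer completeness, versality at $b\in B_0$ is equivalent to $\rho_b$ being bijective) exactly when $\pi$ is non-versal at $b$. Thus the set ${\cal S}$ of Proposition \ref{forGeorges} equals $B_{\mathrm{nv}}\cap(B_0\cap U)$. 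Since $B$ is a connected manifold it is irreducible and, by (\ref{Bv}), $B_{\mathrm{v}}$ is a non-empty Zariski-open, hence dense, subset; so $U$ contains a point where $s$ is independent, and Proposition \ref{forGeorges} applies, giving that $\overline{{\cal S}}\cap U$ is an effective divisor containing $\BN_\pi\cap U$. As every $b\in\BN_\pi$ has $\dim{\cal F}(b)=h^1(\Theta_{X_b})>n$, so that $\rho_b$ cannot be surjective, one has $\BN_\pi\subset B_{\mathrm{nv}}$; since $B_{\mathrm{nv}}$ is closed and contains ${\cal S}$ while $\overline{{\cal S}}\supset\BN_\pi$, we get $B_{\mathrm{nv}}=\overline{{\cal S}}$ locally, and these local divisors patch to exhibit the globally defined set $B_{\mathrm{nv}}$ as an effective divisor containing $\BN_\pi$.

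The main obstacle is precisely the bracketed assumption: verifying that $\BN_\pi$ has codimension $\geq 2$. By Theorem \ref{supp} and Corollary \ref{tf} this is equivalent to the torsion-freeness of ${\cal F}=R^1\pi_*(\Theta^v_{\cal X})$, and I do not expect it to be formal. Indeed, presenting ${\cal F}$ as ${\cal H}^1$ of a three-term complex of vector bundles $L^0\to L^1\to L^2$ with $R^0\pi_*=0$ and $R^2\pi_*$ locally free only realizes it as $\ker(L^1\to L^2)/\im(L^0\to L^1)$, the quotient of a torsion-free sheaf by the locally free subsheaf $\im(L^0\to L^1)\cong L^0$, which need not be saturated; such a quotient may well carry torsion. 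The decisive input must therefore be deformation-theoretic: one has to use genuine versality at the nearby generic points to exclude a codimension-one component of the vector-field locus $\BN_\pi$, so that the only codimension-one degeneration that survives is the vanishing of the ``determinant'' of $\delta$, which is exactly what cuts out $B_{\mathrm{nv}}$. Establishing this codimension bound — equivalently, that carrying a non-trivial infinitesimal automorphism is a codimension-$\geq 2$ condition in such a family — is the heart of the argument; once it is in hand, the two preceding paragraphs conclude the proof.
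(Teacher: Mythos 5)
Your proposal assembles the right ingredients (the connecting morphism $\delta$ applied to a local frame of $\Theta_B$, the identification of ${\cal S}$ with $B_{\mathrm{nv}}$ away from $\BN_\pi$, the inclusion $\BN_\pi\subset B_{\mathrm{nv}}$), but it stalls exactly where you say it does, and the route you propose for closing the gap is the wrong one. You want to verify hypothesis (1) of Proposition \ref{forGeorges} --- that $\BN_\pi$ has codimension $\geq 2$ --- \emph{globally}, and you suggest this must come from a deformation-theoretic argument excluding codimension-one components of the vector-field locus. No such argument is available, and none is needed: the statement of Proposition \ref{divnew} does not presuppose anything about the codimension of $\BN_\pi$, and the paper never proves such a bound.

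The missing idea is to run the argument by contradiction, so that the codimension hypothesis is manufactured locally rather than proved globally. Suppose $B_{\mathrm{nv}}$ is not pure $1$-codimensional and pick a regular point $x\in B_{\mathrm{nv}}$ with $\mathrm{codim}_x(B_{\mathrm{nv}})\geq 2$. On $B\setminus\BN_\pi$ the set $B_{\mathrm{nv}}$ is the degeneracy locus of a morphism of vector bundles of equal rank $n$ (your own second paragraph), hence is a divisor there; so such an $x$ must lie in $\BN_\pi$. Since $\BN_\pi\subset B_{\mathrm{nv}}$, this gives $\mathrm{codim}_x(\BN_\pi)\geq 2$ \emph{for free}, and by upper semicontinuity of the local dimension one may shrink to a chart $U\ni x$ on which $\BN_\pi\cap U$ has codimension $\geq 2$ at every point. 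Now Proposition \ref{forGeorges} applies to $\Theta^v_{{\cal X}_U}$ and $\delta:\Theta_U\simeq{\cal O}_U^{\oplus n}\to R^1(\pi_U)_*(\Theta^v_{{\cal X}_U})$ (a point of $U\setminus\BN_\pi$ where the sections are independent exists because $B_{\mathrm{v}}$ is a dense Zariski-open set), producing an effective divisor through $x$ contained in $B_{\mathrm{nv}}$ --- contradicting $\mathrm{codim}_x(B_{\mathrm{nv}})\geq 2$. With this localization your two computational paragraphs do complete the proof; without it, the proof is genuinely incomplete, since the global bound you defer to is not a consequence of generic versality.
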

\begin{proof}  

Using  Riemann-Roch theorem fiberwise and the hypothesis, we get $B_0=B\setminus\BN_\pi$. Taking into account formula (\ref{Bv}) it follows $\BN_\pi\subset B_{\mathrm{nv}}$. It remains to prove that  $B_{\mathrm{nv}}$ is pure 1-codimensional at any point. 

If not, there would exist a {\it regular} point $x\in B_{\mathrm{nv}}$   such that $\mathrm{codim}_x(B_{\mathrm{nv}})\geq 2$. Then  $x\in \BN_\pi$, because the intersection $B_{\mathrm{nv}}\cap (B\setminus\BN_\pi)$ is obviously an effective  divisor in  the open  set $B\setminus\BN_\pi$: it coincides with the divisor of points where the Kodaira-Spencer map (which on this set is just a morphism of holomorphic vector bundles of the same rank)   is degenerate. 

Since we supposed $\mathrm{codim}_x(B_{\mathrm{nv}})\geq 2$ and we have proved $\BN_\pi\subset B_{\mathrm{nv}}$, we obtain $\mathrm{codim}_x(\BN_\pi)\geq 2$. Choose a local chart $h:U\textmap{\simeq} V\subset \C^n$ around $x$. Applying Proposition \ref{forGeorges} to the vertical tangent bundle $\Theta^v_{{\cal X}_U}$ and the sheaf morphism
$$\delta:\Theta_U={\cal O}_U^{\oplus n}\to R^1(\pi_U)_*(\Theta^v_{{\cal X}_U}) 
$$
(where $\pi_U:{\cal X}_U\to U$ stands for the restriction of $\pi$ to ${\cal X}_U:=\pi^{-1}(U)$), it follows that $B_{\mathrm{nv}}$ is a divisor at $x$, which contradicts our assumption.
\end{proof}

\begin{co} \label{non-inj} With the notations and conditions of Proposition \ref{divnew}:
 \begin{enumerate}
 \item For a point $x\in BN_\pi$ with $\mathrm{codim}_x(\BN_\pi)\geq 2$, the intersection %
 $$U\cap(B_{\mathrm{nv}}\setminus BN_\pi)$$
  is non-empty, for every open neighborhood $U$ of $x$.
 \item The map $B\to {\cal M}_S$ induced by $\pi$ is non-injective around any point   
 $$x\in B_{\mathrm{nv}}\setminus BN_\pi\ .$$
  More precisely, let $x_0\in B_{\mathrm{nv}}\setminus BN_\pi$, $X_0$ the corresponding fiber, $\ug_0$ its universal deformation (which exists because $h^0(\Theta_{X_0})=0$), and    %
$$f_\pi^{x_0}:(B,x_0)\to \ug_{0}$$
the morphism of germs induced by $\pi$. Then either the fiber of $f_\pi^{x_0}$ has strictly positive dimension, or $f_\pi^{x_0}$ is a finite morphism  of degree $\geq 2$. 
 \end{enumerate}
\end{co}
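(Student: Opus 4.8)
The plan is to handle the two assertions separately: the first by a codimension count, the second by analysing the germ morphism $f:=f_\pi^{x_0}$ through its Kodaira--Spencer differential.

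For (1) I would argue as follows. By Proposition \ref{divnew} we have $\BN_\pi\subset B_{\mathrm{nv}}$, so $x\in B_{\mathrm{nv}}$ and, for every neighbourhood $U$ of $x$, the set $U\cap B_{\mathrm{nv}}$ is a non-empty analytic subset of pure codimension $1$ (recall that $B_{\mathrm{nv}}$ is an effective divisor). If the asserted intersection were empty for some $U$, then $U\cap B_{\mathrm{nv}}\subset\BN_\pi$; but $\mathrm{codim}_x(\BN_\pi)\geq 2$, and a pure $1$-codimensional analytic set cannot be contained in a set of codimension $\geq 2$. This contradiction yields $U\cap(B_{\mathrm{nv}}\setminus\BN_\pi)\ne\emptyset$.

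For (2) the heart of the matter is the dichotomy for $f=f_\pi^{x_0}:(B,x_0)\to\ug_0$. First I would recall the standard fact that the differential of $f$ at $x_0$ is the Kodaira--Spencer map $\rho_{x_0}:T_{x_0}B\to H^1(\Theta_{X_0})=T_0\ug_0$. Since $x_0\notin\BN_\pi$, i.e. $x_0\in B_0$, both source and target have dimension $n:=\dim B=h^1(\Theta_{X_0})$; and since $\pi$ is non-versal at $x_0$, the map $\rho_{x_0}$ is not an isomorphism, by the characterisation of versality recorded earlier (versality at $b$ is equivalent to $\rho_b$ being an isomorphism). A linear map between spaces of the same finite dimension that is not bijective is not injective, so $\ker\rho_{x_0}\ne 0$ and $df_{x_0}$ fails to be injective. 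I would then split according to the dimension of the fibre of $f$ over $f(x_0)$, taken as a germ at $x_0$. If this fibre has strictly positive dimension we are in the first alternative. Otherwise the fibre is $0$-dimensional, so $f$ is quasi-finite and hence, by the finite mapping theorem for germs of complex spaces, finite; since a finite morphism preserves dimension this forces $\dim\ug_0=n=\dim T_0\ug_0$, so $\ug_0$ is smooth of dimension $n$. Were $f$ of degree $1$, it would be a finite, generically injective morphism from the reduced germ $(B,x_0)$ onto the normal germ $\ug_0$, hence an isomorphism; but then $df_{x_0}$ would be bijective, contradicting $\ker\rho_{x_0}\ne 0$. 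Therefore $f$ is finite of degree $\geq 2$. In either alternative one produces, arbitrarily close to $x_0$, distinct parameters with the same image under $f$ --- either along the positive-dimensional fibre, or among the $\geq 2$ points of a generic fibre of the finite map --- and such parameters correspond to biholomorphic fibres, which yields the non-injectivity of the induced map $B\to{\cal M}_S$ around $x_0$.

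The step I expect to be the main obstacle is the implication ``finite of degree $1$ $\Rightarrow$ isomorphism'' in the second alternative: one must pin down the germ-theoretic meaning of the degree, check that degree $1$ forces generic bijectivity, and then invoke the normality of the smooth target together with the reducedness of $B$ to upgrade a finite birational morphism to an isomorphism. The identification $df_{x_0}=\rho_{x_0}$ and the precise dictionary between points of $\ug_0$ and isomorphism classes of nearby fibres are standard in deformation theory, but both should be stated with care.
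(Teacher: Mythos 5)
Your proof is correct, and parts of it follow the paper closely: part (1) is exactly the codimension count the author leaves as ``an obvious consequence'' of Proposition \ref{divnew}, and in part (2) the skeleton (identify $df_{x_0}$ with $\rho_{x_0}$, note it is degenerate between spaces of equal dimension $n$, split on $\dim_{x_0}F$, invoke the finite mapping theorem) is the same. Where you genuinely diverge is the final step, the one you yourself flag as the main obstacle. You establish ``degree $\geq 2$'' by a Zariski-main-theorem-type argument: finiteness forces $\dim\ug_0=n=\dim T_0\ug_0$, hence $\ug_0$ is smooth, hence normal; degree $1$ would make $f$ finite and birational onto a normal germ, hence an isomorphism, contradicting $\ker\rho_{x_0}\ne 0$. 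The paper instead takes the local degree to be $\dim_\C({\cal O}_{F,x_0})$ (citing Eisenbud--Levine) and observes in one line that the degeneracy of $df_{x_0}$ forces the scheme-theoretic fibre $F$ to be non-reduced at $x_0$ --- if $F$ were a reduced point, the components of $f$ would generate $\mathfrak m_{x_0}$ and $df_{x_0}$ would be injective, hence bijective --- so $\dim_\C({\cal O}_{F,x_0})\geq 2$ immediately. The paper's route is shorter and needs neither the smoothness of $\ug_0$ nor any normality or flatness input; your route buys a more geometric conclusion (generic fibres of $f$ genuinely contain $\geq 2$ distinct points), which makes the claimed non-injectivity of $B\to{\cal M}_S$ more transparent, at the cost of having to reconcile your sheet-counting degree with the algebraic one $\dim_\C({\cal O}_{F,x_0})$ used in the statement (they agree here because $f$ is finite between smooth germs of equal dimension, hence flat). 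Both arguments are sound.
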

\begin{proof}
The first statement is an obvious consequence of Proposition \ref{divnew}. For the second  note first that $B_{\mathrm{nv}}\setminus BN_\pi$ is just the subset of points $x\in B\setminus BN_\pi$ where the differential of $f_\pi^{x_0}$ is not bijective. The fiber of  $f_\pi^{x_0}$  is a germ $(F,x_0)$ of a complex space, namely the germ in $x_0$ of the complex space defined (in  a neighborhood of $x_0$) by the equation $f^{x_0}_\pi(x)=f^{x_0}_\pi(x_0)$.  If $\dim_{x_0}(F)=0$ then $x_0$ is  an isolated point in the fiber $F$, hence $f^{x_0}_\pi$ is a finite morphism at $x_0$ (see \cite{Fi} Lemma 3.2 p. 132) and its local degree   at $x_0$ is defined. This local degree coincides with $\dim_\C({\cal O}_{F,x_0})$ (\cite{EL} p. 24). Since the differential of $f^{x_0}_\pi$ is degenerate, $F$ cannot be   reduced   at $x_0$, hence $\dim_\C({\cal O}_{F,x_0})\geq 2$.
\end{proof}

Corollary \ref{non-inj} plays an important role in \cite{D2}. In this article the author constructs explicitly generically versal families of class VII surfaces parameterized by open sets $B\subset\C^n$.  Applying Corollary \ref{non-inj} one can prove that the induced  map $B\to {\cal M}$ (in the moduli space of class VII surfaces with fixed $b_2$)  is non-injective, more precisely is non-injective on every open subset which intersects the set  $B_{\mathrm{nv}}\setminus \BN_\pi$ (which is non-empty for these particular families). This conclusion is very interesting, because for two different  points $x\ne y$ of $B$ the corresponding fibers $X_x$, $X_y$ ``look" different, hence there is no obvious isomorphism $X_x\simeq X_y$. It turned out that for Dloussky's families of class VII surfaces, it's very difficult to determine explicitly the divisor $B_{\mathrm{nv}}\subset B$ and the locus of pairs $(x,y)\in B\times B$  for which $X_x\simeq X_y$.

\subsection{An explicit computation of $H^0(\Tors(R^1p_*({\cal E})))$}

Let $X$ be a minimal class VII surface with $b:=b_2(X)>0$.  Recall from \cite{Te4} that the intersection form 
$$q_X:\qmod{H^2(X,\Z)}{\Tors}\times \qmod{H^2(X,\Z)}{\Tors}\to\Z
$$
is negative definite and the lattice $H^2(X,\Z)/\Tors$ admits a basis $(e_1,\dots,e_b)$, unique up to order and called the Donaldson basis of  $H^2(X,\Z)/\Tors$, such that
 $$e_i\cdot e_j=-\delta_{ij}\ ,\ c_1({\cal K}_X)=\sum_{i=1}^b e_i\ .
 $$
 This existence theorem is obtained using Donaldson's first theorem and the fact that $c_1({\cal K}_X)$ is a characteristic element for the intersection form on $H^2(X,\Z)/\Tors$.
For a subset $I\subset\{1,\dots,b\}$ we put
$$e_I:=\sum_{i\in I} e_i\ ,\ \bar I:=\{1,\dots,b\}\setminus I\ .
$$

  We recall that, by definition, a cycle of curves in a surface $X$ is an effective divisor $D\subset X$ which is either an elliptic curve, or a rational curve with a simple singularity, or a sum $D=\sum_{s=1}^k D_s$ of $k\geq 2$ smooth rational curves $D_s$ such that 
 $$D_1D_2=\dots =D_{k-1} D_k=D_k D_1=1\ .$$
The GSS conjecture (which, if true, would complete the classification of class VII surfaces) states that any minimal class VII surface $X$ with $b_2(X)>0$ is a Kato surface (and has a global spherical shell). Any Kato surface has a cycle, so conjecturally any minimal class VII surface $X$ with $b_2(X)>0$ should have a cycle. A generic Kato surface (more precisely a Kato surface  with non-trivial trace \cite{D1}) has a homologically trivial cycle of rational curves. Such a surface is called an Enoki surface and can be obtained as a compactification of an affine line bundle over an elliptic curve. Any minimal class VII surface which has a non-zero numerically trivial divisor (effective or not) is an Enoki surface \cite{E}. 
 
There are two types of Enoki surfaces. An Enoki surface $X$ of generic type has $b$ irreducible curves  $D_1,\dots,D_b$, and these curves form a cycle of rational curves. More precisely   supposing $b>1$ each $D_i$ is smooth rational and,  with respect to a suitably ordered Donaldson  basis $(e_1,\dots,e_b)$, we have $c_1({\cal O}(D_i))=e_i-e_{i+1}$, the indices being considered modulo $b$.
 An Enoki surface of special type (also called a parabolic Inoue surface) has $b+1$ irreducible curves, namely   $b$ rational curves  forming a cycle of the same type as in the generic case, and a smooth elliptic curve $E$ with $c_1(E)=-\sum_i e_i$. \\

In our previous articles   \cite{Te2}--\cite{Te5} we developed a program  for proving the existence of curves on minimal class VII surfaces with $b_2>0$ using certain moduli spaces of polystable bundles on these surfaces: for a class VII surface $X$ we consider a differentiable rank 2 bundle $E$ on $X$  with $c_2=0$ and $\det(E)=K_X$, where $K_X$ denotes the underlying differentiable line bundle of the canonical line bundle ${\cal K}_X$. The fundamental object intervening 
in our program is the moduli space  
$${\cal M}^{\mathrm{pst}}:={\cal M}^{\mathrm{pst}}_{{\cal K}}(E)$$
 of polystable holomorphic structures on $E$ which induce the canonical holomorphic structure ${\cal K}_X$ on $K_X$. This moduli space is alway compact  but, for $b_2(X)>0$, it is not a complex space. It can be written as the union
$${\cal M}^{\mathrm{pst}}= {\cal R}\cup{\cal M}^{\mathrm{st}}\ ,
$$
where ${\cal R}$ is a finite union  of ``circles of reductions" (i.e., of split polystable bundles) and ${\cal M}^{\mathrm{st}}$ is the moduli space of stable  holomorphic structures on $E$ which induce the canonical holomorphic structure ${\cal K}_X$ on $K_X$. The latter subspace is open in  ${\cal M}^{\mathrm{pst}}$ and has a natural complex space structure.

An important role in the proofs of the main results in \cite{Te2}, \cite{Te4} (which concern the case $b_2(X)\in\{1,2\}$) and in the   program  developed for the general case, is played by reduced, irreducible, compact  complex subspaces $Y\subset {\cal M}^{\mathrm{st}}$   which contain the point corresponding to the isomorphy class $[{\cal A}]$ of the ``canonical extension" ${\cal A}$ of $X$ \cite{Te3}, \cite{Te5}. By definition ${\cal A}$ is the (essentially unique) non-trivial extension of the form 
\begin{equation}\label{A}
0\to {\cal K}_X\to {\cal A}\to {\cal O}_X\to 0\ . 
\end{equation}

This bundle is stable with respect to a suitable Gauduchon metric, unless $X$ belongs to  a very special class of Kato surfaces \cite{Te3}.

Supposing that  a classifying bundle for the embedding $Y\hookrightarrow  {\cal M}^{\mathrm{st}}$ exists, and choosing    a desingularization $B$ of $Y_{\rm red}$, we obtain:
\begin{itemize}
\item a compact complex manifold $B$,
\item a rank 2 bundle ${\cal E}$ on $B\times X$,
\item a point $a\in B$ such that the bundle ${\cal E}_a$ on $X$ is isomorphic to the canonical extension ${\cal A}$.
\end{itemize}

We denote by $\pi$, $p$ the projections of $B\times X$ on $B$ and $X$ respectively. For any  bundle ${\cal E}$ on $B\times X$ and for every line bundle ${\cal T}$ on $X$ we denote by $\TE$ the tensor product 
$$\TE:={\cal E}\otimes p^*({\cal T})\ .$$
 The goal of this section is  the following vanishing result:
 
  \begin{pr}
 Let $X$ be a minimal class VII surface with $b=b_2(X)>0$ and $J\subset\{1,\dots,b\}$ with $0<|J|<b$. Let ${\cal E}$  be a rank 2 bundle on $B\times X$, where $B$ is a connected  compact complex manifold such that
 \begin{enumerate}[(a)]
 \item $p_*({\cal E})=0$,
 \item There exists a point $a\in B$ such that ${\cal E}_a\simeq {\cal A}$.
 \end{enumerate}

Then 
\begin{enumerate}[(i)]
\item For any $[{\cal T}]\in\Pic^{-e_J}(X)$ except for at most one point of $\Pic^{-e_J}(X)$ one has  
$$ H^0(X, \Tors(R^1p_*(\TE)))=0\ ,$$
\item For any point $[{\cal T}]\in  \Pic^{-e_J}(X)$ with sufficiently negative degree\footnote{Recall that for a Gauduchon metric $g$ on a complex surface $X$ with $b_1(X)$ odd the degree map $\deg_g:\Pic(X)\to \R$ associated with $g$ is surjective on any connected component $\Pic^c(X)$ (\cite{LT} Prop. 1.3.13 p. 40). } one has
$H^1(\TE)=0$.

\end{enumerate}

\end{pr}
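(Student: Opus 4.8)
The plan is to reduce both statements to assertions about the single sheaf ${\cal F}:=R^1p_*({\cal E})$ on $X$ via the projection formula. Since $p^*({\cal T})$ is a line bundle, $R^1p_*(\TE)=R^1p_*({\cal E}\otimes p^*({\cal T}))={\cal F}\otimes{\cal T}$, whence $\Tors(R^1p_*(\TE))=\Tors({\cal F})\otimes{\cal T}$; and since $p_*({\cal E})=0$ forces $p_*(\TE)=p_*({\cal E})\otimes{\cal T}=0$, the Leray isomorphism recalled in the introduction (applied to $p:B\times X\to X$) gives $H^1(\TE)\cong H^0(X,{\cal F}\otimes{\cal T})$. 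Thus (i) becomes $H^0(X,\Tors({\cal F})\otimes{\cal T})=0$ and (ii) becomes $H^0(X,{\cal F}\otimes{\cal T})=0$. I would note at once that, because the fibrewise restriction $(\TE)|_{B\times\{x\}}\cong{\cal E}|_{B\times\{x\}}$ differs only by a one-dimensional factor, the Brill--Noether locus $\BN_p(\TE)=\BN_p({\cal E})$ and hence the support $D_p({\cal E})$ of $\Tors({\cal F})$ are independent of ${\cal T}$. In particular, if $X$ carries no curves then $D_p({\cal E})=\emptyset$, $\Tors({\cal F})=0$, and (i) is empty of content; the real case is when curves are present.

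For (i) I would invoke Theorem \ref{supp} together with Corollary \ref{main} (taking $X$ as base, $B\times X$ as total space, $p$ as morphism) to identify $H^0(X,\Tors({\cal F})\otimes{\cal T})$ with the inductive limit $\varinjlim_{|D|\subset\BN_p({\cal E})} H^0\bigl(B\times D,\,{\cal E}_{B\times D}(B\times D)\otimes p^*({\cal T})\bigr)$ over effective divisors $D$ supported on the curves of $D_p({\cal E})$. The first, and principal, task is then to identify these curves and the vertical restrictions ${\cal E}_{B\times C}$ of each irreducible component $C$. The only structural input available is the canonical extension $0\to{\cal K}_X\to{\cal A}\to{\cal O}_X\to 0$ realised by ${\cal E}_a$, combined with the classification of curves on minimal class VII surfaces; I expect the components of $D_p({\cal E})$ to be rational curves $D_i$ of the cycle, with $c_1({\cal O}(D_i))=e_i-e_{i+1}$, together with the elliptic curve $E$ (with $c_1(E)=-\sum_i e_i$) in the parabolic Inoue case.

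Granting this identification, the vanishing should be governed by intersection numbers against the Donaldson basis. For a rational component the restricted twist ${\cal T}|_{D_i}$ is determined by its degree $-e_J\cdot(e_i-e_{i+1})\in\{-1,0,1\}$, hence is the \emph{same} for every $[{\cal T}]\in\Pic^{-e_J}(X)$; the hypothesis $0<|J|<b$ makes the boundary of $J$ in the cyclic order non-trivial, and one checks that the resulting degrees force each rational contribution $H^0(B\times D_i,\cdots\otimes{\cal T}|_{D_i})$ to vanish. On the elliptic component, by contrast, $\Pic^0(E)\neq 0$, so ${\cal T}|_E$ genuinely varies with $[{\cal T}]$, and the corresponding $H^0$ can be non-zero for a single value of ${\cal T}|_E$ --- this is the source of the at most one exceptional point. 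The delicate step is to make this uniform over the inductive system, i.e.\ for divisors $D$ of arbitrarily high multiplicity along the components: here I would peel off one reduced component at a time using Lemma \ref{divisor} and the decomposition exact sequence (\ref{dec-ex-seq}), tensored by ${\cal E}(B\times D)$, and argue by induction on the total multiplicity.

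Finally, for (ii) I would combine (i) with a standard vanishing under a very negative twist. From $0\to\Tors({\cal F})\to{\cal F}\to\hat{\cal F}\to 0$ with $\hat{\cal F}$ torsion free, the embedding $\hat{\cal F}\hookrightarrow\hat{\cal F}^{\vee\vee}$ into the bidual (locally free, as $X$ is a smooth surface) reduces $H^0(\hat{\cal F}\otimes{\cal T})$ to sections of a vector bundle twisted by ${\cal T}$: such a section produces a saturated line subsheaf of non-negative $g$-degree, yet of degree at most $\mu_{\max}(\hat{\cal F}^{\vee\vee})+\deg_g({\cal T})$, which is negative once $\deg_g({\cal T})\ll 0$ --- achievable inside the fixed class $\Pic^{-e_J}(X)$ by the surjectivity of $\deg_g$ recalled in the footnote. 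Since the single exceptional point of (i) sits at one definite degree, it is avoided for $\deg_g({\cal T})$ sufficiently negative, so $H^0(\Tors({\cal F})\otimes{\cal T})=0$ there as well; the two vanishings give $H^1(\TE)=H^0(X,{\cal F}\otimes{\cal T})=0$. I expect the hard part of the whole argument to be the surface-geometry step of the second paragraph: pinning down $\BN_p({\cal E})$ and the vertical restrictions precisely enough to carry out the degree bookkeeping of the third.
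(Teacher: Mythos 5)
Your opening reductions (projection formula, Leray, $R^1p_*(\TE)\cong R^1p_*({\cal E})\otimes{\cal T}$) and your treatment of (ii) --- pass to the torsion-free quotient, embed in the locally free bidual, and kill sections by taking $\deg_g({\cal T})\ll 0$ --- agree with the paper. The genuine gap is in (i), at precisely the point you yourself flag as ``the principal task''. After Corollary \ref{main} reduces the problem to the vanishing of $H^0({\cal D},\TE_{\cal D}({\cal D}))$ for effective divisors $D$ of $X$ (with ${\cal D}=B\times D$), you propose to identify $\BN_p({\cal E})$ and the restrictions ${\cal E}_{B\times D_i}$, and then to conclude by bookkeeping of the degrees of ${\cal T}$ on the components. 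But the hypotheses give you no control whatsoever over ${\cal E}|_{B\times\{x\}}$ or ${\cal E}_{B\times C}$: the only structural input, ${\cal E}_a\simeq{\cal A}$, concerns a fiber of the \emph{other} projection $\pi$. A negative degree of ${\cal T}|_{D_i}$ does not force $H^0(B\times D_i,{\cal E}_{B\times D_i}(\cdots)\otimes p^*({\cal T}))=0$, because the unknown rank-2 factor ${\cal E}_{B\times D_i}$ can contribute sections. The missing idea is the paper's pivot: regard ${\cal D}=B\times D$ as fibered over $B$ via $\pi_{\cal D}$, observe that $\TE_{\cal D}({\cal D})$ is flat over $B$, and use Lemma \ref{tfapp} (semicontinuity plus torsion-freeness of direct images) to reduce the vanishing of $H^0$ over all of ${\cal D}$ to the vanishing of $H^0(D,{\cal T}\otimes{\cal A}_D(D))$ on the single fiber over $a$. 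This is where hypothesis (b) actually does its work, and it also makes any identification of $\BN_p({\cal E})$ unnecessary --- the paper simply proves the fiberwise vanishing for \emph{every} effective divisor $D$ of $X$.

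Your account of the ``at most one exceptional point'' is also off. It does not arise from the variation of ${\cal T}|_E$ on the elliptic curve of a parabolic Inoue surface. In the paper it comes from the restricted, twisted canonical extension $0\to{\cal T}\otimes{\cal K}_D(D)\to{\cal T}\otimes{\cal A}_D(D)\to{\cal T}_D(D)\to 0$: the quotient term vanishes for every $[{\cal T}]\in\Pic^{-e_J}(X)$ (Lemma \ref{new}, by induction on total multiplicity, using a component $D_\sigma$ with $\sigma\in\bar J$, $\sigma+1\in J$ on which ${\cal T}(D)$ has degree $-1$), while the sub term reduces via Riemann--Roch to $h^0({\cal K}\otimes{\cal T}(D))$, which can be nonzero only for the single class ${\cal T}\simeq{\cal K}^{-1}(A)$ with $c_1({\cal O}(A))=e_{\bar J}$ --- and for no ${\cal T}$ at all when $X$ is an Enoki surface. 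So the exception is governed by effectivity of ${\cal K}\otimes{\cal T}$ up to a divisor, not by the elliptic component; your plan would need to be substantially reorganized around these two points to go through.
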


Note that the condition $p_*({\cal E})=0$ holds automatically when $a(B)=0$ and the bundle ${\cal E}_z$  on $X$ is non-filtrable for generic $z\in B$ (see section 4.3 in \cite{Te6}). In \cite{Te6} we use this vanishing result to prove a non-existence theorem for positive dimensional reduced, irreducible compact complex subspaces $Y\subset  {\cal M}^{\rm st}$ containing the point $[{\cal A}]$ and an open neighborhood $Y_0$ of this point such that $Y_0\setminus\{[{\cal A}]\}$ consists only of non-filtrable bundles. The idea of the proof is to show that the vanishing of $H^1(\TE)$ leads to a contradiction, which gives  $p_*({\cal E})\ne 0$, and using the results of section 4.3 in \cite{Te6}, we obtain $a(B)>0$. Therefore $B$ (so also $Y$) is covered by divisors, so the    result follows by induction with respect to $\dim(Y)$.
This non-existence result is a generalization (for subspaces of arbitrary dimension) of  Corollary 5.3 \cite{Te2} which deals with the case $\dim(Y)=1$ and plays a crucial role in proving the existence of a curve on class VII surfaces with $b_2=1$.
 
 \begin{proof}
 
(i) Recall that we denote by ${\cal D}iv(X)$ the set of {\it effective} divisors of $X$.
 Using Corollary \ref{main} we see that in order to prove this proposition it suffices to show that for any effective divisor $D\in  {\cal D}iv(X)$ we have
 $$H^0(\TE_{{\cal D}}({\cal D}))=0\ ,
 $$
where   ${\cal D}:=B\times D$, regarded as an effective divisor of $B\times X$.  The projection $\pi_{\cal D}:=\resto{\pi}{{\cal D}}:{\cal D}\to B$ is a flat morphism and 
$$\TE_{{\cal D}}({\cal D})={\cal E}_{\cal D}\otimes p^*({\cal T}(D))$$
 is a locally free sheaf on ${\cal D}$, so is flat over $B$.     
 Regarding $\TE_{{\cal D}}({\cal D})$  as a locally free sheaf on ${\cal D}$ and using the projection $\pi_{\cal D}:{\cal D}\to B$ we will prove that 
 \\
 \\
 {\it Claim:} Under the assumptions of the theorem for any $[{\cal T}]\in\Pic^{-e_J}(X)$ except for at most one point of $\Pic^{-e_J}(X)$ it holds 
 $$H^0 (\TE_{{\cal D}}({\cal D})_a)=0\ \forall D\in{\cal D}iv(X)\ .$$  
  
By Lemma \ref{tfapp} and Remark \ref{rem}  $H^0 (\TE_{{\cal D}}({\cal D})_a)=0$ implies $(\pi_{\cal D})_*( \TE_{{\cal D}}({\cal D}))=0$, which obviously implies $H^0(\TE_{{\cal D}}({\cal D}))=0$. Therefore our claim completes  the proof. Since $\TE_{{\cal D}}({\cal D})_a={\cal T}\otimes {\cal A}_D(D)$ our claim follows from Lemma \ref{van} below.
\\ \\
(ii) The Leray spectral sequence associated with the projection $p$ yields the exact sequence
$$0\to H^1(p_*(\TE))\to H^1(\TE)\to H^0(R^1p_*(\TE))\to H^2(p_*(\TE))\to H^2(\TE) \ .
$$
By the projection formula we have $p_*(\TE)=p_*({\cal E})\otimes {\cal T}$, which vanishes by hypothesis. Therefore the canonical morphism $H^1(\TE)\to H^0(R^1p_*(\TE))$ is an isomorphism.  By (i) we know that $H^0(\Tors(R^1p_*(\TE)))=0$ for any $[{\cal T}]\in \Pic^{-e_J}(X)$ except for at most a point. It suffices to prove that for any $[{\cal T}]\in \Pic^{-e_J}(X)$ with sufficiently negative degree  one has $H^0(\TS)=0$, where $\TS$ denotes the quotient of $R^1p_*(\TE)$ by its torsion subsheaf. The projection formula gives a canonical isomorphism $R^1p_*(\TE)=R^1p_*({\cal E})\otimes {\cal T}$ hence, since ${\cal T}$ is locally free, we get a canonical isomorphism
$$\TS={\cal S}\otimes {\cal T}\ ,
$$
where ${\cal S}$ is the quotient of $R^1p_*({\cal E})$ by its torsion subsheaf. Let ${\cal S}\hookrightarrow {\cal B}$ be the embedding of ${\cal S}$ in its bidual, which is a reflexive sheaf on a surface, hence is locally free. Fix a Hermitian metric $h$ on ${\cal B}$, a Gauduchon metric $g$ on $X$, and let $h_{{\cal T}}$ be a Hermite-Einstein Hermitian metric on ${\cal T}$. When the Einstein constant $c_{\cal T}$ of the Chern connection $A_{{\cal T},h_{\cal T}}$ is sufficiently negative, the Hermitian endomorphism $i\Lambda_g F_{h\otimes h_{{\cal T}}}$ of ${\cal B}\otimes {\cal T}$ becomes negative definite, hence $H^0({\cal B}\otimes{\cal T})=0$ by a well-known vanishing theorem (see Theorem 1.9 p. 52 in \cite{K}). It suffices to recall that the Einstein constant of $A_{{\cal T},h_{\cal T}}$ is proportional with $\deg_g({\cal T})$ (see Proposition 2.18 in \cite{LT}) and to note that $H^0({\cal B}\otimes{\cal T})=0$ implies $H^0(\TS)=0$, because $\TS$ is a subsheaf of ${\cal B}\otimes{\cal T}$.

 \end{proof}
\begin{lm}\label{van}
Let $X$ be a minimal class VII surface with $b=b_2(X)>0$ and $J\subset\{1,\dots,b\}$ with $0<|J|<b$. 
 Then for any $[{\cal T}]\in\Pic^{-e_J}(X)$ except for at most one point of $\Pic^{-e_J}(X)$   it holds
 $$H^0({\cal T}\otimes{\cal A}_D(D))=0\ \forall D\in{\cal D}iv(X) .$$

\end{lm}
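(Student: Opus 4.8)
The plan is to split off the two line bundles hidden inside ${\cal A}_D(D)$, reduce each of the two resulting vanishings to the non-effectivity of the classes $-e_J$ and $e_{\bar J}$, and finally isolate the single exceptional ${\cal T}$ on the numerically trivial part of the curve configuration of $X$. The case $D=0$ is vacuous, so assume $D\neq 0$. Since $X$ is a minimal class VII surface it carries only finitely many curves, every effective divisor is a non-negative combination of these, and their classes lie in the negative definite lattice $H^2(X,\Z)/\Tors$, in which $c_1({\cal K}_X)=\sum_{i=1}^b e_i$. I would restrict the canonical extension (\ref{A}) to $D$ — the sequence stays exact because ${\cal O}_X$ is locally free — and tensor with the locally free sheaf ${\cal T}(D)$, obtaining on $D$ a short exact sequence
$$0\to {\cal L}_1\to {\cal T}\otimes{\cal A}_D(D)\to {\cal L}_2\to 0,\qquad {\cal L}_1:=({\cal K}_X\otimes{\cal T})(D)|_D,\quad {\cal L}_2:={\cal T}(D)|_D,$$
whose underlying first Chern classes on $X$ are $e_{\bar J}+[D]$ and $-e_J+[D]$. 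Since $H^0(D,{\cal L}_1)=0$ forces $H^0(D,{\cal T}\otimes{\cal A}_D(D))\hookrightarrow H^0(D,{\cal L}_2)$, it suffices to prove $H^0(D,{\cal L}_1)=0$ and $H^0(D,{\cal L}_2)=0$ for all but one $[{\cal T}]$.

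The engine is the observation that, because $0<|J|<b$, neither $-e_J$ nor $e_{\bar J}=c_1({\cal K}_X)-e_J$ is a non-negative combination of curve classes. In the Enoki case this is the explicit ``coordinate sum'' computation against the Donaldson basis: the rational cycle classes $e_i-e_{i+1}$ have coordinate sum $0$ and the elliptic class $-\sum_i e_i$ has coordinate sum $-b$, whereas $-e_J$ and $e_{\bar J}$ have coordinate sums $-|J|$ and $b-|J|$, both excluded for $0<|J|<b$; for a general minimal class VII surface the same non-effectivity follows from the structure of effective divisors established in the cited works. Consequently $-e_J+[D]$ and $e_{\bar J}+[D]$ are never effective, so $H^0(X,{\cal T}(D))=H^0(X,({\cal K}_X\otimes{\cal T})(D))=0$; the sequence $0\to{\cal T}\to{\cal T}(D)\to{\cal L}_2\to 0$ then identifies $H^0(D,{\cal L}_2)$ with $\ker\big(H^1({\cal T})\to H^1({\cal T}(D))\big)$, and likewise for ${\cal L}_1$, leaving only the generic vanishing of these kernels.

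To establish this I would restrict a hypothetical nonzero section componentwise, peeling off the reduced components of $D$ by the decomposition sequence (\ref{dec-ex-seq}): on every component on which the relevant twist of ${\cal L}_i$ has negative degree the section must vanish, so it is forced onto a sub-divisor $D_0\le D$ on which the bundle is numerically non-negative. Combining negative definiteness with the non-effectivity above, this $D_0$ must be numerically trivial; by the classification recalled earlier a non-zero numerically trivial effective divisor occurs only on an Enoki surface and is a multiple of the cycle of rational curves (the elliptic curve of a parabolic Inoue surface has $E^2=-b\neq 0$ and contributes only strict negativity). On that cycle the restriction of ${\cal L}_2$ (resp. ${\cal L}_1$) has degree $0$ on every component, hence admits a section only if it is holomorphically trivial; since the restriction $\Pic^0(X)\to\Pic^0(\text{cycle})$ is injective, this triviality singles out at most one $[{\cal T}]\in\Pic^{-e_J}(X)$, which is the exceptional point.

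The main obstacle is this penultimate step: controlling the degrees of all graded pieces ${\cal L}_i(-D')|_C$ uniformly over every sub-divisor $D'\le D$ and every effective $D$, and proving rigorously that a surviving section forces a \emph{numerically trivial} configuration rather than merely one of non-negative degree. This is where the precise intersection combinatorics of the curves and the classification of numerically trivial effective divisors on minimal class VII surfaces are indispensable; the injectivity of $\Pic^0(X)\to\Pic^0(\text{cycle})$ then converts ``numerically trivial and effective'' into the single excluded line bundle, while on all surfaces without such a configuration the vanishing holds for every $[{\cal T}]$.
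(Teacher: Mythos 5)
Your opening reduction coincides with the paper's: restrict the extension (\ref{A}) to $D$, twist by ${\cal T}(D)$, and reduce to the two vanishings $H^0({\cal T}_D(D))=0$ and $H^0(({\cal K}_X\otimes{\cal T})_D(D))=0$. The argument breaks at the sentence ``Consequently $-e_J+[D]$ and $e_{\bar J}+[D]$ are never effective.'' Non-effectivity of $e_{\bar J}$ does not imply non-effectivity of $e_{\bar J}+[D]$ as $D$ ranges over all effective divisors: if $e_{\bar J}=c_1({\cal O}(A_+-A_-))$ with $A_\pm$ effective and without common components (which the linear independence of the curve classes on a non-Enoki surface permits for exactly one divisor class), then for $D=A_-+H$ the class $e_{\bar J}+[D]=[A_++H]$ \emph{is} effective, and $H^0(({\cal K}_X\otimes{\cal T})(D))\neq 0$ for ${\cal T}\simeq{\cal K}_X^{-1}(A_+-A_-)$. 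This is precisely the source of the single exceptional point in the statement; if your deduction were valid, the lemma would hold with no exception at all, which is too strong. The paper instead converts $h^0(({\cal K}_X\otimes{\cal T})_D(D))$ into $h^0(({\cal K}_X\otimes{\cal T})(D))$ by checking $h^i({\cal T}\otimes{\cal K}_X)=0$ for all $i$ (using $\chi({\cal T}\otimes{\cal K}_X)=-\tfrac12 e_{\bar J}e_J=0$), and then analyzes exactly when $e_{\bar J}$ can be written as $c_1({\cal O}(G-D))$ with $G,D$ effective.

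The exceptional point is also located in the wrong place. On an Enoki surface with $D=nC$ a multiple of the cycle $C=\sum_i D_i$, the restriction of ${\cal T}(D)$ to the components is \emph{not} of degree $0$ on each $D_i$: since $[D]$ is numerically trivial one has $\langle c_1({\cal T}(D)),D_i\rangle=\langle -e_J,D_i\rangle$, which equals $-1$ on any component $D_\sigma$ with $\sigma\in\bar J$ and $\sigma+1\in J$ --- and such a component exists precisely because $0<|J|<b$. This degree $-1$ component drives the induction of Lemma \ref{new} (iv) and gives $H^0({\cal T}_D(D))=0$ for \emph{every} ${\cal T}$, and a coordinate-sum argument shows that no exception arises from $({\cal K}_X\otimes{\cal T})_D(D)$ on Enoki surfaces either. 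So the injectivity of $\Pic^0(X)\to\Pic^0(C)$ is not where the exceptional ${\cal T}$ comes from; moreover the mechanism you propose would a priori produce an excluded line bundle depending on $D$, whereas the lemma requires one ${\cal T}$ excluded uniformly over all $D\in{\cal D}iv(X)$, which the paper obtains from the uniqueness of the divisor class $A_+-A_-$ representing $e_{\bar J}$.
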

  
  \begin{proof}
  
(i)     Tensorizing with ${\cal T}(D)$  the exact sequence of {\it locally free} sheaves (\ref{A}) and  restricting it to $D$  we obtain the exact sequence 
\begin{equation}\label{AD}
0\to {\cal T}\otimes {\cal K}_D(D)\to {\cal T}\otimes {\cal A}_D(D)\to {\cal T}_D(D)\to 0 \ ,
\end{equation}
hence $H^0({\cal T}\otimes {\cal A}_D(D))=0$ as soon as $H^0({\cal T}_D(D))=0$ and $H^0({\cal T}\otimes {\cal K}_D(D))=0$. We will see that the first space vanishes for every $[{\cal T}]\in\Pic^{-e_J}(X)$, whereas the second vanishes for every $[{\cal T}]\in\Pic^{-e_J}(X)$ except for at most a point of $\Pic^{-e_J}(X)$ (which is independent of $D$).
\\ \\
1.   $H^0({\cal T}_D(D))=0$ for every $[{\cal T}]\in\Pic^{-e_J}(X)$.  
\\ \\
 This follows by Lemma \ref{new} (iii), (iv) below. \\ \\
2. $H^0({\cal T}\otimes {\cal K}_D(D))=0$ for every $[{\cal T}]\in\Pic^{-e_J}(X)$ except for at most a point of $\Pic^{-e_J}(X)$ (which is independent of $D\in{\cal D}iv(X)$).
\\

The proof of 2. starts with the cohomology long exact sequence associated with the short exact sequence
$$0\to {\cal T}\otimes {\cal K}\to {\cal T}\otimes {\cal K}(D)\to {\cal T}\otimes {\cal K}_D(D)\to 0\ ,
$$
which reads
\begin{equation}\label{lexs}
0\to H^0({\cal T}\otimes {\cal K})\to H^0({\cal T}\otimes {\cal K}(D))\to H^0({\cal T}\otimes {\cal K}_D(D))\to H^1({\cal T}\otimes {\cal K})\to \dots\ .
\end{equation}
Note that $c_1({\cal T}\otimes {\cal K})=e_{\bar J}$, hence $H^0({\cal T}\otimes {\cal K})=0$ by Lemma \ref{new} (i) below. On the other hand  $h^2({\cal T}\otimes {\cal K})=h^0({\cal T}^\vee)=0$, where the first equality follows by Serre duality, and the second by \ref{new} (i) again. Using the Riemann-Roch theorem and taking into account that
 $$\chi({\cal T}\otimes {\cal K})=\frac{1}{2}e_{\bar J}(e_{\bar J}-c_1({\cal K}))=-\frac{1}{2}e_{\bar J}e_J =0\ ,
 $$
 it follows $h^1({\cal T}\otimes {\cal K})=0$. Therefore, using the exact sequence (\ref{lexs})   we obtain

\begin{equation}\label{chain}
h^0({\cal T}\otimes {\cal K}_D(D))=h^0({\cal K}\otimes {\cal T}(D))\ .
\end{equation}
Therefore it suffices to prove that for any $[{\cal T}]\in\Pic^{-e_J}(X)$ except for at most one point of $\Pic^{-e_J}(X)$ one has $h^0({\cal K}\otimes {\cal T}(D))=0$ for any $D\in{\cal D}iv(X)$. 
If   $h^0({\cal K}\otimes {\cal T}(D))>0$, there  exists an effective divisor $G\subset X$ such that 
\begin{equation}\label{KT}
{\cal K}\otimes {\cal T}\simeq {\cal O}(G-D)\ .
\end{equation}
This implies 
\begin{equation}\label{congruence}
\sum_{i\in \bar J} e_i=c_1({\cal O}(G-D))\ .
\end{equation}

Suppose first that $X$ is an Enoki surface. Taking into account the properties of Enoki surfaces explained above, we see that for any divisor $A$ (effective or not) on an Enoki surface of generic type  one has $c_1(A)c_1({\cal K})=0$, whereas for any divisor $A$ (effective or not) on an Enoki surface of special type  one has $c_1(A)c_1({\cal K})\equiv 0$ mod $b$. Taking $A=G-D$ this contradicts formula (\ref{congruence}). So for an Enoki surface one has $H^0({\cal T}\otimes{\cal K}_D(D))=0$ for every $[{\cal T}]\in\Pic^{-e_J}(X)$.  

Suppose now that $X$ is not an Enoki surface. The classes corresponding  to the irreducible curves   of $X$ are linearly independent in the free $\Z$-module $H^2(X,\Z)/\Tors$. Therefore there exists at most one divisor $A=A_+-A_-$ (where $A_\pm$ are effective divisors with no common irreducible component) such that $\sum_{i\in \bar J} e_i=c_1({\cal O}(A))$. This shows that the pairs of effective divisors $(D,G)$ for which (\ref{congruence}) holds are all given by
$$D=A_-+H\ ,\  G=A_++H \hbox{ where } H\in{\cal D}iv(X)\ .
$$
But for any such pair one has ${\cal O}(G-D)\simeq{\cal O}(A)$, so ${\cal T}$ must be isomorphic  with  ${\cal K}^{-1}(A)$ by (\ref{KT}).
\end{proof}

\begin{lm}\label{new}
Let $X$ be a minimal class VII surface with $b:=b_2(X)>0$ and $J\subset\{1,\dots,b\}$.  Then 
\begin{enumerate}[(i)]
\item For every effective divisor $D\subset X$ one has 
$$\langle c_1({\cal K}),[D]\rangle \geq 0\ .$$
\item For every effective divisor $D\subset X$ one has 
$$\langle -e_J,D\rangle + D^2\leq 0
$$
with equality if and only if $D$ is numerically trivial.
\item If $X$ is not an Enoki surface, then  for every   $[{\cal T}]\in\Pic^{-e_J}(X)$ and for every effective divisor $D\subset X$  one has $H^0({\cal T}_D(D))=0$.
\item If $X$ is an Enoki surface and $0<|J|<b$, then  for every   $[{\cal T}]\in\Pic^{-e_J}(X)$ and for every effective divisor $D\subset X$  one has $H^0({\cal T}_D(D))=0$.
\end{enumerate}
\end{lm}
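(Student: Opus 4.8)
The plan is to handle the four assertions in order, working throughout in the Donaldson basis $(e_1,\dots,e_b)$: writing the class of an effective divisor $D$ as $[D]=\sum_i d_ie_i$ with $d_i\in\Z$, the relation $e_i\cdot e_j=-\delta_{ij}$ gives the convenient formulas $c_1({\cal K})\cdot D=-\sum_i d_i$, $-e_J\cdot D=\sum_{i\in J}d_i$ and $D^2=-\sum_i d_i^2$.

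For (i) I would reduce to an irreducible curve $C$, since $D$ is a non-negative combination of its components and the pairing is additive. If $C$ is numerically trivial then $c_1({\cal K})\cdot C=0$. Otherwise $C^2<0$ by negative definiteness, and adjunction gives $c_1({\cal K})\cdot C=2p_a(C)-2-C^2$. When $p_a(C)\ge 1$ this is $\ge 1$; when $p_a(C)=0$ the curve $C$ is smooth rational, and minimality of $X$ forbids $C^2=-1$, so $C^2\le -2$ and $c_1({\cal K})\cdot C=-2-C^2\ge 0$. Summing over components proves (i). For (ii) the formulas above give $-e_J\cdot D+D^2=\sum_{i\in J}d_i(1-d_i)-\sum_{i\notin J}d_i^2$, and every summand is $\le 0$ because $d(1-d)\le 0$ for every integer $d$; this is the inequality. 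For the equality case, vanishing of the sum forces $d_i=0$ for $i\notin J$ and $d_i\in\{0,1\}$ for $i\in J$, whence $\sum_i d_i\ge 0$; but (i) applied to the effective $D$ gives $\sum_i d_i\le 0$, so all $d_i$ vanish and $D$ is numerically trivial, the converse being obvious.

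For (iii) and (iv) I would set ${\cal N}:={\cal T}(D)$, so the goal is $H^0(D,{\cal N}|_D)=0$, and argue through a filtration of ${\cal O}_D$ by irreducible components. Writing $D$ as an \emph{ordered} sum $F_1+\dots+F_N$ of copies of its components and $D_m:=F_1+\dots+F_m$, the decomposition sequences (\ref{dec-ex-seq}) assemble into a filtration whose graded pieces are ${\cal N}(-D_{m-1})|_{F_m}$, a line bundle of degree $({\cal T}+E_m)\cdot F_m$ on the integral curve $F_m$, where $E_m:=D-D_{m-1}=F_m+\dots+F_N$. If each such degree is $<0$, then each graded piece has no sections (negative degree on an integral curve) and the long exact sequences force $H^0({\cal N}|_D)=0$ inductively. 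The crux is thus to \emph{choose the order} so that at each stage the current tail $E_m$ carries a component of strictly negative degree; since $({\cal T}+E_m)\cdot E_m=\sum_C(\mathrm{mult})\,({\cal T}+E_m)\cdot C$, it suffices that this number be $<0$, and then some component of $E_m$ works, so the choice can be made greedily.

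In case (iii), $X$ is not an Enoki surface, so by the quoted result \cite{E} there is no non-zero numerically trivial effective divisor; hence every non-zero effective tail $E_m$ satisfies $({\cal T}+E_m)\cdot E_m<0$ by (ii), and the greedy step always succeeds. In case (iv) the cycle $Z=\sum_i D_i$ is numerically trivial, and a direct computation with the known curve classes shows that every numerically trivial effective divisor is a multiple $kZ$; when the tail $E_m=kZ$ is numerically trivial, (ii) only gives $({\cal T}+E_m)\cdot E_m=0$ and no longer produces a negative component by itself. This is exactly where the hypothesis $0<|J|<b$ is needed: using $c_1({\cal O}(D_i))=e_i-e_{i+1}$ and $Z\cdot D_i=0$ one gets $({\cal T}+kZ)\cdot D_i=-e_J\cdot(e_i-e_{i+1})$, which equals $-1$ precisely when $i\notin J$ and $i+1\in J$, and such an index exists because $J$ is a non-empty proper subset of the cyclically ordered $\{1,\dots,b\}$. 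Thus a strictly negative component is available even for numerically trivial tails, and the filtration again yields $H^0({\cal N}|_D)=0$. I expect this Enoki subcase of (iv) to be the main obstacle, since it is where the numerical slack of (ii) collapses to an equality and the combinatorial role of $0<|J|<b$ must be exploited through the explicit cycle configuration.
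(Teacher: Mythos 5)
Your proof is correct and follows essentially the same route as the paper: the same Donaldson-basis computation for (ii), and for (iii)--(iv) your greedy filtration by components is exactly the paper's one-component-at-a-time induction via the decomposition sequences, with the identical use of the Enoki dichotomy from \cite{E} and of an index $\sigma\in\bar J$ with $\sigma+1\in J$ to handle numerically trivial tails. The only point where you go beyond the paper is (i), which the paper merely attributes to Nakamura; your adjunction-plus-minimality argument there is the standard one and is correct.
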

\begin{proof}

(i)  This  important inequality is due to Nakamura (see \cite{Na3} Lemma 1.1.3). It can be proved easily using the fact that, since $X$ is minimal, the claimed inequality holds for any irreducible curve of $X$.
\\ \\
(ii) Decomposing $c_1({\cal O}(D))=\sum_{i=1}^b d_i e_i$ mod $\Tors$ (with $d_i\in\Z)$ we have 
$$\langle -e_J,D\rangle + D^2= \sum_{j\in J} d_j-\sum_{i=1}^b d_i^2=-\bigg(\sum_{j\in J}d_j(d_j-1)+\sum_{i\in \bar J} d_i^2\bigg)\leq 0\ ,
$$
with equality if   and only if $d_j\in\{0,1\}$ for $j\in J$ and $d_i=0$ for $i\in\bar J$. Therefore one has equality if and only if  there exists $J'\subset J$ such that $c_1({\cal O}(D))=e_{J'}$. By Lemma \ref{new} (i) we must have $J'=\emptyset$, hence $D$ is numerically trivial.\\ \\
  Let $D=\sum_{s=1}^k n_s D_s$  (with $n_s>0$) be the decomposition of $D$ in irreducible components. We will prove (iii), (iv) by induction with respect to $n:=\sum_{s=1}^k n_s$. We have
\begin{equation}\label{ctd}\sum_{s=1}^k n_s \langle c_1({\cal T}(D)),D_s\rangle=\langle c_1({\cal T}(D)),D\rangle=\langle c_1({\cal T}),D\rangle+ D^2=-\langle e_J,D\rangle + D^2\ .
\end{equation}
{\ }\\
(iii)  If $X$ is not an  Enoki surface and $D$ is not empty, then (ii) and (\ref{ctd}) show  that 
$$\sum_{s=1}^k n_s \langle c_1({\cal T}(D)),D_s\rangle<0\ ,$$
hence there exists $\sigma\in\{1,\dots,k\}$ such that $\langle c_1({\cal T}(D)),D_\sigma\rangle<0$. Writing $D=D'+D_\sigma$ we obtain a short exact sequence
$$0\to{\cal T}_{D'}(D')\to {\cal T}_D(D)\to {\cal T}_{D_\sigma}(D)\to 0 $$
which gives the long exact sequence  
$$0\to H^0({\cal T}_{D'}(D'))\to H^0({\cal T}_{D}(D))\to H^0({\cal T}_{D_\sigma}(D))\to\dots\ .
$$
But $H^0({\cal T}_{D_\sigma}(D))=0$, because the degree of the restriction of the line bundle ${\cal T}(D)$ to the irreducible curve $D_\sigma$ is negative. Therefore $H^0({\cal T}_{D}(D))=H^0({\cal T}_{D'}(D'))$, and the sum  of the coefficients of the decomposition  of $D'$ in irreducible components is $n'=n-1$.
\\
\\
(iv) If $X$ is an Enoki surface, the proof of (iii) applies except when $D$ is numerically trivial, i.e, when $D=nC$ where $C$ is the cycle of rational curves of $X$. Write $C=\sum_{i=1}^b D_i$ where (for a suitably ordered Donaldson basis $(e_i)_{1\leq i\leq b}$) $c_1({\cal O}(D_i))=e_i-e_{i+1}$, the indices being considered mod $b$. In this case we get
$$\langle c_1({\cal T}(D)),D_i\rangle =\langle -e_J,D_i\rangle+\langle D,D_i\rangle=\langle -e_J,D_i\rangle\ .
$$
Since we assumed $J\ne\emptyset$ and $J\ne\{1,\dots,b\}$ there exists $\sigma\in \bar J$ such that $\sigma+1\in J$. Then $\langle c_1({\cal T}(D)),D_\sigma\rangle=-1$, which shows that $H^0({\cal T}_{D_\sigma}(D))=0$, hence the same argument by induction as in the proof of (iii) can also be used in the case when $D$ is numerically trivial.
\end{proof}

\section{Appendix: Torsion free direct images}
It is well known that ``under suitable assumptions" the direct image of a locally free sheaf  is torsion free. Since we need this result in a very general framework (when the total space is not supposed to be reduced)   we state this result explicitly   and give a complete proof.

\begin{lm} \label{tfapp} Let $M$, $B$  be complex spaces with $B$   locally irreducible\footnote{According to the terminology of \cite{GR}, a locally irreducible complex space is automatically reduced.}, $\pi:M\to B$ a proper holomorphic map  and ${\cal E}$ a  coherent sheaf on $M$ which is flat over $B$. Then 
\begin{enumerate}
\item $\pi_*({\cal E})$ is a torsion-free coherent sheaf on $B$.
\item  Suppose  $B$ is connected and there exists $x_0\in B$ such that $H^0({\cal E}_{x_0})=0$. Then $\pi_*({\cal E})=0$.
\end{enumerate}

\end{lm}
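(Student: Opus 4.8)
The plan is to treat the two assertions separately, deriving (1) directly from flatness and deducing (2) from (1) together with the semicontinuity theorem.

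For (1), I would fix a point $x\in B$ and a germ $\varphi\in{\cal O}_{B,x}\setminus\{0\}$, represented by a holomorphic function on an open neighborhood $U$ of $x$. Since $B$ is locally irreducible it is reduced and the local ring ${\cal O}_{B,x}$ is an integral domain, so $\varphi$ is a non-zero-divisor and the sequence $0\to{\cal O}_U\textmap{\varphi}{\cal O}_U$ is exact. Because ${\cal E}$ is flat over $B$, tensoring this injection by ${\cal E}$ over $\pi^{-1}(U)$ preserves exactness (this is exactly the mechanism already used in the proof of Theorem \ref{new-desc}), so multiplication by $\Phi:=\varphi\circ\pi$ is injective on $\resto{{\cal E}}{\pi^{-1}(U)}$. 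Applying the left-exact functor $\pi_*$ then shows that multiplication by $\varphi$ is injective on $\pi_*({\cal E})$ over $U$, hence on the stalk at $x$. As $x$ and $\varphi$ were arbitrary, $\pi_*({\cal E})$ has no nonzero torsion germ, which is the claim.

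For (2), I would first observe that a connected, locally irreducible complex space is irreducible: two distinct irreducible components cannot share a point without producing a reducible germ there, so the components are disjoint closed sets and connectedness leaves only one. Thus the torsion-free sheaf ${\cal F}:=\pi_*({\cal E})$ furnished by (1) has a well-defined generic rank $m\geq 0$, and it suffices to prove $m=0$: a torsion-free coherent sheaf of generic rank $0$ on an irreducible reduced space vanishes, since any nonzero germ would be supported on a proper analytic subset and hence be a torsion element. To identify $m$ I would invoke the semicontinuity theorem for the proper flat pair $(\pi,{\cal E})$ (\cite{BS}): the function $x\mapsto h^0({\cal E}_x)$ is upper semicontinuous, so it attains its minimum value $m$ on a dense Zariski-open set $\Omega\subset B$, on which it is constant. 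By Grauert's theorems ${\cal F}$ is then locally free on $\Omega$ with fibre $H^0({\cal E}_x)$, so the generic rank of ${\cal F}$ equals this minimum value $m$.

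Finally I would use the hypothesis: since $m$ is the minimum of $x\mapsto h^0({\cal E}_x)$ and $h^0({\cal E}_{x_0})=0$, we get $0\geq m\geq 0$, whence $m=0$ and ${\cal F}=\pi_*({\cal E})=0$. The main obstacle is the middle step of (2): converting the single-point vanishing $H^0({\cal E}_{x_0})=0$ into information about the generic rank of $\pi_*({\cal E})$. This rests on two inputs that must be combined carefully, namely that irreducibility of $B$ makes the minimum of $h^0$ coincide with its generic value, and that Grauert base change computes the fibre of $\pi_*({\cal E})$ precisely on the locus where $h^0$ is constant; once these are in place the remaining deductions are formal.
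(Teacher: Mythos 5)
Your proposal is correct and follows essentially the same route as the paper: part (1) rests on the fact that flatness of ${\cal E}$ over $B$ turns injectivity of multiplication by a nonzero germ $\varphi$ on ${\cal O}_B$ (available because the local rings are integral domains) into injectivity on ${\cal E}$ and hence, via left-exactness of $\pi_*$, on $\pi_*({\cal E})$, while part (2) combines upper semicontinuity of $x\mapsto h^0({\cal E}_x)$, Grauert's local-freeness/base-change theorem on the locus where this function is constant, irreducibility of $B$, and part (1). The only differences are presentational — the paper runs the flatness argument stalkwise on the fiber $M_x$ after identifying $\pi_*({\cal E})_x$ with $H^0(M_x,{\cal E})$, and in (2) it localizes near $x_0$ (where $h^0$ vanishes identically) rather than on the generic locus where $h^0$ attains its minimum — neither of which changes the substance.
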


\begin{proof}
(1) Let $x\in B$ and let $\sigma\in \pi_*({\cal E})_x$ be a torsion element of the ${\cal O}_{B,x}$-module $\pi_*({\cal E})_x$. Therefore there exists $\psi\in {\cal O}_{B,x}\setminus\{0\}$ such that $\psi\sigma=0$.  The stalk $\pi_*({\cal E})_x$ can be identified with the cohomology space $H^0(M_x,{\cal E})$ (see \cite{Go} Rem. 4.17.1 p. 202, \cite{BS} Lemma 1.3 p. 93), i.e., with the space of sections of the restriction  $\resto{{\cal E}}{M_x}$ in the sense of the theory of sheaves on topological spaces. Note that this restriction is {\it not} coherent on $M_x$. Using the natural ${\cal O}_{B}$-module structure of   ${\cal E}$, the condition $\psi\sigma=0$ becomes
\begin{equation}\label{sigma}
\psi \sigma_y=0 \hbox{ in } {\cal E}_y\  \forall y\in M_x\ .
\end{equation}

Since ${\cal O}_{B,x}$ is an integral domain by assumption (see \cite{GR} p. 8) the multiplication with $\psi$ defines a monomorphism of ${\cal O}_{B,x}$-modules 
$$0\to {\cal O}_{B,x}\textmap{\psi} {\cal O}_{B,x}\ ,
$$
hence, since ${\cal E}$ is flat over $B$, the induced morphism ${\cal E}_y\to {\cal E}_y$ is also injective for every $y\in M_x$. Therefore (\ref{sigma}) implies $\sigma=0$.

An alternative proof can be obtained using Corollary 4.11 p. 133 of \cite{BS} in the special case $\Mg={\cal O}_B$. It follows that there exists a coherent sheaf ${\cal T}$ on $B$ and an isomorphism $\pi_*({\cal E})=\Hom({\cal T},{\cal O}_B)$. It suffices to note that $\Hom({\cal T},{\cal O}_{B})$ is torsion free when the structure sheaf ${\cal O}_B$ is a sheaf of integral domains.
\\ \\
(2)  We apply Grauert's Theorem (Theorem 4.12  p. 134 [BS]).
It follows that the map $x\mapsto H^0({\cal E}_x)$ is upper semicontinuous  on $B$. Since we assumed that this function vanishes at $x_0\in B$, it follows that this function vanishes identically on a  open  neighborhood $U$ of $x_0$.  Since $B$ is reduced, the second part of the quoted theorem shows that then the sheaf $\pi_*({\cal E})$ is locally free on $U$. Therefore this sheaf vanishes on $U$ which shows that its support  is a proper analytic set of $B$  hence, since $B$ is locally irreducible,  it is a torsion sheaf (see Theorem 9.12  p. 60 \cite{Re}). On the other hand by (1) this sheaf is torsion free.  
\end{proof}

Note that in this lemma we don't have to assume  the total space or the fibers to be reduced. For instance we have
\begin{re}\label{rem}
   Suppose that ${\cal E}$ is locally free on $M$ and  $p:M\to Y$ is a flat morphism (for instance a holomorphically locally trivial morphism). Then ${\cal E}$ is flat over $Y$, hence (supposing that $Y$  is locally irreducible and $p$ is proper) Lemma \ref{tfapp} applies.
\end{re}


\begin{thebibliography}{BXDB}

 
  

 \bibitem[BHPV]{BHPV}  Barth, W;    Hulek, K.;  Peters, Ch.;   Van de Ven, A.:
{\it Compact complex surfaces}, Springer   (2004)

\bibitem[BS]{BS}   Bănică, C;  Stănăşilǎ, O.:  {\it Algebraic methods in the global theory of complex spaces} John Wiley \& Sons, London-New York-Sydney, 296 pages (1976).

\bibitem[Bi]{Bi} Bingener, J.: {\it Offenheit der Versalität in der analytischen Geometrie}, Math. Z. 173, 241-281 (1980).

\bibitem[Bou]{Bou}  Bourbaki, N.: {\it \'Eléments de mathématique}, Algèbre, Chapitre 10 Algèbre homologique, Springer (2007).

\bibitem[De]{De} Demailly, J. P.: {\it Complex Analytic and Differential Geometry}, \url{www-fourier.ujf-grenoble.fr/~demailly/manuscripts/agbook.pdf}.

\bibitem[D1]{D1} G. Dloussky: {\it Structure des surfaces de Kato}, Mém. Soc. Math. Fr., Tome 112 n° 14 (1984).
 
  
\bibitem[D2]{D2}   G. Dloussky: {\it From non-Kählerian surfaces to Cremona group of $\P^2(\C)$}, Complex Manifolds, Vol. 1, Issue 1, ISSN (Online) 2300-7443, DOI: {10.2478/coma-2014-0001}, (2014).

\bibitem[DOT]{DOT} Dloussky, G.; Oeljeklaus, K.; Toma, M.: {\it Class $\rm VII\sb 0$
surfaces with $b\sb 2$ curves},  Tohoku Math. J. (2) 55
 no. 2, 283-309  (2003).
 
 \bibitem[Dou]{Dou} Douady, A.: {\it Le problème des modules locaux pour les espaces $\C$-analytiques compacts}, Ann. Sci. \'Ecole Norm. Sup. Série 4 t. 7, 569-602 (1974).
 
 \bibitem[DV]{DV} Douady, A., Verdier, J.-L.: {\it  Séminaire de Géométrie analytique de l’Ecole Normale Supérieure}, 1971-72. Astérisque 16, Soc. Math. France (1974).
 
\bibitem[E]{E}   Enoki, I.: {\it Surfaces of class VII$_0$ with curves}, Tohoku Math. J. 33, 453–492 (1981).  

\bibitem[EL]{EL} Eisenbud, D.; Levine, H.: {\it An algebraic formula for the degree of a ${\cal C}^\infty$ map germ},  Annals of Mathematics, Vol. 106 19-44 (1977).
 
 \bibitem[Fa]{Fa} Fantechi, B: {\it Stacks for Everybody}, European Congress of Mathematics
Progress in Mathematics, Vol. 201,  349-359 (2001).

\bibitem[Fi]{Fi} Fischer, G.: {\it Complex Analytic Geometry}, Springer Verlag 538 (1976).

\bibitem[Fl]{Fl} Flenner, H. {\it Ein Kriterium für die Offenheit der Versalität}, Math. Z. 178, 449-473 (1981).

\bibitem[Go]{Go} Godement, R.: {\it Topologie algébrique et théorie des faisceaux}, Hermann (1973).

\bibitem[GR]{GR} Grauert, H. Remmert, R. : {\it Coherent Analytic Sheaves}, Spinger-Verlag (1984).

\bibitem[Ho]{Ho} N. Hoffmann: {\it The Moduli Stack of Vector Bundles on a Curve}, Teichmüller Theory and Moduli Problems,  I. Biswas, R. S. Kulkarni, S. Mitra (Eds.), Ramanujan Mathematical Society Lecture Notes Series,  pp. 387-394.Vol. 10 (2010).

 
\bibitem[KK]{KK}    Kaup, L.;   Kaup, B.: {\it  Holomorphic Functions of Several Variables. An Introduction to the Fundamental Theory}, de Gruyter (1983).
 
\bibitem[K]{K}   Kobayashi, S.: {\it Differential geometry
of complex vector bundles.},
Princeton Univ. Press (1987).

\bibitem[KS]{KS} Kodaira, K.; Spencer, D. C.: {\it A theorem of completeness for complex analytic fiber spaces}, Acta Mathematica Vol. 100, Issue 3-4, pp 281-294 (1958).


 

\bibitem[LT]{LT}   Lübke, M;   Teleman, A.: {\it The Kobayashi-Hitchin correspondence}, World Scientific Publishing  (1995).

\bibitem[Me]{Me}  L. Meersseman:  {\it The Teichmüller and Riemann Spaces as Analytic Stacks and Groupoids}, preprint arXiv:1311.4170 (2013).
  
\bibitem[Na1]{Na1} Nakamura, I.: {\it On surfaces of class $VII_0$ surfaces with
curves}, Invent. Math. 78, 393-443 (1984).

\bibitem[Na2]{Na2} Nakamura, I.: {\it Towards classification of non-K\"ahlerian
surfaces}, Sugaku Expositions vol. 2, No 2 , 209-229 (1989).


\bibitem[Na3]{Na3} Nakamura, I.: {\it On surfaces of class $VII_0$ surfaces with
curves II}, T\^ohuku Mathematical Journal vol 42, No 4, 475-516 (1990).


\bibitem[OT]{OT} Oeljeklaus, K.; Toma, M.: {\it Logarithmic Moduli Spaces for Surfaces of Class VII},  Math. Ann 341 (2), pp. 323-345 (2008).

\bibitem[Re]{Re} Remmert, R.: {\it Local Theory of Complex Spaces} in Several
Complex Variables VII, Sheaf-Theoretical Methods in Complex Analysis, H. Grauert, Th. Peternell, R. Remmert (Eds.), Encyclopaedia of Mathematical Sciences Vol. 74, Springer, (1994).

  
\bibitem[Te1]{Te1} Teleman, A.:  {\it Projectively flat surfaces and Bogomolov's
theorem on class $VII_0$ - surfaces},  Int. J.	 
Math., Vol.5, No 2, 253-264 (1994).

\bibitem[Te2]{Te2} Teleman, A.: {\it Donaldson theory on non-K\"ahlerian
surfaces and class VII surfaces with $b_2=1$}, Invent. math. 162, 493-521 (2005).

\bibitem[Te3]{Te3} Teleman, A.: {\it The pseudo-effective cone of a non-Kählerian surface and applications},  Math. Ann. Vol. 335, No 4, 965-989, (2006).

\bibitem[Te4]{Te4} Teleman, A.: {\it Instantons and holomorphic curves on class VII surfaces}, Annals of Mathematics 172,  1749-1804 (2010).

\bibitem[Te5]{Te5} Teleman, A.: {\it Gauge theoretical methods in the classification of non-Kählerian surfaces},
"Algebraic Topology - Old and New" (the Postnikov Memorial Volume), Banach Center Publications Vol. 85 (2009).
 
 \bibitem[Te6]{Te6} Teleman, A.: {\it A variation formula for the determinant line bundle. Compact subspaces of moduli spaces of stable bundles over class VII surfaces}, {\tt arXiv:1309.0350 [math.CV]}.
\end{thebibliography}
\end{document}